\theoremstyle{plain}
\newtheorem{lem}{Lemma}[section]
\newtheorem{prop}[lem]{Proposition}
\newtheorem{cor}[lem]{Corollary}
\newtheorem{thm}[lem]{Theorem}
\newtheorem{theorem-definition}[lem]{Theorem-Definition}
\newtheorem*{thm*}{Theorem}
\newtheorem*{prop*}{Proposition}
\newtheorem*{cor*}{Corollary}
\newtheorem*{lem*}{Lemma}
\theoremstyle{definition} 
\newtheorem{ex}[lem]{Example}
\newtheorem{nota}[lem]{Notation}
\newtheorem{rem}[lem]{Remark}
\newtheorem{defn}[lem]{Definition}
\DeclareMathOperator{\Aut}{\mathrm{Aut}}
\DeclareMathOperator{\Hom}{\mathrm{Hom}}
\DeclareMathOperator{\Spec}{\mathrm{Spec}}
\DeclareMathOperator{\Gal}{\mathrm{Gal}}
\DeclareMathOperator{\Sm}{\mathrm{Sm}}
\DeclareMathOperator{\Var}{\mathrm{Var}}
\DeclareMathOperator{\Sch}{\mathrm{Sch}}
\DeclareMathOperator{\Sets}{\mathrm{Sets}}
\DeclareMathOperator{\Id}{\mathrm{id}}
\DeclareMathOperator{\Gr}{\mathrm{Gr}}
\DeclareMathOperator{\Jac}{\mathrm{Jac}}
\DeclareMathOperator{\Ord}{\mathrm{ord}}
\DeclareMathOperator{\Spf}{\mathrm{Spf}}
\DeclareMathOperator{\Sym}{\mathrm{Sym}}
\DeclareMathOperator{\Pol}{{\![\![\!}}
\DeclareMathOperator{\Por}{{\!]\!]}}
\newcommand{\LL}{\ensuremath{\mathbb{L}}}
\newcommand{\N}{\ensuremath{\mathbb{N}}}
\newcommand{\A}{\ensuremath{\mathbb{A}}}
\newcommand{\Z}{\ensuremath{\mathbb{Z}}}
\title[Equivariant motivic integration on formal schemes]{Equivariant motivic integration on formal schemes and the motivic zeta function}
\author{Annabelle Hartmann}
\begin{document}

\begin{abstract}
For a formal scheme 
over a complete discrete valuation ring
with a good action of a finite group,
we define equivariant motivic integration, and we prove a change of variable formula for that.
To do so, we construct and examine an induced group action
on the Greenberg scheme of 
such a formal scheme.
Using this equivariant motivic integration,
we define an equivariant volume Poincar\'e series, from which we deduce Denef and Loeser's motivic zeta function including the
action of the profinite group of roots of unity.
\end{abstract}

\maketitle


\section{Introduction}
\label{introduction}

\noindent
Let $R$ be a complete discrete valuation ring of equicharacteristic zero with residue field $k$,
and let $X_\infty$ be a $sftf$ formal $R$-scheme, i.e.~a separated formal scheme which is  topologically of finite type over $R$.
Let $m$ be the relative dimension of $X_\infty$ over $R$,
and denote by $X_0$ its special fiber and by $X_\eta$ its generic fiber in the category of rigid varieties, which we assume to be smooth.
Let $\omega$ be a gauge form on $X_\eta$, i.e.~a global section of $\Omega^m_{X_\eta}$.
Under these assumptions
the volume Poincar\'e series $S(X_\infty,\omega;T)$ of
the pair $(X_\infty,\omega)$ was defined in \cite[Definition~7.2]{NiSe} by
\[
S(X,\omega;T):=\sum_{d>0}(\int_{X_\infty(d)}\hspace{-15pt}\lvert \omega(d)\rvert)T^d\in \mathcal{M}_{X_0}\Pol T \Por.
\]
Here $X_\infty(d):=X_\infty\times_R R(d)$, where $R(d)$ is a totally ramified extension of $R$ of degree $d$,
$\omega(d)$ is the pullback of $\omega$ to $X_\infty(d)$, and
\[
 \int_{X_\infty(d)}\hspace{-15pt}\lvert \omega(d)\rvert \in \mathcal{M}_{X_0}
\]
is the motivic integral of the gauge form $\omega(d)$ on $X_\infty(d)$, which was defined in \cite[Theorem-Definition~4.1.2]{motrigid}.
It takes values in the localization $\mathcal{M}_{X_0}$ with respect to the
class $\mathbb{L}$ of the affine line of the Grothendieck ring $K_0(\Var_{X_0})$ of varieties over $X_0$.
This ring is as group generated by classes $[V]$ of separated schemes $V$ of finite type over $X_0$, and
whenever $V$ is a closed subscheme of $W$, we ask $[W]$ to be equal to the sum of $[V]$ and $[V\setminus W]$;
the product is the fiber product over $X_0$.

Assume now that $X$ is a smooth irreducible algebraic variety of dimension $m+1$ over $k$,
let $f: X\to \mathbb{A}_k^1$ be a non-constant map, and assume that
$X_\infty$ is actually the completion of $X$ along $X_0:=f^{-1}(0)$.
Then using an explicit formula of the volume Poincar\'e series by means of an embedded resolution,
it was shown in \cite[Theorem 9.10]{NiSe} that
\begin{align}\label{main result}
 S(X_\infty,\frac{\omega}{df};T)=\mathbb{L}^{-m}Z(f,\mathbb{L}T)\in \mathcal{M}_{X_0}\Pol T \Por.
\end{align}
Here $Z(f,T)$ is Denef and Loeser's motivic zeta function, see \cite{DL3}.
It is given by
\[
 Z(f;T):=\sum_{d>0}[\mathcal{X}_{d,1}]\mathbb{L}^{(m+1)d}T^d\in \mathcal{M}_{X_0}[\![T]\!],
\]
where $\mathcal{X}_{d,1}$ is the subscheme of the $d$-th jet scheme of $X$ whose $k$-points are given by 
\[
 \{\psi:\text{Spec}(k[\![t]\!]/(t^{d+1}))\to X\mid f(\psi(t))=t^d \!\!\!\!\mod t^{d+1}\}.
\]
The motivic zeta function serves as a universal zeta function,
because it specializes to both the (twisted) topological zeta function and to Igusa's $p$-adic zeta function (with characters) for almost all $p$, see \cite[Section 2.3 and 2.4]{MR1618144}.
For all these zeta functions we can formulate a monodromy conjecture connecting the poles of the zeta function with the eigenvalues of the 
monodromy action on the Milnor fiber of $f$.
There is some evidence that these conjectures hold, but in general they are still open.
For more information on the different zeta functions and monodromy conjectures we refer to \cite{MR2647606}.
Apart from the connection with the other zeta function, the motivic zeta function also provides
fine invariants of hypersurface singularities, see for example \cite[Section~4.4]{DL3}.

Now observe that $\hat{\mu}$, the profinite group of roots of unity, acts, assuming that $k$ contains all roots of unity,
on $\mathcal{X}_{d,1}$
by multiplication with a primitive $d$-th root of unity.
Hence in fact we have
\[
Z(f,T)\in\mathcal{M}_{X_0}^{\hat{\mu}}\Pol T \Por,
\]
where $\mathcal{M}_{X_0}^{\hat{\mu}}$ is the localization with respect to the class $\mathbb{L}$ of the affine line
of the $\hat{\mu}$-equivariant Grothendieck ring $K_0^{\hat{\mu}}(\Var_{X_0})$ over $X_0$,
the profinite limit of the $\mu_d$-equivariant Grothendieck rings $K_0^{\mu_d}(\Var_{X_0})$.
Those rings are generated by classes $[V]$ of $X_0$-varieties $V$ with a good action
of the group of $d$-th roots of unity $\mu_d$.
Here an action on $V$ is called good if every orbit of the action lies in an affine subscheme of $V$.
We ask that $[V]+[W\setminus V]=[W]$ whenever $V\hookrightarrow W$
is a $\mu_d$-equivariant closed immersion,
and that the class of an affine bundles with affine $\mu_d$-action
only depends on its rank and base.
The product is given by the fiber product with induced $\mu_d$-action.

This means that using the volume Poincar\'e series, we do not recover
the motivic zeta function completely, but we lose the information of this group action,
which one needs in fact for the specialization to the topological and $p$-adic zeta function.
Moreover, this $\hat{\mu}$-action is closely related to monodromy, which is in particular very important with respect to the monodromy conjecture,
see \cite[Section 5.4]{MR2647606}.

The content of this paper is the construction of an equivariant version of the motivic Poincar\'e series
with values in $\mathcal{M}_{X_0}^{\hat{\mu}}\Pol T\Por$ instead of $\mathcal{M}_{X_0}\Pol T\Por$.
We also show that with this construction
we can recover the motivic zeta function including the $\hat{\mu}$-action,
i.e.~that Equation (\ref{main result}) actually holds in $\mathcal{M}_{X_0}^{\hat{\mu}}\Pol T \Por$.

\medskip
\noindent
In the first part of the paper, up to Section \ref{motivic integration},
we establish a theory of motivic integration of formal schemes
taking values in an equivariant Grothendieck ring.
To do so, we
fix a smooth $sftf$ formal scheme $X_\infty$ of relative dimension $m$
over a complete discrete valuation ring $R$ with perfect residue field $k$,
and a finite group $G$ with a good action on $X_\infty$,
which is compatible with a nice $G$-action on $R$,
i.e.~an action on $R$ with trivial induced action on $k$.

For motivic integration on formal schemes,
one measures subsets $A$ of the Greenberg scheme $\Gr(X_\infty)$ of $X_\infty$,
which replaces the arc space in the world of formal schemes, see Section \ref{greenberg definition}.
As the arc space comes along with $n$-th jet schemes,
there are $n$-th Greenberg scheme $\Gr_n(X_\infty)$ for all $n\in \mathbb{N}$, together with truncation maps $\theta_n:\Gr(X_\infty)\to\Gr_n(X_\infty)$
and $\theta_m^n:\Gr_n(X_\infty)\to\Gr_m(X_\infty)$ for $n\geq m$.
As $\Gr(X_\infty)$ is not of finite type, one uses the finite type schemes $\Gr_n(X_\infty)$ to define measures in the Grothendieck ring.

To get elements in an equivariant Grothendieck ring,
we need to deduce from the $G$-action on $X_\infty$ a good $G$-action
on $\Gr(X_\infty)$ and $\Gr_n(X_\infty)$ such that the truncation maps are $G$-invariant,
which we do in Section \ref{greenberg action}.
In particular we ask the action on $\Gr_0(X_\infty)=X_0$ to agree with the action induced by the given $G$-action on $X_\infty$.
Analogously to \cite[Proposition-Definition 3.6.1]{motrigid},
we then
define a $G$-stable cylinder $A$ of degree $n$
to be the inverse image of a G-invariant constructable subscheme $C$ of $\Gr_n(X_\infty)$,
and we set its measure to be 
\[
\mu^G(A):=[C]\mathbb{L}^{-nm}\in \mathcal{M}^G_{X_0},
\]
see Definition \ref{defn cylinder} and Definition \ref{defn measure}.
As a $G$-stable cylinder $A$ of degree $n$ is also a $G$-stable cylinder of degree $m$ for $m\geq n$,
we ask in addition that for $m\geq n$ the truncation map $\theta_{m+1}^m: \theta_{m+1}(A)\to \theta_m(A)$ is piecewisely a $G$-equivarinat affine bundle of rank $m$
with affine $G$-action,
which implies using the second relation in the equivariant Grothendieck ring that the measure of $A$ is well defined.
We show that this assumption is automatic in the case that $X_\infty$ is smooth:
already in the non-equivariant case, it was shown that 
$\Gr_{n+1}(X_\infty)$ is an affine bundle of rank $m$ over $\Gr_n(X_\infty)$.
We can show in addition that the action on this affine bundle
is affine over the action on the base, see Proposition \ref{prop structure map}.
To do so, we use a description of $\Gr_{n+1}(X_\infty)$ in terms of derivations over elements in $\Gr_n(X_\infty)$,
and an explicit $G$-action on these derivations.

Similarly to the non-equivariant case, we call a function
$\alpha: \Gr(X_\infty)\to \mathbb{Z}$ with finite image
naively $G$-integrable
if all fibers are $G$-stable cylinders, and set
\[
\int_{X_\infty}\hspace{-10pt} \mathbb{L}^{-\alpha}:=\sum_{i\in \mathbb{Z}}\mu^G(\alpha^{-1}(i))\mathbb{L}^{-i}\in\mathcal{M}^G_{X_0}.
\]
To be able to compute such an integral,
we need in particular a way to change variables.
Hence assume that we have another smooth formal $R$-scheme $Y_\infty$
with the same properties as $X_\infty$,
and a $G$-equivariant $R$-morphism
$h:Y_\infty\rightarrow X_\infty$
such that the map $Y_\eta\rightarrow X_\eta$ on the generic fibers
is an open immersion, and 
${Y_\eta (K')\to X_\eta(K')}$ is a bijection for all unramified extensions $K'$ of $K$,
where $K$ denotes the fraction field of $R$.
For this setup, we can show the following theorem:
\begin{thm*}[Change of variables formula,  Theorem \ref{changevar}]
Assume that $G$ is abelian and acts tamely on $R$, i.e.~$\lvert G\rvert$ is prime to the characteristic of the residue field $k$ of $R$,
and that $R$ has equal characteristic and $k$ contains all roots of unity. Then
\[
 \int_{X_\infty}\hspace{-10pt} \LL^{-\alpha}d\mu^G_{X_0} = \int_{Y_\infty}\hspace{-10pt} \LL^{-(\alpha\circ h + \Ord(\Jac_h))}d\mu^G_{X_0}\in\mathcal{M}^G_{X_0}. 
\]
 \end{thm*}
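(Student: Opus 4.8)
The plan is to run the change of variables argument of Denef and Loeser, in the formal-scheme incarnation of Nicaise--Sebag and \cite{motrigid}, while carrying the $G$-action through every step. In the non-equivariant case the whole computation rests on a ``fiber lemma'' describing the fibers of the induced map on Greenberg schemes in terms of the order of the Jacobian; the task here is to upgrade that lemma to a $G$-equivariant statement and then feed it into the defining relations of $\mathcal{M}^G_{X_0}$. Throughout I would use that, by functoriality of the Greenberg scheme (Section \ref{greenberg action}), the morphism $h$ induces a $G$-equivariant map $\Gr(h)\colon\Gr(Y_\infty)\to\Gr(X_\infty)$ compatible with the truncations.

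First I would stratify $\Gr(Y_\infty)$ by the order of the Jacobian. Since $h$ is $G$-equivariant and the $G$-action on $R$ is nice, the Jacobian ideal of $h$ is $G$-stable, so $\Ord(\Jac_h)\colon\Gr(Y_\infty)\to\N\cup\{\infty\}$ is a $G$-invariant function and each level set
\[
\Delta_e:=\{\,y\in\Gr(Y_\infty)\mid \Ord(\Jac_h)(y)=e\,\}
\]
is a $G$-stable cylinder. In particular $\alpha\circ h+\Ord(\Jac_h)$ is again naively $G$-integrable, so the right-hand side is defined. The locus $\{\Ord(\Jac_h)=\infty\}$ has measure zero, and the hypotheses that $Y_\eta\to X_\eta$ is an open immersion, bijective on unramified points, guarantee that $\Gr(h)$ is injective away from this locus and that the images $\Gr(h)(\Delta_e)$ cover $\Gr(X_\infty)$ up to measure zero. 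It therefore suffices to analyze $\Gr(h)$ on each $\Delta_e$.

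The key step is the equivariant fiber lemma: for every $e\geq 0$ and every $n\geq 2e$, the restriction of $\Gr_n(h)$ to $\theta_n(\Delta_e)$ is a $G$-equivariant piecewise trivial fibration onto its image whose fibers are affine spaces $\A^e_k$ with an affine $G$-action. To produce it I would use the description of $\Gr_{n+1}$ as a torsor of derivations over $\Gr_n$, together with the explicit $G$-action on those derivations from Proposition \ref{prop structure map}: the differential of $h$ identifies the fiber of $\Gr_n(h)$ through a level-$e$ point with a $k$-vector space of derivations of dimension $e$, and this identification is $G$-equivariant. The main obstacle is to verify that the induced $G$-action on the fiber $\A^e_k$ is affine in the sense required by the second relation in $K^{\hat\mu}_0(\Var_{X_0})$: the fiber is a torsor under the $k$-vector space of derivations, and although $G$ acts linearly on that space, one must present the whole fibration as an affine bundle with affine $G$-action. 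Here I would invoke tameness ($|G|$ prime to $\Char k$), the equal-characteristic hypothesis to put $R$ and its $G$-action in a standard form, and the presence of all roots of unity in $k$ in order to diagonalize the linear part and reduce to the affine-bundle relation; that $G$ is abelian makes the simultaneous diagonalization available.

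Granting the fiber lemma, the conclusion is bookkeeping in $\mathcal{M}^G_{X_0}$. The relation that an affine bundle of rank $e$ with affine $G$-action has class $\LL^e$ times that of its base gives, for the $G$-stable cylinder $\Gr(h)(\Delta_e)\subseteq\Gr(X_\infty)$,
\[
\mu^G\bigl(\Gr(h)(\Delta_e)\bigr)=\LL^{-e}\,\mu^G(\Delta_e).
\]
Since $\alpha\circ h$ is constant along the fibers of $\Gr(h)$ and $\Ord(\Jac_h)\equiv e$ on $\Delta_e$, decomposing $\int_{X_\infty}\LL^{-\alpha}$ over the images $\Gr(h)(\Delta_e)$ and pulling back replaces the exponent $\alpha$ by $\alpha\circ h$ and contributes exactly the factor $\LL^{-e}=\LL^{-\Ord(\Jac_h)}$ recorded above. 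Summing over $e$ and over the finitely many values of $\alpha$ then yields
\[
\int_{X_\infty}\hspace{-10pt}\LL^{-\alpha}\,d\mu^G_{X_0}=\int_{Y_\infty}\hspace{-10pt}\LL^{-(\alpha\circ h+\Ord(\Jac_h))}\,d\mu^G_{X_0}\in\mathcal{M}^G_{X_0},
\]
as claimed. The extra hypotheses enter only through the equivariant fiber lemma and the comparison of images; the remainder is the $G$-equivariant shadow of the non-equivariant change of variables formula.
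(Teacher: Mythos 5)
Your plan follows the same route as the paper: stratify $\Gr(Y_\infty)$ by $\Ord(\Jac_h)$ (which is constant on connected components since $h_\eta$ is an open immersion, so each level set is a $G$-stable cylinder of degree $0$), intersect with the fibers of $\alpha\circ h$, show the fibers of $\Gr_n(h)$ for $n\geq 2e$ are $e$-dimensional affine spaces with a suitably affine group action, apply the affine-bundle relation in $K_0^G(\Var_{X_0})$, and use the bijectivity of $\Gr(h)$ (no measure-zero argument is needed or indeed available for the naive measure; the hypothesis on unramified points makes $\Gr(h)$ an actual bijection, so the images $\Gr(h)(\Delta_e)$ partition $\Gr(X_\infty)$ exactly).

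The one place your plan is too optimistic is the statement of the ``equivariant fiber lemma.'' As written, it asserts that the fibers of $\Gr_n(h)$ over $\theta_n(\Delta_e)$ are affine spaces $\A^e_k$ \emph{with an affine $G$-action}; but $G$ permutes the fibers over an orbit in the base, so an individual fiber over a point $x_n$ only carries an action of the stabilizer $G_x$, not of $G$. This is exactly what the paper's Proposition \ref{structure h} proves (and where tameness, equal characteristic, roots of unity, and commutativity enter, via the explicit diagonal action on the ideal modules $\mathcal{J}^n_{n-e}$ and the existence of a $G$-fixed point in the fiber). Passing from this pointwise, stabilizer-equivariant statement to the identity $[\theta_n(h^{-1}(A_i)\cap J_e)]=[\theta_n(A_i\cap h(J_e))]\,\LL^e$ in $K_0^G(\Var_{X_0})$ is a separate, nontrivial globalization: one must spread the translation space out over a $G_\eta$-invariant neighborhood of each generic point, then assemble the pieces over a full $G$-orbit into a single $G$-equivariant affine bundle with affine action, and commutativity of $G$ is used again in checking that the glued torsor structure is equivariant. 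This is the content of the paper's Lemma \ref{lemma local global}, and your ``piecewise'' qualifier gestures at it without supplying the argument. With that lemma added, your outline coincides with the paper's proof.
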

\noindent
Here $\Ord(\Jac_h)$ is the order of the Jacobian, which measures the relative sheave of differentials of $h$,
see Definition~\ref{def ordjac}.
This theorem also holds in the non-equivariant case,
see \cite[Th\'eor\`eme 7.3.3]{sebag1}.

To show the change of variables formula, we need to
compare
$\Gr_n(Y_\infty)$ and $\Gr_n(X_\infty)$ in the equivariant Grothendieck ring.
Note that
$h$ induces a map $\Gr_n(h)$ between these two rings,
which we study in Section \ref{greenberg h}.
We can show that if $n$ is big enough,
the reduced subscheme of the inverse image under $\Gr_n(h)$ of every point $x_n$ in $\Gr_n(X_\infty)$
is an $G_x$-equivariant affine bundle of rank depending on the order of the Jacobian with affine $G_x$-action,
where $G_x$ denotes the stabilizer of $x_n$, see Proposition \ref{structure h}.
Using some spreading out argument in Lemma \ref{lemma local global},
 we can compute from this $\Gr_n(Y_\infty)$
in terms of $\Gr_n(X_\infty)$ in $\mathcal{M}_{X_0}^{G}$, which implies
the change of variables formula.

Note that while we can define $G$-integrable functions and describe the truncation map for general $R$ and $G$,
we can proof Proposition \ref{structure h} and hence the change of variables formula only in the case
that $R$ has equal characteristic and $G$ is abelian and acts tamely on $R$.
This is in particular due to the fact that we use a concrete description of the action on $R$, which we do not get in the non-abelian or wild case.
Moreover, in the case of mixed characteristic, one gets problems with non-separable extensions already in the non-equivariant case,
see \cite[Section 2.4]{MR2885338}.

\medskip
\noindent
Based on the developed theory of equivariant motivic integration,
 we generalize in the second part of this paper the definitions of the integral of a gauge form
and the volume Poincar\'e series, from which we finally deduce Denef and Loeser's motivic zeta function 
including the $\hat{\mu}$-action.

Let $R$  be a complete discrete valuation
ring of equal characteristic with residue field $k$ containing all roots of unity and a nice tame action of a finite abelian group $G$.
Let $X_\infty$ be a $sftf$ formal $R$-scheme with a good $G$-action compatible with the $G$-action on $R$, generically smooth
 but not necessarily smooth,
with a gauge form $\omega$ on its generic fiber $X_\eta$.
As in the non-equivariant case,
we associate a function $\Ord(\omega): \Gr(X_\infty)\to \mathbb{N}$ to
this gauge form, see Definition~\ref{dfn ordgauge}.

In order to integrate $\Ord(\omega)$,
we need a smooth scheme to integrate over.
Here we use, as in the non-equivariant case, a weak N\'eron model $U_\infty$ of $X_\eta$,
i.e.~$U_\infty$ is a smooth formal $sftf$ scheme,
whose generic fiber is an open rigid subspace of the generic fiber of $X_\infty$,
and the induced map $U_\infty(R')\to X_\infty(K')$ is a bijection for every unramified extension $R'$  of $R$ with quotient field $K'$,
see Definition \ref{def wnm}.
More precisely, we show in Theorem \ref{neronsmooth} that, under our assumptions, there exists always a $G$-equivariant
N\'eron smoothening $f:U_\infty \to X_\infty$ of $X_\infty$, meaning that $U_\infty$ is a weak N\'eron model of $X_\eta$ with an action of $G$,
and there is a $G$-equivariant isomorphism $h: X_\infty'\to X_\infty$ inducing an isomorphism on the generic fibers,
such that $f$ factors through an open $G$-equivariant immersion $U_\infty \hookrightarrow X_\infty'$.
Using such a smoothening $f:U_\infty \to X_\infty$, we define in Section \ref{integral of gauge}
\[
\int_{X_\infty}\hspace{-10pt}\lvert\omega\rvert:=\int_{U_\infty}\hspace{-10pt}\LL_{X_0}^{-\Ord(f^*\omega)}d\mu_{X_0}\in\mathcal{M}^G_{X_0}.
\]
As a weak N\'eron smoothening is not unique,
we need to show that this is well defined,
for which we use the change of variables formula, Theorem \ref{changevar}.

Using a G-equivariant N\'eron smoothening of $X_\infty$,
we also define the equivariant motivic Serre invariant of $X_\infty$ to be the class of the special fiber of such a weak N\'eron model
in $K_0^G(\Var_{X_0})/(\mathbb{L}-1)$,
see Section \ref{eq serre}.
This generalizes the Serre invariant, see
\cite[Definition 6.2]{NiSe2},
which is closely connected to the existence of rational points.
Some concrete applications of the motivic Serre invariant can be found for example in \cite{1009.1281}.

Now we can look at a $sftf$ formal $R$-scheme $X_\infty$, which is generically smooth.
We now assume that $R$
has equal characteristic zero.
Note that $\mu_d$, the group of $d$-th roots of unity,
acts on $R(d)$ and hence on $X_\infty(d)$.
Let $\hat{\mu}$ be again the profinite limit of the $\mu_d$,
hence we can define the equivariant volume Poincar\'e series by
\[
S(X,\omega;T):=\sum_{d>0}(\int_{X_\infty(d)}\hspace{-15pt}\lvert \omega(d)\rvert)T^d\in \mathcal{M}_{X_0}^{\hat{\mu}}\Pol T \Por,
\]
see Definition \ref{volume}.
Similarly, one can define the equivariant Serre Poincar\'e series
by summing over the equivariant Serre invariants of the $X_\infty(d)$.

To compute these series, we need a concrete $\mu_d$-equivariant N\'eron smoothening of $U_\infty(d)\to X_\infty(d)$ for all $d$.
The induced action on the special fiber of $U_\infty(d)$ agrees then with the action on $\Gr_0(U_\infty(d))$,
and can be used to compute the corresponding integral.
To get the desired smoothening we fix an embedded resolution of singularities $h: X_\infty'\to X_\infty$,
i.e.~a morphism of $sftf$ formal schemes inducing an isomorphism on the generic fibers,
such that $X_\infty'$ is regular,
and its special fiber is a simple normal crossing divisor $\sum_{i\in I} N_iE_i$.
Let $\widetilde{X_\infty'(d)}$ be the normalization of $X_\infty\times_R R(d)$ with induced $\mu_d$-action.
In Theorem~\ref{neron} we show that the induced map $U_\infty(d):=\Sm(\widetilde{X_\infty'(d)})\to X_\infty(d)$ is a $\mu_d$-equivarinat N\'eron smoothening
if $d$ is not $X_0'$-linear, see Definition \ref{defX0lin}.
Using this N\'eron smoothening, some local computations in Theorem \ref{neron}, and Lemma \ref{correction nonlin} to get rid of the $X_0'$-linearity,
we can show the following formula, which was shown in \cite{NiSe} without $\hat{\mu}$-action:
\begin{thm*}[Theorem \ref{explicit}]
\[
S(X_\infty,\omega;T)
=\LL^{-m}\hspace{-5pt}\sum_{\emptyset\neq J\subset I}\hspace{-5pt}(\LL-1)^{|J|-1}[\widetilde{E}_J^{o}]
\prod_{i\in J}\frac{\LL^{-\mu_i}T^{N_i}}{1-\LL^{-\mu_i}T^{N_i}}\in \mathcal{M}^{\hat{\mu}}_{X_0}\Pol T \Por.
\]
\end{thm*}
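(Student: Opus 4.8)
The plan is to compute the coefficient of $T^d$ in $S(X_\infty,\omega;T)$ for each $d>0$ by means of the concrete $\mu_d$-equivariant N\'eron smoothening furnished by Theorem~\ref{neron}, and then to recognise the resulting generating function, summed over $d$, as the claimed product of geometric series. Fix the embedded resolution $h:X_\infty'\to X_\infty$ with special fiber $\sum_{i\in I}N_iE_i$ a simple normal crossing divisor. For every $d$ that is not $X_0'$-linear, Theorem~\ref{neron} gives the $\mu_d$-equivariant N\'eron smoothening $U_\infty(d)=\Sm(\widetilde{X_\infty'(d)})\to X_\infty(d)$, so by the definition of the integral of a gauge form (Definition~\ref{volume}) we have
\[
\int_{X_\infty(d)}\hspace{-10pt}|\omega(d)|=\int_{U_\infty(d)}\hspace{-10pt}\LL^{-\Ord(f^*\omega(d))}d\mu_{X_0}\in\mathcal{M}^{\mu_d}_{X_0}.
\]
Since $U_\infty(d)$ is smooth, this integral is governed entirely by its special fiber $\Gr_0(U_\infty(d))$ together with the order function $\Ord(f^*\omega(d))$ of Definition~\ref{dfn ordgauge}, which is constant on each stratum lying over a fixed intersection pattern of the resolution.

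First I would stratify the special fiber $X_0'$ by the open strata $E_J^o=\bigl(\bigcap_{i\in J}E_i\bigr)\setminus\bigl(\bigcup_{i\notin J}E_i\bigr)$ for $\emptyset\neq J\subset I$, and analyse the smoothening $U_\infty(d)$ locally over each $E_J^o$. In local coordinates where $E_i=\{x_i=0\}$ and the uniformizer of $R$ equals a unit times $\prod_{i\in J}x_i^{N_i}$, base change to $R(d)$, normalization, and passage to the smooth locus is a toric computation: the piece of $U_\infty(d)$ over $E_J^o$ is, up to a factor $(\LL-1)^{|J|-1}$ coming from the torus directions transverse to the normalized chart, a cover of $E_J^o$ that is nonempty precisely when $d$ lies in the sub-semigroup generated by the $N_i$ with $i\in J$, i.e.\ when $d=\sum_{i\in J}k_iN_i$ for integers $k_i\geq 1$. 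On this locus the order of the pulled-back gauge form equals $\sum_{i\in J}k_i\mu_i$, the $G$-equivariant refinement of the order computation of \cite{NiSe}, controlled by Proposition~\ref{prop structure map} and the change of variables formula Theorem~\ref{changevar}. The genuinely new point is that the Galois action of $\mu_d$ on $R(d)$ propagates through the normalization to an action on this cover, which I claim is exactly the action defining the class $[\widetilde{E}_J^o]\in\mathcal{M}^{\hat{\mu}}_{X_0}$.

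Assembling the local pieces, the coefficient of $T^d$ becomes
\[
\LL^{-m}\sum_{\emptyset\neq J\subset I}(\LL-1)^{|J|-1}[\widetilde{E}_J^o]\sum_{\substack{(k_i)_{i\in J},\,k_i\geq 1\\\sum_{i\in J}k_iN_i=d}}\LL^{-\sum_{i\in J}k_i\mu_i},
\]
and summing over all $d>0$ converts the inner lattice-point sum into the product $\prod_{i\in J}\LL^{-\mu_i}T^{N_i}/(1-\LL^{-\mu_i}T^{N_i})$, which is the asserted identity. It remains to treat the finitely many residue classes of $d$ that are $X_0'$-linear, for which Theorem~\ref{neron} does not directly produce the smoothening; here I would invoke Lemma~\ref{correction nonlin} to show that comparing smoothenings after a suitable modification yields the same contribution, so that the closed formula holds for all $d$ at once.

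The main obstacle is the equivariant bookkeeping in the local computation. I must verify that the $\mu_d$-action induced on $\Sm(\widetilde{X_\infty'(d)})$ by the Galois action on $R(d)$ agrees, stratum by stratum, with the monodromy action defining the covers $\widetilde{E}_J^o$, and that these actions are compatible as $d$ varies so as to assemble into a genuine element of $\mathcal{M}^{\hat{\mu}}_{X_0}\Pol T\Por$. The underlying order and Jacobian estimates are the $G$-equivariant analogues of those in \cite{NiSe}; the delicate part is tracking the $\mu_d$-equivariant structure through normalization and identifying it with $[\widetilde{E}_J^o]$, and then reconciling the $X_0'$-linear classes of $d$ via Lemma~\ref{correction nonlin} without disturbing the equivariant structure.
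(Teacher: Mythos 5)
Your overall architecture is the paper's: compute $F(X_\infty,\omega;d)$ for $d$ not $X_0'$-linear via the explicit $\mu_d$-equivariant N\'eron smoothening $\Sm(\widetilde{X_\infty'(d)})\to X_\infty(d)$ of Theorem \ref{neron}, dispose of the $X_0'$-linear $d$ via Lemma \ref{correction nonlin}, and sum the resulting geometric series. But your central local computation misdescribes the model in a way that matters. Theorem \ref{neron} says the special fiber of $\Sm(\widetilde{X_\infty'(d)})$ is $\bigsqcup_{N_i\mid d}\widetilde{E}(d)^o_i$: it is concentrated entirely over the open strata $E_i^o$ with $|J|=1$, and there is no ``piece of $U_\infty(d)$ over $E_J^o$'' for $|J|\geq 2$, hence no $(\LL-1)^{|J|-1}$ factor from transverse torus directions on this model. (You are importing the Denef--Loeser arc-space stratification by contact orders, which does produce such factors, into a weak-N\'eron-model computation where it has no counterpart.) For $d$ not $X_0'$-linear the inner sum $\sum_{k_i\geq 1,\ \sum_{i\in J} k_iN_i=d}$ is empty for every $J$ with $|J|\geq 2$ and $E_J^o\neq\emptyset$ --- that is exactly Definition \ref{defX0lin} --- so the coefficient of $T^d$ is just $\LL^{-m}\sum_{N_i\mid d}[\widetilde{E}_i^o]\LL^{-\mu_i d/N_i}$, with the order $\mu_i d/N_i$ supplied by \cite[Lemma 6.3]{NiSe} and the equivariant identification $\widetilde{E}(d)_i^o\cong\widetilde{E}_i^o$ by Theorem \ref{neron} itself (not by Proposition \ref{prop structure map} or Theorem \ref{changevar}); the full sum over $J$ is then a purely formal rewriting. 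The deep-stratum terms only become visible after further blow-ups of strata $E_J$ (\cite[Lemma 5.17]{NiSe2}), and the substantive step is that Lemma \ref{correction nonlin} makes the right-hand side invariant under such blow-ups, equivariantly.

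Relatedly, the $X_0'$-linear $d$ are not ``finitely many residue classes'': if two components with $N_i=N_j=1$ meet, every $d\geq 2$ is $X_0'$-linear, so the blow-up invariance is the main body of the argument rather than a boundary correction. Once you (i) replace the stratum-by-stratum toric count on $\Sm(\widetilde{X_\infty'(d)})$ by the statement of Theorem \ref{neron} plus the trivial rewriting, and (ii) promote Lemma \ref{correction nonlin} from an afterthought to the mechanism that produces the $(\LL-1)^{|J|-1}[\widetilde{E}_J^o]$ terms, your outline coincides with the proof in the paper (which otherwise follows \cite[Theorem 7.6 and Corollary 7.7]{NiSe} verbatim, checking equivariance at each step).
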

\noindent
Here we use the following notation:
for any subset $J\subset I$,
${E_J^o:=\bigcap _{j\in J}E_j\!\setminus\! \bigcup_{i\in I \setminus J} E_i}$, and
$m_J:=\text{gcd}\{N_i\mid i\in J\}$.
For each non-empty subset $J\subset I$, we
can cover $E_J^o\subset X_\infty$  by finitely many affine open
formal subschemes $U_\infty=\mathrm{Spf}(V)$ of $X_\infty$, such
that on $U_\infty$, $t=u\prod _{i\in J}x_i^{N_i}$, with $t$ a uniformizing parameter of $R$ and $u$ a unit in $V$, and the $x_i$ are local coordinates.
 The
restrictions over $E_J^o$ of the \'etale covers
$U_\infty':=\mathrm{Spf}(V\{T\}/(uT^{m_J}-1))$
of $U_\infty$
glue together to an \'etale cover $\widetilde{E}_J^o$ of $E_J^o$, on which
$\hat{\mu}$ acts 
by multiplying $T$ with a $m_J$-th root of unity on every chart.

\medskip
\noindent
Using a similar formula for Denef and Loeser's motivic zeta function $Z(f;T)$,
we can deduce from this formula the following theorem:
\begin{thm*}[Theorem \ref{comparzeta}]
Let $X$ be a smooth irreducible variety of dimension $m+1$ over a field $k$ containing all roots of unity,
let $f:X\rightarrow \A^1_k$ be a
 dominant morphism,
 and let $X_\infty$ be the completion of $X$ along the special fiber $X_0:=f^{-1}(0)$.
 Assume that there exists a global gauge form $\omega$ on the generic fiber of $X_\infty$. Then
\[
 S(X_\infty, \frac{\omega}{df};T)=\LL^{-m}Z(f;\LL T)\in\mathcal{M}^{\hat{\mu}}_{X_0}\Pol T\Por.
\]
\end{thm*}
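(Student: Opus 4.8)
The plan is to reduce the identity to a term-by-term comparison of two closed-form expressions attached to one common embedded resolution, and then to check that the agreement holds at the level of $\hat{\mu}$-equivariant classes and not merely of underlying classes. For the left-hand side I would fix an embedded resolution $h\colon X_\infty'\to X_\infty$ with special fiber $\sum_{i\in I}N_iE_i$ and apply Theorem \ref{explicit} verbatim, giving
\[
S\Bigl(X_\infty,\tfrac{\omega}{df};T\Bigr)=\LL^{-m}\sum_{\emptyset\neq J\subset I}(\LL-1)^{|J|-1}[\widetilde{E}_J^o]\prod_{i\in J}\frac{\LL^{-\mu_i}T^{N_i}}{1-\LL^{-\mu_i}T^{N_i}},
\]
where $\mu_i$ is the value of $\Ord(\omega/df)$ read off along $E_i$ and the cover $\widetilde{E}_J^o$ carries the $\hat{\mu}$-action of Theorem \ref{explicit}.

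For the right-hand side I would invoke Denef and Loeser's resolution formula for $Z(f;T)$ from \cite{DL3}, computed with the same resolution $h$. Writing $\nu_i-1$ for the multiplicity of $E_i$ in the pullback divisor of the top form (equivalently in the relative canonical divisor), it has the shape
\[
Z(f;T)=\sum_{\emptyset\neq J\subset I}(\LL-1)^{|J|-1}[\widetilde{E}_J^o]\prod_{i\in J}\frac{\LL^{-\nu_i}T^{N_i}}{1-\LL^{-\nu_i}T^{N_i}}\in\mathcal{M}^{\hat{\mu}}_{X_0}\Pol T \Por.
\]
The crucial observation is that the covers occurring here are built by exactly the recipe of Theorem \ref{explicit}: locally $f\circ h=u\prod_{i\in J}x_i^{N_i}$, and $\widetilde{E}_J^o$ is the restriction over $E_J^o$ of the \'etale cover cut out by $uz^{m_J}=1$, with $\hat{\mu}$ acting by multiplying $z$ by an $m_J$-th root of unity. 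Thus the underlying covers and their $\hat{\mu}$-actions are literally the same object in both formulas.

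With both formulas available, the conclusion is a formal manipulation. Substituting $T\mapsto\LL T$ replaces each $\LL^{-\nu_i}T^{N_i}$ by $\LL^{N_i-\nu_i}T^{N_i}$, so matching the two geometric-series factors amounts to the single numerical identity $\mu_i=\nu_i-N_i$. This is precisely the local order computation
\[
\Ord_{E_i}(h^*(\omega/df))=\Ord_{E_i}(h^*\omega)-\Ord_{E_i}(d(f\circ h))=(\nu_i-1)-(N_i-1),
\]
using that $f\circ h=u\prod_i x_i^{N_i}$ forces $\Ord_{E_i}(d(f\circ h))=N_i-1$; this is the same bookkeeping as in the non-equivariant comparison of \cite{NiSe}. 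Carrying the common prefactor $\LL^{-m}$ through then yields $S(X_\infty,\omega/df;T)=\LL^{-m}Z(f;\LL T)$ term by term.

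The genuine difficulty is not this numerical matching, which is inherited from the non-equivariant setting, but the verification that the two $\hat{\mu}$-equivariant structures coincide. On the left the action is geometric in origin: it descends from the $\mu_d$-action on $R(d)$, propagated through the normalization $\widetilde{X_\infty'(d)}$ and the $\mu_d$-equivariant N\'eron smoothening furnished by Theorem \ref{neron}. On the right it is of monodromy type, coming from the $\hat{\mu}$-action on the jet schemes $\mathcal{X}_{d,1}$ by multiplication with a primitive $d$-th root of unity. These are a priori unrelated, and the heart of the proof is to trace both to the same chart-wise action on the charts $\mathrm{Spf}(V\{z\}/(uz^{m_J}-1))$, namely multiplication of $z$ by an $m_J$-th root of unity. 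I expect this local compatibility check to be the main obstacle; once it is established on each chart, gluing over $E_J^o$ and assembling the global comparison are routine.
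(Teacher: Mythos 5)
Your proposal is correct and follows essentially the same route as the paper: fix one embedded resolution, apply Theorem \ref{explicit} to $\omega/df$, invoke the Denef--Loeser resolution formula (Formula (\ref{formula mzf})) for $Z(f;T)$, and match the factors via $\Ord_{E_i}(h^*\omega/d(f\circ h))=\xi_i-N_i$ (the paper cites \cite[Lemma 9.6]{NiSe} for this) together with the substitution $T\mapsto\LL T$. The compatibility of the two $\hat{\mu}$-actions on $\widetilde{E}_J^o$, which you flag as the main obstacle, is exactly what the paper disposes of in Remark \ref{tildee dl} by observing that both constructions produce literally the same chart-wise cover with the same action, so your assessment of where the work lies is accurate even if the paper treats that step as routine.
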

\noindent
Hence we finally recover the motivic zeta function with $\hat{\mu}$-action from the equivariant volume Poincar\'e series.
This implies in particular that if we want to show something about the motivic zeta function, for example the motivic monodromy conjecture,
we can also prove it for the equivariant volume Poincar\'e series.

Without $\hat{\mu}$-action,
the corresponding monodromy conjecture for the volume Poincar\'e series was proven in the case of Abelian varieties,
see \cite[Theorem~8.5]{MR2782205}.
As already remarked in \cite[Section 2.5
]{MR2782205},
the non-equivariant version of the conjecture
does not imply Denef and Loeser's conjecture completely,
because one still misses the $\hat{\mu}$-action.
Hence it would be very nice to generalize their proof to the equivariant volume Poincar\'e series.

\medskip
\noindent
Finally, in Section \ref{rec mnc}, we can also recover from the equivariant volume Poincar\'e series the motivic nearby cycles $\mathcal{S}_f$, 
which are defined by formally taking the
limit of $-Z(f,T)$ for $T$ to $\infty$.
This invariant was defined in \cite{DL3}
and investigated further for example in \cite{MR2106970}.
Here we do not need to assume the existence of a global gauge form on $X_\infty$,
see Definition \ref{eq mot vol2}.
In fact we can also define an equivariant motivic volume $\mathcal{S}_{X_\infty}$ for all formal $k\Pol t\Por$-schemes $X_\infty$,
which agrees with $\mathcal{S}_f$ in the case that $X_\infty $ comes from a map $f: X\to \mathbb{A}^1_k$.
Using Theorem \ref{changevar} we get a formula for $\mathcal{S}_{X_\infty}$ in terms of an embedded resolution of $X_\infty$,
from which, together with
a result from \cite{abi2} on the existence of a quotient map on the equivariant Grothendieck ring of varieties,
the following corollary follows:
\begin{cor*}[Corollary \ref{special fiber modulo l}]
 Let $X_\infty$ be a $sftf$ formal scheme of relative dimension $m$ over $R$
 with smooth generic fiber.
Then the class of 
$ X_0'$ modulo $\mathbb{L}$ in $\mathcal{M}_{X_0}$ does not depend on the choice of 
an embedded resolution $h:X_\infty'\to X_\infty$.
\end{cor*}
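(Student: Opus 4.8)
The plan is to play the intrinsic definition of the equivariant motivic volume $\mathcal{S}_{X_\infty}$ off against its expression through a resolution, and then to strip away both the $\hat\mu$-action and the higher powers of $\mathbb{L}$ by means of the quotient map of \cite{abi2}. Recall that $\mathcal{S}_{X_\infty}$ is defined in Definition \ref{eq mot vol2} purely in terms of $X_\infty$, so it is manifestly independent of any choice of embedded resolution. On the other hand, given an embedded resolution $h:X_\infty'\to X_\infty$ with special fiber $\sum_{i\in I}N_iE_i$, the formula for $\mathcal{S}_{X_\infty}$ obtained via the change of variables formula (Theorem \ref{changevar}), which is the $T\to\infty$ specialization of the series in Theorem \ref{explicit}, reads
\[
\mathcal{S}_{X_\infty}=\sum_{\emptyset\neq J\subset I}(1-\mathbb{L})^{|J|-1}[\widetilde{E}_J^o]\in K_0^{\hat\mu}(\Var_{X_0}),
\]
where, crucially, the right-hand side involves no negative powers of $\mathbb{L}$.

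Next I would invoke the quotient map of \cite{abi2}: a $\mathbb{Z}[\mathbb{L}]$-linear homomorphism $q$ on the equivariant Grothendieck ring sending the class of a variety with good $\mu_d$-action to the class of its quotient, and in particular fixing $\mathbb{L}=q([\mathbb{A}^1])$. The cover $\widetilde{E}_J^o\to E_J^o$ described after Theorem \ref{explicit} is a Galois cover with group $\mu_{m_J}$: on each chart $\mathrm{Spf}(V\{T\}/(uT^{m_J}-1))$ the invariant ring under $T\mapsto\zeta T$ is generated by $T^{m_J}=u^{-1}\in V$, so the quotient is $\mathrm{Spf}(V)$ restricted over $E_J^o$. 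Hence $q([\widetilde{E}_J^o])=[E_J^o]$. Applying $q$ to the formula above and using its $\mathbb{Z}[\mathbb{L}]$-linearity gives
\[
q(\mathcal{S}_{X_\infty})=\sum_{\emptyset\neq J\subset I}(1-\mathbb{L})^{|J|-1}[E_J^o]\in K_0(\Var_{X_0}).
\]

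Finally I would reduce modulo $\mathbb{L}$. Since the last expression carries no negative powers of $\mathbb{L}$, its image in $K_0(\Var_{X_0})/(\mathbb{L})$ is meaningful, and there $(1-\mathbb{L})^{|J|-1}\equiv 1$ for every $J$. As the locally closed strata $E_J^o$ partition the reduced special fiber $\bigcup_{i\in I}E_i$, we have $\sum_{\emptyset\neq J}[E_J^o]=[X_0']$, and therefore $q(\mathcal{S}_{X_\infty})\equiv[X_0']\pmod{\mathbb{L}}$. Because the left-hand side depends only on $X_\infty$ through the intrinsically defined $\mathcal{S}_{X_\infty}$, the class of $X_0'$ modulo $\mathbb{L}$ is independent of the chosen embedded resolution, which is the assertion.

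I expect the genuinely delicate points to be bookkeeping rather than geometry. One must ensure that the resolution formula for $\mathcal{S}_{X_\infty}$ carries no stray factor $\mathbb{L}^{\pm a}$, as such a factor would obstruct the reduction modulo $\mathbb{L}$; the limit computation confirms the normalization is $\mathbb{L}^0$. The other point requiring care is the precise behaviour of the map $q$ of \cite{abi2} — that it is additive, $\mathbb{Z}[\mathbb{L}]$-linear with $q(\mathbb{L})=\mathbb{L}$, and genuinely computes the quotient on the Galois covers $\widetilde{E}_J^o$ — since it is through $q$ alone that both the $\hat\mu$-action is forgotten and the passage from $\widetilde{E}_J^o$ back to $E_J^o$ is effected. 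Everything else reduces to the already-established formula for $\mathcal{S}_{X_\infty}$ and the stratification of a simple normal crossing divisor.
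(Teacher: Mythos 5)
Your proof follows essentially the same route as the paper's: play the intrinsic definition of $\mathcal{S}_{X_\infty}$ (Definition \ref{eq mot vol2}) against its expression through a resolution, pass through the quotient map of \cite{abi2} using $q([\widetilde{E}_J^o])=[E_J^o]$, and reduce modulo $\mathbb{L}$ via $\sum_{\emptyset\neq J}[E_J^o]=[X_0']$ --- this is precisely the argument the paper delegates to \cite[Proposition 9.5]{abi2}. One correction: the normalization you assert is wrong --- Formula (\ref{formula mot volume}) reads $\mathcal{S}_{X_\infty}=\mathbb{L}^{-m}\sum_{\emptyset\neq J\subset I}(1-\mathbb{L})^{|J|-1}[\widetilde{E}_J^{o}]$, so the prefactor is $\mathbb{L}^{-m}$ rather than $\mathbb{L}^{0}$. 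This does not derail the argument, since $m$ is the relative dimension of $X_\infty$ and hence intrinsic, so you may simply run your reduction on $\mathbb{L}^{m}\mathcal{S}_{X_\infty}$, which carries no negative powers of $\mathbb{L}$ and is still independent of the resolution.
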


\subsubsection*{Acknowledgments}
During the research for this article, I was supported by a research fellowship
 of the \emph{DFG} (Aktenzeichen HA 7122/1-1).
 I would like to thank Alberto Bellardini and in particular Emmanuel Bultot for discussing many technical problems with me.
Moreover, I am thankful to Johannes Nicaise for his discussions, ideas and suggestions.

\section{Preliminaries}
\label{preliminaries}

\subsection{Complete discrete valuation rings}
\label{complete dvr}
Throughout this article, $R$ always denotes a complete discrete valuation 
ring, with residue field $k$, and quotient field $K$.
In order to avoid problems in positive characteristics, we assume that $k$ is always perfect.
We fix a uniformizing parameter $t$,
i.e.~a generator for the
maximal ideal of $R$.
Moreover, if $R$ has equal characteristic, we fix a $k$-algebra structure
$\mu:k\rightarrow R$; this yields an isomorphism $R\cong
k\Pol t \Por$. For any integer $n\geq 0$, we put $R_n:=R/(t^{n+1})$.

For any integer $d>0$ prime
to $p$, we put $K(d):=K[t(d)]/(t(d)^d-t)$. This is a totally
ramified extension of degree $d$ of $K$.
Note that if $k$ is not
algebraically closed, such an extension is not necessarily unique.
We denote by $R(d)$ the
normalization $R[t(d)]/(t(d)^d-t)$ of $R$ in $K(d)$, and for
each $n>0$, we embed $R(d)$ in $R(nd)$ by putting
$t(d)=t(nd)^n$.

\subsection{Formal schemes and rigid varieties}
\label{formal schemes}
An $stft$ formal $R$-scheme $X_\infty$
is a separated formal scheme,
topologically of finite type over $R$.
We denote the category of $stft$ formal $R$-schemes
by $(stft/R)$.
For every $X_{\infty}\in (stft/R)$, 
we denote its special fiber
by $X_0$, and its generic fiber (in the category of separated
quasi-compact rigid $K$-varieties) by $X_\eta$.
For any integer $n\geq 0$, we put $X_n:=X_\infty\times_R
R_n$, which is a separated $R_n$-scheme of finite type.

We say that $X_\infty$
is generically smooth, if $X_\eta$ is a smooth rigid $K$-variety.
We denote by $\Sm(X_\infty)$ the smooth part of $X_\infty$ over
$R$.

\subsection{Group actions}
\label{group actions}
Fix a finite group $G$. 
We say that a left action of $G$ on a scheme $S$ is \emph{good} if every
orbit of this action is contained in an affine open subscheme of $S$.
By \cite[Expos\'e V, Proposition 1.8]{MR0217087} this is the same as requiring a cover of $U$ by affine, open, $G$-invariant subschemes.
By requiring the action to be good, one makes sure that the quotient exists in the category of schemes, see \cite[Expos\'e~V.1]{MR0217087}.
If not mentioned otherwise, all group actions on schemes will be left actions.

For a given separated scheme $S$ with a good $G$-action,
we denote by $(\Sch_{S,G})$ the category whose objects are separated
schemes of finite type over $S$ with a good $G$-action such that the structure map is $G$-equivariant, and whose morphisms are
$G$-equivariant morphisms of $S$-schemes.
One can check that the fiber product exists in this category
by constructing a good $G$-action on the fiber product in the category of separated schemes of finite type.

Let $R$ be a complete discrete valuation ring as in Section \ref{complete dvr}.
A \emph{nice action of $G$ on $R$} is a right action of $G$ on $R$, such
that the induced action on the residue field $k$ is trivial.
In the case of equal characteristic we also assume that $G$ respects the
chosen $k$-algebra structure. 
We say that $G$ acts \emph{nicely} on $R$.
Note that a nice $G$-action on $R$ induces a unique $G$-action on $R_n$ for all $n>0$,
with the property that the quotient maps $R\to R_n$ and $R_n\to R_m$  for $n\geq m>0$ are $G$-equivariant.

We call a $G$-action on $R$ \emph{tame} if the characteristic of the residue field $k$ is prime to the order of $G$,
and \emph{wild} otherwise.

\begin{ex}
\label{ex action on r}
 Let $R$ be a complete discrete valuation ring, and consider $R(d)$,
 a finite totally ramified extension of $R$  of degree $d$,
 with quotient field $K(d)$.
 Then $G:=\Gal(K(d)/K)$ acts on $R(d)$, and because the extension is totally ramified the induced action on the residue field $k$ of $R(d)$ is trivial.
 
 Assume $R$ has equal characteristic with residue field $k$ containing all roots of unity,
 and that $d$ is prime to the characteristic $p$ of $k$.
 Then we have that $R\cong k\Pol t \Por$,
 $R(d)\cong R\Pol t(d)\Por$ with $t(d)^d=t$,
 and the action of $G=\Gal(K(d)/K)\cong \mu_d$, where $\mu_d$ is the group of $d$-th roots of unity, on $R(d)$ is
 given by sending $t(d)$ to $\xi t(d)$ for all $\xi\in \mu_d$.
 \end{ex}

 \noindent
 A \emph{good} $G$-action on a formal scheme $X_\infty$ is a left
action of $G$ on $X_\infty$, such that any orbit is contained in an
affine open formal subscheme of $X_\infty$.
If not mentioned otherwise, all actions on formal schemes will be left actions.

For a given complete discrete valuation ring $R$ with a nice $G$-action, we denote by
$(stft/R,G)$ the category of flat, $stft$ formal $R$-schemes
$X_\infty$, endowed with a good $G$-action compatible with the $G$-action on $R$,
i.e.~the structure morphism $X_\infty\rightarrow
\mathrm{Spf}\,R$ is $G$-equivariant.
Morphisms are
$G$-equivariant morphisms of formal $R$-schemes.
Note that such a $G$-action on a formal scheme $X_\infty$ induces a $G$-action on the $R_n$-scheme $X_n=X_\infty \times_R R_n$ with $G$-equivariant structure map.
Moreover, for all $n\geq m \geq 0$ the restriction maps $X_m\to X_n$ are $G$-equivariant.

\begin{ex}
\label{stan ex}
Consider $R(d)$ with the nice $G$-action as in Example \ref{ex action on r}.
Let $X_\infty$ be a $stft$ formal $R$-scheme, and put
$X_\infty(d):=X_\infty\times_R R(d)$.
Using the universal property of the fiber product, the nice $G$-action on $R(d)$
induces a good $G$-action on
$X_\infty(d)$ such that the structural morphism $X_\infty(d)\rightarrow
\mathrm{Spf}\,R(d)$ is $G$-equivariant.
Hence in particular $X_\infty(d)\in (stft/R(d),G)$.
\end{ex}

\section{Greenberg schemes with group actions}
\label{greenberg}
\noindent
Throughout this section, let $G$ be an abstract finite group, and let $R$ be a complete discrete valuation ring with perfect residue field $k$,
endowed with a nice $G$-action.

\subsection{The Greenberg scheme of a formal scheme}
\label{greenberg definition}
In this subsection we give a short summary of the construction of the Greenberg scheme of a formal scheme,
and fix notations.
We do this in consideration of the nice group action on $R$.
Details, proofs, and more references can be found for example in \cite[Chapter 2.2]{MR2885336}.

\subsubsection{The ring scheme $\mathcal{R}_n$}
Let $n\in \mathbb{N}$.
If R has equal characteristic, set
  \[
   \mathcal{R}_n:(k-alg)\to (rings);\ A\mapsto A\otimes_kR_n.
  \]
If $R$ has mixed characteristic, 
  then let $\mathcal{R}_n$ be the sheafification in the fpqc-topology of the functor
  \[
   \tilde{\mathcal{R}}_n:(k-alg)\to (rings);\ A\mapsto W(A)\otimes_{W(k)}R_n,
  \]
where $W(A)$ is the ring of Witt vectors with coefficient in $A$.
In both cases, $\mathcal{R}_n$ is represented by a ring scheme. 
We also denote this scheme by $\mathcal{R}_n$.

Note that the quotient maps $q_m^n: R_n\to R_m$ induce maps of functors
by sending $f\in\mathcal{R}_n(A)$ to ${(id\otimes q^n_m)\circ f}$,
and thus of schemes $\mathcal{R}_n\to \mathcal{R}_m$ for all $n\geq m \geq 0$.
We define $\mathcal{R}$ to be the $k$-scheme representing the limit of the projective
system $(\mathcal{R}_n)_{n\in \N}$.

\begin{rem}
\label{action rn}
Every automorphism $g_{R_n}$ of $R_n$ inducing the identity on $k$
gives rise to a morphism of the functor $\mathcal{R}_n$
by sending $f\in \mathcal{R}_n(A)$ to $(id\otimes g_{R_n})\circ f$, and
hence we get an automorphism of the scheme $\mathcal{R}_n$.
Thus the right $G$-action on $R_n$ induced by the right $G$-action on $R$
 gives us naturally a right $G$-action on $\mathcal{R}_n$.
 As for all $n\geq m\geq 0$ the quotient maps $q^n_m: R_n \to R_m$ are $G$-equivariant,
 the same holds by construction for the induced maps $\mathcal{R}_n\to \mathcal{R}_m$.
\end{rem}

\subsubsection{The ideal schemes $\mathcal{J}^m_n$}\label{ideal scheme}
In the proofs in Section~\ref{greenberg truncation}
and Section \ref{greenberg h},
we will need to consider ideal schemes, which
can be found for example in \cite[Chapter~4, 2.3.1]{CNS}:
define for all $m\geq n\geq 0$ the functor
\[
 \mathcal{J}^m_n: (k-alg)\to (\Sets);\  A\to \ker(\mathcal{R}_{m}(A)\to \mathcal{R}_n(A)).
\]
It is representable by a closed subscheme of $\mathcal{R}_{m}$,
which we call the \emph{ideal scheme} $\mathcal{J}^m_n$.
If $n\leq m \leq 2n+1$,
the square of $\mathcal{J}^m_n$ in $\mathcal{R}_{m}$ is zero, hence we can view $\mathcal{J}_n^m$ as a module over $\mathcal{R}_n$.

In the case of equal characteristic, for every choice of an uniformizer $t\in R$ we have that for every $k$-algebra $A$
\begin{align}\label{structure Jmn}
 \mathcal{J}^m_n(A)=\{a_{n+1}t^{n+1}+\dots+a_m t^m\mid a_i\in A\}.
\end{align}
Hence we get a functorial bijection $i(A): \mathcal{J}_n^m(A)\to A^{m-n}$ by sending an element of the form $a_{n+1}t^{n+1}+\dots+a_mt^m$ to $(a_{n+1},\dots,a_m)\in A^{m-n}$.

\begin{rem}\label{action jn}
Note that the $G$-action on $\mathcal{R}_{m}$ constructed in Remark \ref{action rn} restricts to $\mathcal{J}_n^m$,
because the map $\mathcal{R}_{m}\to \mathcal{R}_n$ is $G$-equivariant.

Let $A$ be a $k$-algebra.
Take any $g\in G$, and denote by $g_{m}\in \Aut(\mathcal{J}_n^m(A))$
and ${g_n\in \Aut(\mathcal{R}_n(A))}$ the corresponding automorphisms.
As both the $G$-action on $\mathcal{J}_n^m$ and $\mathcal{R}_n$
come from the same $G$-action on $R$,
 the action on $\mathcal{J}_n^m(A)$
is compatible with the $\mathcal{R}_n(A)$-module structure,
which we have in the case of $n\leq m \leq 2n+1$,
i.e.~for all $r\in \mathcal{R}_n(A)$ and $x\in \mathcal{J}_n^m(A)$ we have
 $g_{m}(r x)=g_n(r)g_{m}(x)$.
\end{rem}

\begin{ex}\label{ex action on Jmn}
 Assume that $R$ has equal characteristic and $k$
 contains all roots of unity.
As $G$ acts nicely on $R$,
$G$ acts trivially on the chosen lifting of $k$.
Using this lifting of $k$ we get an $A$-module structure on $\mathcal{J}_n^m(A)$ with the property
that for all $a\in A$ and $x\in \mathcal{J}_n^m(A)$ we have
$g_m(ax)=ag_m(x)$.
Assume now in addition that $ G $ acts tamely on $R$.
Then after maybe changing the uniformizer $t$ of $R$, we may assume
that $g\in G$ acts on $\mathcal{J}_n^m(A)$, which is given as in (\ref{structure Jmn}),
 by sending ${a_{n+1}t^{n+1}+\dots+a_mt^m}$
 to $a_{n+1}\xi^{n+1}t^{n+1}+\dots+a_m\xi^mt^m$,
 where $\xi\in k$ is a $\lvert g\rvert$-th root of unity.
 If $G$ is abelian, we can chose a $t$ not depending on $g$.
\end{ex}

\noindent
Now introduce the notation  $\mathcal{J}_n$ for $ \mathcal{J}_n^{n+1}$,
which has a canonical structure as a vector space.
Let $\mathfrak{m}\subset R$ be the maximal ideal,
and denote by $V$ the one dimensional $k$-vector space $\mathfrak{m}/\mathfrak{m}^2$.
Set $V(i):=V^{\otimes i}$ for $i\geq 0$, and for $i<0$ set $V(i):=V(-i)^*$, the dual of $V(-i)$.
Let $A$ be again a $k$-algebra.
Then, as explained in \cite[Chapter 4, 2.3.1]{CNS},
the map
\[
 \Psi_{\text{eq}}:V(n+1)\otimes A\to \mathcal{J}_n(A);\  v_0\otimes \dots \otimes v_n\otimes a\mapsto  v_0\dots v_n a
\]
in the case of equal characteristic, and
\[
 \Psi_{\text{mix}}:V(\beta)\otimes~^{p^{\alpha}}\!\!A\to \mathcal{J}_n(A);\  v_0\otimes \dots \otimes v_n\otimes a\mapsto  v_0\dots v_n(0,\dots,0,a)
\]
in the case of mixed characteristic $(0,p)$ of absolute ramification index $e\geq 0$,
are isomorphism of $A$-modules.
In the case of mixed characteristic,
$\alpha$ is the integer, such that $R_{n+1}$ has characteristic $p^{\alpha}$,
$\beta$ is the remainder of the Euclidean division of $n+1$ by $e$,
and
$(0,\dots,0,a)\in W(A)$
is the $(\alpha-1)$-th Verschiebung of $a$.

\begin{rem}\label{gaction Psi}
 Note that the $G$-action on $R_1=R/\mathfrak{m}^2$ restricts to $V$,
 because automorphisms map maximal ideals to maximal ideals.
As the action of $G$ on $R$ is nice,
and hence the induced action on the residue field $k$ is trivial,
the action on $V$ is given by multiplication with an element $\xi_g\in k$
for every $g\in G$.
For $i\geq 0$, let $G$ act on $V(i)$ by acting on the factors separately. 
It follows in particular that the automorphisms of $V(i)$
defining the action of $G$ are linear maps.

It is easy to see that $\Psi_{\text{eq}}$ and $\Psi_{\text{mix}}$ are in fact $G$-invariant,
for the considered $G$-action on $\mathcal{J}_n$
and on $V(i)$ and the trivial action on $A$ and on $~^{p^\alpha}\!\!A$, respectively.

For $i\leq 0$, we associate for every $g\in G$ the dual $g_V^*$ of the corresponding automorphism $g_V$ of $V(-i)$,
i.e.~$f\in V(i)=V^*(-i)$ gets send to $f\circ g_V$.
With this actions the canonical map $V(i)\otimes V(-i)\to k$ sending $(f,v)$
to $f(v)\in k$ is $G$-equivariant, if we equip $k$ with the trivial action of $G$.

\end{rem}

\subsubsection{The Greenberg scheme}

\begin{defn}
Let $X_n$ be an $R_n$-scheme of finite type.
By \cite{Greenberg2} the functor
 \[
 (k-alg)\rightarrow (Sets);\  A\mapsto\Hom_{R_n}(\Spec(\mathcal{R}_n(A)),X_n)=X_n(\mathcal{R}_n(A))
\]
is representable by a $k$-scheme of
finite type.
We call this scheme the \emph{$n$-th Greenberg scheme} $\Gr_n(X_n)$ of $X_n$.
If $X_\infty$ is a $stft$ formal $R$-scheme, we put for each $n\geq 0$
$\Gr_n(X_\infty):=\Gr_n(X_n)$,
with $X_n=X_\infty \times_R R_n$.
\end{defn}
\noindent
For any pair of
integers $n\geq m\geq 0$, and any $R_n$-scheme of finite type $X_n$,
the morphisms $\mathcal{R}_n(A)\to \mathcal{R}_m(A)$ for all $k$-algebras $A$ induce a canonical morphism of $k$-schemes
\[
\theta^n_m: \Gr_n(X_\infty)\rightarrow \Gr_m(X_\infty).
\]
As explained in \cite[Chapter 2.2]{MR2885336}, this morphism is affine. 
Hence we can take
the projective limit in the
category of $k$-schemes.

\begin{defn}
 Let $X_\infty$ be a $stft$ formal $R$-scheme. Then
\[
 \Gr(X_\infty):=\lim_{\stackrel{\longleftarrow}{n}}\Gr_n(X_\infty)
 \]
 is called the \emph{Greenberg scheme} of $X_\infty$.
\end{defn}

\noindent
For all $n\geq 0$, $\Gr(X_\infty)$ is endowed with natural truncation maps
\[
\theta_n:\Gr(X_\infty)\rightarrow \Gr_n(X_\infty).
\]

\noindent
 Let $h:Y_\infty \to X_\infty$ be a morphism of formal schemes,
i.e.~we have compatible morphisms $h_n: Y_n \to X_n$ for all $n\in \mathbb{N}$.
 The $h_n$ induce maps 
 \[
 \Gr_n(h): \Gr_n(Y_\infty)\to \Gr_n(X_\infty),
 \]
 which are, on the level of functors, given
 by sending a map $\gamma: \Spec(\mathcal{R}_n(A))\to Y_n$
 to ${h_n \circ \gamma: \Spec(\mathcal{R}_n(A)) \to X_n}$ for all $k$-algebras $A$.
 By construction, these maps are compatible with the truncation maps,
 so we also get a map
 \[
  \Gr(h): \Gr(Y_\infty)\to \Gr(X_\infty),
 \]
and the following diagram commutes:
\begin{equation}
 \label{com dia}
 \xymatrix{
 \Gr(Y_\infty)\ar[r]^{\theta_n}\ar[d]_{\Gr(h)}&\Gr_n(Y_\infty)\ar[r]^{\theta^n_m}\ar[d]^{\Gr_n(h)}&\Gr_m(Y_\infty)\ar[d]^{\Gr_m(h)}\\
 \Gr(X_\infty)\ar[r]_{\theta_n}&\Gr_n(X_\infty)\ar[r]_{\theta^n_m}&\Gr_m(X_\infty)
 }
\end{equation}
Note that $\theta_n$ and $\theta_m^n$ depend on $Y_\infty$ and $X_\infty$, respectively.
To keep the notation simple we do not indicate this dependence.
If it is clear from the context which map we mean, we will write $h$ instead of $\Gr(h)$ or $\Gr_n(h)$.


%
%

\begin{rem}\label{R(F)}
Note that every point $x\in\Gr(X_\infty)$ with residue field $F$ corresponds to a section
$\psi\in X_\infty(\mathcal{R}(F))$.
If $R$ has equal characteristic,
then it is easy to see that $\mathcal{R}(F)$
is a complete discrete valuation ring with residue field $F$ and ramification index one over $R$.
\end{rem}

\begin{rem}\label{rem hn}
Let $Y$ be a $k$-scheme, and consider
$h_n(Y):=(\lvert Y \rvert, \Hom_k(Y, \mathcal{R}_n))$,
the locally ringed space with underlying topological space $\lvert Y\rvert$ and structure sheaf $\Hom_k(Y,\mathcal{R}_n)$.
If $Y=\Spec(A)$ is affine, $h_n(Y)$ is isomorphic to the affine scheme
$\Spec(\mathcal{R}_n(A))$.
With this notation we have for all formal $sftf$ schemes $X_\infty$ over $R$ and all $n\in \mathbb{N}$ that
\[
 \Gr_n(X_\infty)(Y)=\Hom_k(Y,\Gr_n(X_\infty))=\Hom_{R_n}(h_n(Y),X_n).
\]
Some more information on $h_n(Y)$ can be found for example in \cite[Section 3.1]{sebag1}.
\end{rem}

%

\subsection{Construction of the group action}
\label{greenberg action}
The aim of this subsection is to construct a $G$-action on the Greenberg scheme
of a stft formal $R$-scheme with $G$-action,
such that the maps in Diagram (\ref{com dia}) are $G$-equivariant.
Note that
the following construction was already done in the case of Example \ref{stan ex} in
\cite[6.1.2]{NiSe-weilres};
here we show how this result extends in a more general setting.

\begin{prop}\label{Gaction}
For every $X_\infty \in (stft/R,G)$,
there are good actions of $G$ on the $k$-schemes
$\Gr_n(X_\infty)$ for every integer $n\geq 0$, and
on $\Gr(X_\infty)$,
such that the action on $\Gr_0(X_\infty)\cong X_0$ coincides with the $G$-action induced by the action on $X_\infty$,
and such that for
$n\geq m\geq 0$, the truncation maps
\[
\theta^n_m:\Gr_n(X_\infty)\rightarrow \Gr_m(X_\infty)\text{ and }
\theta_n:\Gr(X_\infty)\rightarrow\Gr_n(X_\infty)
\]
are $G$-equivariant.
Moreover, if $h: Y_\infty \to X_\infty$ is a morphism in $(stft/R,G)$,
then the induced maps $\Gr_n(h)$ and $\Gr(h)$ are $G$-equivariant, too.
\end{prop}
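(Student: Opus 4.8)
The plan is to construct the action first on the functor of points of each $\Gr_n(X_\infty)$ and then transport it to the scheme by Yoneda. Recall that an $A$-valued point of $\Gr_n(X_\infty)$ is a morphism $\psi\colon\Spec(\mathcal{R}_n(A))\to X_n$ over $R_n$. For $g\in G$, let $\lambda_g\colon X_n\to X_n$ be the automorphism induced by the left $G$-action on $X_\infty$, and let $\tau_g(A)\colon\mathcal{R}_n(A)\to\mathcal{R}_n(A)$ be the ring automorphism attached to $g$ in Remark \ref{action rn}. Since the action on $X_\infty$ is a \emph{left} action while the action on $R$, and hence on $\mathcal{R}_n$, is a \emph{right} action, I would twist by an inverse on the source and set
\[
 g\cdot\psi:=\lambda_g\circ\psi\circ\Spec(\tau_g(A))^{-1}.
\]

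The crucial and most delicate step is to check that $g\cdot\psi$ is still a morphism over $R_n$. Writing $\iota\colon R_n\to\mathcal{R}_n(A)$ for the structural ring map, $s:=\Spec(\iota)$, and $a\colon X_n\to\Spec(R_n)$ for the structure morphism of $X_n$, I would first extract from the explicit description in Remark \ref{action rn} the relation $\tau_g(A)\circ\iota=\iota\circ\bar g$, where $\bar g$ denotes the automorphism of $R_n$ induced by $g$; applying $\Spec$ this reads $s\circ\Spec(\tau_g(A))=\beta_g\circ s$ with $\beta_g:=\Spec(\bar g)$. The compatibility of the $G$-action on $X_\infty$ with that on $R$ means precisely that $a$ is equivariant, i.e.\ $a\circ\lambda_g=\beta_g\circ a$. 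Combining these gives
\[
 a\circ(g\cdot\psi)=\beta_g\circ a\circ\psi\circ\Spec(\tau_g(A))^{-1}=\beta_g\circ s\circ\Spec(\tau_g(A))^{-1}=\beta_g\circ\beta_g^{-1}\circ s=s,
\]
so $g\cdot\psi$ lies over $R_n$. Functoriality of $\tau_g$ in $A$ then shows $\psi\mapsto g\cdot\psi$ is a morphism of functors, hence an automorphism of $\Gr_n(X_\infty)$; that these assemble into a left action follows by combining $\lambda_{gh}=\lambda_g\circ\lambda_h$ with the right-action relation $\tau_{gh}=\tau_h\circ\tau_g$, whose inverses recombine in the correct order.

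Next I would verify the asserted compatibilities. Equivariance of $\theta^n_m$ is immediate from the displayed formula once one uses that the transition morphisms $\mathcal{R}_n\to\mathcal{R}_m$ are $G$-equivariant (Remark \ref{action rn}) and that the restrictions $X_n\to X_m$ are $G$-equivariant. At level $0$ the action on $R_0=k$ is trivial because the $G$-action on $R$ is nice, so $\tau_g$ is the identity on $\mathcal{R}_0$ and $g\cdot\psi=\lambda_g\circ\psi$; under the identification $\Gr_0(X_\infty)\cong X_0$ this is exactly the action induced by $X_\infty$. For goodness, since the action on $X_\infty$ is good the scheme $X_0$ has a cover by $G$-invariant affine opens, and because $\theta^n_0\colon\Gr_n(X_\infty)\to X_0$ is affine and $G$-equivariant their preimages form a $G$-invariant affine open cover of $\Gr_n(X_\infty)$.

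Finally I would pass to the limit: the compatibility with all $\theta^n_m$ assembles the actions on the $\Gr_n(X_\infty)$ into a $G$-action on $\Gr(X_\infty)=\varprojlim_n\Gr_n(X_\infty)$ for which every $\theta_n$ is equivariant, with goodness again following from the affine equivariant map $\theta_0\colon\Gr(X_\infty)\to X_0$. For a morphism $h\colon Y_\infty\to X_\infty$ in $(stft/R,G)$ the induced $h_n$ satisfy $\lambda^X_g\circ h_n=h_n\circ\lambda^Y_g$, and since $\tau_g$ is intrinsic to $R$ and hence the same on both sides, $\Gr_n(h)(\psi)=h_n\circ\psi$ gives $\Gr_n(h)(g\cdot\psi)=g\cdot\Gr_n(h)(\psi)$ directly, and $\Gr(h)$ is equivariant in the limit. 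I expect the only real obstacle to be the very first step: correctly reconciling the left action on $X_\infty$ with the right action on $R$, which forces the inverse twist on the source and makes the well-definedness check over $R_n$ the substantive point; everything after that is formal bookkeeping with functoriality.
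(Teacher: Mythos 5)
Your proposal is correct and follows essentially the same route as the paper: the action is defined on $A$-points by $\psi\mapsto g_{X_n}\circ\psi\circ g_{\mathcal{R}_n(A)}^{-1}$, transported by Yoneda, checked to be over $R_n$ via equivariance of the structure maps, and all compatibilities (truncations, level $0$, goodness via the affine equivariant map to $X_0$, morphisms $h$, and the limit) are verified exactly as in the paper's proof. Your explicit check of the left-action axiom from the right-action relation $\tau_{gh}=\tau_h\circ\tau_g$ is a detail the paper leaves implicit, but it is the same argument.
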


\begin{proof}
Note that
it suffices to construct the action of $G$ on $\Gr_n(X_\infty)$ for any integer $n\geq 0$, and to show
that the truncation maps $\theta^m_n$ 
are equivariant for any $n\geq m\geq 0$. The
action on $\Gr(X_\infty)$ is then obtained by passing to the
projective limit $n\to \infty$, and the $\theta_n$ are $G$-equivariant by construction.

Take any $k$-algebra $A$.
By Remark \ref{action rn} there is a right $G$-action on $\mathcal{R}_n(A)$ which is compatible with the $G$-action on $R_n$.
Hence we get a left $G$-action on $\Spec(\mathcal{R}_n(A))$
such that the structure map to $\Spec(R_n)$ is $G$-invariant.
Fix a $g\in G$. 
Let ${g}_{\mathcal{R}_n(A)}\in \Aut(\Spec(\mathcal{R}_n(A)))$ be the corresponding automorphism.
Consider the $G$-action on $X_n$ induced by the $G$-action on $X_\infty$,
and let $g_{X_n}\in \Aut(X_n)$ be the automorphism corresponding to $g$.
We define a map
\begin{align*}\label{eq gaction}
 g_F\!:\!\Hom_{R_n}(\Spec(\mathcal{R}_n(A)),X_n)\!=\!\Gr_n(X_\infty)(\Spec(A))&\rightarrow \Hom_{R_n}(\Spec(\mathcal{R}_n(A)),X_n);\\
 f&\mapsto g_{X_n} \circ f \circ g_{\mathcal{R}_n(A)}^{-1}.
\end{align*}
Here $g_{X_n} \circ f \circ g_{\mathcal{R}_n(A)}^{-1}$  is an $R_n$-morphism,
because the structure map of the two $R_n$-schemes $\Spec(\mathcal{R}_n(A))$ and $X_n$ are $G$-equivariant.
Hence $g_F$ is well defined.
For every morphism of $k$-algebras $A'\to A$ the induced map ${\Spec(\mathcal{R}_n(A))\to \Spec(\mathcal{R}_n(A'))}$ is $G$-equivariant,
so $g_F$ yields a natural transformation of the functor
\[
F:(Sch/k)^{opp}\rightarrow (Sets);\ Y\mapsto \Gr_n(X_\infty)(Y).
\]
Here we use that $F$ is a sheaf in the Zariski topoogy and hence it suffices to give maps  on affine schemes $Y=\Spec(A)$.
By Joneda's lemma we get an automorphism of the $k$-scheme
$\Gr_n(X_\infty)$. Doing the same construction for every $g\in G$ we obtain a group action of
$G$ on $\Gr_n(X_\infty)$.

Note that for $n=0$, the action on $\mathcal{R}_0(A)=A$ is trivial for all $k$-algebras $A$, and hence
the action on $\Gr_0(X_\infty)\cong X_0$ is just the action on $X_0$ induced by the action on $X_\infty$.
For any pair of integers $m\geq n\geq0$,
the truncation morphism $\theta^m_n$ is equivariant, since for
any $k$-algebra $A$ the natural morphism $\mathcal{R}_m(A)\rightarrow \mathcal{R}_n(A)$ is equivariant, see Remark~\ref{action rn},
and the same holds for $X_m \to X_n$ by construction of the group action.

As the maps $\theta_m^n$ are affine and $G$-equivariant,
a cover of $X_0$ by affine $G$-invariant open subsets gives rise to a similar cover of $\Gr_n(X_\infty)$ and $\Gr(X_\infty)$,
thus a good $G$-action on $X_\infty$ induces good
$G$-actions on the Greenberg schemes.

Now take a $G$-equivariant morphism of formal schemes $h:Y_\infty \to X_\infty$.
Then for every affine $k$-scheme $Y=\Spec(A)$,
$f\in \Gr_n(Y_\infty)(Y)=\Hom_{R_n}(\Spec(\mathcal{R}_n(A)),Y_\infty)$ and 
$g\in G$ we have that
$h_n\circ (g_{Y_n}\circ f \circ g_{\mathcal{R}_n(A)}^{-1})=g_{X_n}\circ (h_n\circ f)\circ g_{\mathcal{R}_n(A)}^{-1}$,
because $h$ is $G$-equivariant.
Hence the $\Gr_n(h)$ are $G$-equivariant, too.
Taking the limit gives us the same result for $\Gr(h)$.
\end{proof}

\begin{rem}\label{action on closed points}
Let $x$ be a point of $\Gr(X_\infty)$ corresponding canonically to an unramified
extension $R'$ of $R$, and a section $\psi$ in $X_\infty(R')$, see Remark \ref{R(F)}.
Since the residual action of $G$ on $k$ is trivial, the $G$-action
on $R$ extends canonically to a $G$-action on $R'$ which induces
the trivial action on the residue field.
For any $g\in G$, let $g_{R'}\in \Aut(\Spf(R'))$ and $g_{X_\infty}\in \Aut(X_\infty)$
be the corresponding automorphisms.
Then $g$ maps $x$
to the point corresponding to the section $g_{X_\infty}\circ \psi \circ g_{R' }^{-1} \in X_\infty(R')$.
\end{rem}

%
%

\subsection{The structure of the truncation maps}
\label{greenberg truncation}
The aim of this subsection is to study the truncation maps
on the Greenberg scheme under consideration of the  $G$-action constructed in the previous subsection.
All considered $G$-actions on Greenberg schemes are those constructed in Proposition \ref{Gaction}.

%
\begin{defn}\label{affine bundle}
Let $B$ be an $S$-scheme.
An \emph{affine bundle over $B$ of rank $d$} is a $B$-scheme $V$
with a vector bundle $E\to B$ of rank $d$ and a $B$-morphism $\varphi: E\times_BV\to V$ such that 
$\varphi\times p_V: E\times _BV \to V\times_B V$, where $p_V$ denotes the projection to $V$, is an isomorphism of $B$-schemes.
We call $E$ the \emph{translation space} of $V$.

An affine bundle $V$ over $B$ is called \emph{$G$-equivariant}, if $V$ and $B$ are in $(\Sch_{S,G})$, and $V\to B$ is $G$-equivariant.
The $G$-action on $V\to B$ is called \emph{affine} if there is a $G$-action on $E$, linear over the action on $B$,
such that $\varphi$ is $G$-equivariant.
An action on $E$ is \emph{linear over the action on $B$}
if for all $g\in G$ the
map $g':E \to g_B^*E$ induced by the following Cartesian diagram
\begin{equation*}\label{diagram1}
  \xymatrix{
 E\ar@/^/[drr]^{g_E}\ar@/_/[ddr]\ar@{-->}[rd]^{g'}\\
 &g_B^*(E)\ar[r]\ar[d]& E\ar[d]\\
 & B\ar[r]^{g_B}& B
 }
\end{equation*}
 is a morphism of vector bundles.
 Here $g_B\in \Aut(B)$ and $g_E\in \Aut(E)$ are the automorphisms of $B$ and $E$ induced by $g$.
\end{defn}

%

\noindent
For a discussion of equivariant affine bundles with affine group action
we refer to \cite[Section 3]{abi2}.
We are now using the definition to describe the truncation maps.

\begin{prop}\label{prop structure map}
Let $X_\infty\in (stft/R,G)$ be smooth of pure
relative dimension $m$ over $R$.
Then
for every integer $n\geq 0$, the truncation map
\[
\theta^{n+1}_n:\Gr_{n+1}(X_\infty)\rightarrow\Gr_{n}(X_\infty)
\]
is a $G$-equivariant affine bundle of rank $m$ with affine $G$-action.
\end{prop}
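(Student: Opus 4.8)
The plan is to upgrade the known non-equivariant description of $\theta^{n+1}_n$ as an affine bundle by threading the $G$-actions supplied by Remarks \ref{action rn} and \ref{action jn} through every piece of that description. First I would recall the deformation-theoretic picture. Fix a $k$-algebra $A$ and a point $f\in\Gr_n(X_\infty)(A)$, i.e.\ an $R_n$-morphism $f\colon\Spec(\mathcal{R}_n(A))\to X_n$. Lifting $f$ along $\theta^{n+1}_n$ means extending it to an $R_{n+1}$-morphism $\tilde f\colon\Spec(\mathcal{R}_{n+1}(A))\to X_{n+1}$ along the square-zero surjection $\mathcal{R}_{n+1}(A)\to\mathcal{R}_n(A)$ with kernel $\mathcal{J}_n(A)$. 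Since $X_\infty$ is smooth over $R$, such lifts always exist and form a torsor under
\[
E_f:=\Hom_{\mathcal{R}_n(A)}\big(f^*\Omega^1_{X_n/R_n},\,\mathcal{J}_n(A)\big),
\]
the torsor action being "add a derivation to a lift." Via $\Psi_{\mathrm{eq}}$ (resp.\ $\Psi_{\mathrm{mix}}$), $\mathcal{J}_n(A)$ is an invertible twist of $A$, so $E_f$ is locally free of rank $m$; as $f$ varies these assemble into a rank $m$ vector bundle $E$ on $\Gr_n(X_\infty)$ together with a map $\varphi\colon E\times_{\Gr_n(X_\infty)}\Gr_{n+1}(X_\infty)\to\Gr_{n+1}(X_\infty)$ exhibiting $\theta^{n+1}_n$ as an affine bundle. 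This is the non-equivariant statement, cf.\ \cite{CNS}.

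Next I would put a $G$-action on $E$. By Proposition \ref{Gaction}, $g\in G$ sends $f$ to $g\cdot f=g_{X_n}\circ f\circ g_{\mathcal{R}_n(A)}^{-1}$. The $G$-equivariant structure morphism $X_n\to\Spec R_n$ linearizes $\Omega^1_{X_n/R_n}$, so $g_{X_n}$ yields an isomorphism $(g\cdot f)^*\Omega^1_{X_n/R_n}\to f^*\Omega^1_{X_n/R_n}$, and Remark \ref{action jn} gives the automorphism $g_{n+1}$ of $\mathcal{J}_n(A)$. I would define the transported homomorphism $g\cdot\phi$ over $g\cdot f$ by conjugating $\phi$ with these two isomorphisms. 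The point to check is that $g\cdot\phi$ is again $\mathcal{R}_n(A)$-linear relative to the twisted base point $g\cdot f$: this is exactly the semilinearity $g_{n+1}(r\,x)=g_n(r)\,g_{n+1}(x)$ of Remark \ref{action jn}. Since $g_{n+1}$ is $k$-linear (Remark \ref{gaction Psi}) and only acts on the $\mathcal{J}_n$-factor, the induced map $g'\colon E\to g_B^*E$ is fiberwise linear, so the action on $E$ is linear over the action on $B=\Gr_n(X_\infty)$ in the sense of Definition \ref{affine bundle}.

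Finally I would verify that $\varphi$ is $G$-equivariant, i.e.\ $g\cdot\varphi(\phi,\tilde f)=\varphi(g\cdot\phi,\,g\cdot\tilde f)$. Both sides are lifts of $g\cdot f$, so by the torsor property it suffices to compare the derivations by which they differ from one common reference lift. Writing $g\cdot\tilde f=g_{X_{n+1}}\circ\tilde f\circ g_{\mathcal{R}_{n+1}(A)}^{-1}$ and using that the $G$-actions on $\mathcal{R}_{n+1}$, $\mathcal{R}_n$ and $\mathcal{J}_n$ all descend from the single nice action on $R$ (Remark \ref{action rn}), the effect of ``add $\phi$, then apply $g$'' equals that of ``apply $g$, then add $g\cdot\phi$''; this is once again the semilinearity of Remark \ref{action jn}, together with the compatibility of $g_{X_{n+1}}$ with $g_{X_n}$. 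Passing from $A$-points to the representing schemes by the Yoneda/sheaf argument already used in Proposition \ref{Gaction}, and noting that all these actions are good because the base action is (as in Proposition \ref{Gaction}), yields the affine bundle with affine $G$-action.

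I expect the main obstacle to be precisely this equivariance of $\varphi$: one must keep careful track of the conjugation by $g_{\mathcal{R}_{n+1}(A)}^{-1}$ acting on the torsor of lifts and confirm that it intertwines the translation action with the transported translation action. The single load-bearing identity throughout is the semilinearity $g_{n+1}(rx)=g_n(r)g_{n+1}(x)$ from Remark \ref{action jn}; everything else is functorial bookkeeping and the descent from $A$-points to $k$-schemes already carried out in Proposition \ref{Gaction}.
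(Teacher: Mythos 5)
Your proposal is correct and follows essentially the same route as the paper: describe $\theta^{n+1}_n$ as a torsor under the Hom-module of derivations into $\mathcal{J}_n(A)$, act on it by conjugating with $g_{X_n}$ and $g_{n+1}$, and use the semilinearity of Remark \ref{action jn} together with the $G$-equivariance of $\Psi$ (Remark \ref{gaction Psi}) to check that $\varphi$ is equivariant and that the translation-space action is linear over the base. The paper merely carries out in full the two steps you compress — the explicit representability of the translation space as $V(V(-\beta)\otimes_k\Omega^R_{B_0/k}\otimes_{B_0,\tau}C)$ (including the Frobenius pullback in mixed characteristic) and the module-level computation showing $g'\colon E\to g_B^*E$ is a morphism of vector bundles.
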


\begin{proof}
Locally on $\Gr_n(X_\infty)$, this proposition was shown in the non-equivariant case for example in \cite[Proposition 2.10]{MR2885336},
using \'etale covers.
In this proof we will use a proof of the non-equivariant case from \cite[Chapter 4, Theorem~2.4.4]{CNS},
because there the translation space is constructed explicitly using derivations.
We will start explaining this construction, and then construct a $G$-action on the translation space and examine it.
Note that all the steps in the prove which do not correspond to the $G$-action are taken from \cite[Chapter 4, Section 2]{CNS},
where one can also find more explanations and proofs.


\medskip
\noindent
\textit{Construction of the affine bundle structure.}
Let $\gamma: h_n(\Gr_n({X_\infty}))\to X_\infty$ be the morphism corresponding to the identity morphism on $\Gr_n(X_\infty)$,
see Remark \ref{rem hn}.
Consider the sheave
\[
(\Sch_{\Gr_n(X_\infty)}) \to (\text{Ab});\ (f:S\to \Gr_n(X_\infty))  \mapsto \Hom_{\mathcal{O}_{h_n(S)}}(h_n(f)^*\gamma^*\Omega_{X_\infty/R},\mathcal{J}_n),
\]
and
denote it by $\mathcal{J}_{X_\infty}^n$.
Here $h_n(f): h_n(S)\to h_n(\Gr(X_\infty))$ is the morphism induced by $f: S\to \Gr(X_\infty)$,
and $\mathcal{J}_n$ is the $\mathcal{R}_{n}$-module
defined in Section \ref{ideal scheme},
which becomes a sheaf of $\mathcal{O}_{h_n(S)}$-modules by tensoring it over $R_n$.

We will show later how this sheaf is represented by a vector bundle $V_{X_\infty}^n$ of rank~$m$.
Now we will explain 
on the level of sheaves the construction of a map 
${\varphi: V_{X_\infty}^n\times_{\Gr_n(X_\infty)}\Gr_{n+1}(X_\infty)\to \Gr_{n+1}(X_\infty)}$,
which makes $\Gr_{n+1}(X_\infty)$ an affine bundle of rank $m$ over $\Gr_n(X_\infty)$
with translation space $V_{X_\infty}^n$, see \cite[Chapter~4, Theorem 2.4.4]{CNS}.
Note that it is sufficient to give maps on sheaves over $\Gr_n(X_\infty)$ for affine $\Gr_n(X_\infty)$-schemes only.
Thus using that the truncation maps are affine,
we can without loss of generality replace $X_\infty$ by an open affine subspace $\Spf(B)$.
Note that we may assume that the action of $G$ on $X_\infty$ restricts to $\Spf(B)$,
because the action on $X_\infty$ is good by assumption,
and therefore $X_\infty$ is covered by $G$-invariant affine open subspace.

Let  $S=\Spec(A)$ be an affine point of $\Gr_n(X_\infty)$,
which corresponds to an $R$-morphism ${h: B\to \mathcal{R}_n(A)}$.
With this notation we have
\[
 \mathcal{J}_{X_\infty}^n(S)=\Hom_{\mathcal{R}_n(A)}(\Omega_{B/R}\otimes_{B,h} \mathcal{R}_n(A), \mathcal{J}_n(A)).
\]
Hence an element in $\mathcal{J}_{X_\infty}^n(S)$
corresponds to an $R$-derivation $D$ of $B$
with values in the $\mathcal{R}_n(A)$-module $\mathcal{J}_n(A)\subset \mathcal{R}_{n+1}(A)$
over the morphism $h:B\to \mathcal{R}_n(A)$.
This means by definition that for all $r\in R\subset B$ we have that $D(r)=0$,
and for all $b_1,b_2\in B$ we have
\[
 D(b_1+b_2)=D(b_1)+D(b_2) \text{ and } D(b_1b_2)=h(b_1)D(b_2)+h(b_2)D(b_1).
\]
Recall furthermore that
$ \Gr_{n+1}(X_\infty)(S)=\Hom_R(B,\mathcal{R}_{n+1}(A))$.
The required maps
\[
 \varphi(S):\mathcal{J}_{x_\infty}^n(S)\times\Gr_{n+1}(X_\infty)(S)\to \Gr_{n+1}(X_\infty)(S)
\]
are given by sending $(D,h')$ to $h'+D$.

\medskip
\noindent
\textit{Construction of the group action.}
Take any $g\in G$.
Denote by $g_B$ the corresponding automorphism of $B$,
and let $g_n$ be the corresponding automorphism of $\mathcal{R}_n(A)$ for all $n\geq 0$.
Denote by $g_n$ also the restriction of the $G$-action on $\mathcal{R}_n(A)$ to $\mathcal{J}_{n-1}(A)$.
Take any derivation $D\in \mathcal{J}_{X_\infty}^{ n}(S)$, and look at $\tilde{D}:=g_{n+1}^{-1} \circ D\circ g_B$.
For all $r\in R$ we have that
$\tilde{D}(r)=g_{n+1}^{-1} (D( g_B(r)))= g_{n+1}^{-1}(0)=0$.
This is due to the fact that $g_B(r)\in R$, because $R\to B$ is $G$-equivariant.
 As $g_{n+1}^{-1}$ and $g_B$ are ringhomomorphisms, and $D$ is a derivation, 
and hence all are additive,
$\tilde{D}$ is additive, too.
Moreover we have for $b_1,b_2\in B$ that
\begin{align*} 
\tilde{D}(b_1b_2)
&=g_{n+1}^{-1}(h(g_B(b_1))D(g_B(b_2))+h(g_B(b_2))D(g_B(b_1)))\\
&=\tilde{h}(b_1)\tilde{D}(b_2)+\tilde{h}(b_2)\tilde{D}(b_1)
\end{align*}
with $\tilde{h}:=g_{n+1}^{-1}\circ h \circ g_B$.
So $\tilde{D}$ is a derivation over $\tilde{h}$.
By Remark \ref{action jn}, we have that $\tilde{h}=g_n^{-1}\circ h \circ g_B$.
Note that the action on $\Gr_n(X_\infty)$ sends the $\Gr_n(X_\infty)$-scheme
$S=\Spec(A)$ which corresponds to the morphism
$h: B\to \mathcal{R}_n(A)$
to the $\Gr_n(X_\infty)$-scheme corresponding to the morphism $g_n^ {-1}\circ h \circ g_B=\tilde{h}$.
Hence $D\mapsto \tilde{D}$ gives rise to a well defined map from
$\mathcal{J}_{X_\infty}^n(S)$ to $\mathcal{J}_{X_\infty}^n(g_{\Gr_n}(S))$,
where $g_{\Gr_n}$ denotes the automorphism of $\Gr_n(X_\infty)$ corresponding to $g\in G$.

Doing the same for every affine scheme $Y$ over $\Gr_n(X_\infty)$,
we get a morphism of the sheaf $\mathcal{J}_{X_\infty}^n$, and hence an automorphism of the scheme $V_{X_\infty }^n$
representing it, over $g_{\Gr_n}$.
Doing the same for every $g\in G$,
we get a well defined $G$-action on ${V}_{X_\infty}^n$ over the $G$-action on $\Gr_n(X_\infty)$.
As 
\[
 g_{n+1}^{-1}\circ h'\circ g_B+g_{n+1}^{-1}\circ D \circ g_B =g_{n+1}^{-1}\circ (h'+D) \circ g_B
\]
for all $g\in G$, $\varphi$ is $G$-equivariant with the considered $G$-actions.

\medskip
\noindent
\textit{Description of the group action without derivations.}
Note now that for all $g\in G$ the ring homomorphism $g_B: B\to B$
induces an additive map ${g_\Omega : \Omega_{B/R}\to \Omega_{B/R}}$
by sending $b'db$ to $g_B(b')d(g_B(b))$,
with $b,b'\in B$. 
Here $d: B\to \Omega_{B/R}$ is the canonical map, which is $G$-equivariant by construction.
Let $D$ be again a derivation of $B$ with image in $\mathcal{J}_n(A)$ over $h$, and
$\tilde{D}=g_{n+1}^{-1}\circ D \circ g_B$.
By the universal property of $\Omega_{B/R}$ there exists a unique morphism of $B$-modules
$f:\Omega_{B/R}\to \mathcal{J}_n(A)$ with $f\circ d =D$, and a unique morphism of $B$-modules
$\tilde{f}:\Omega_{B/R}\to \mathcal{J}_n(A)$ with $\tilde{f}\circ d =\tilde{D}$.
Note that in the first case $\mathcal{J}_n(A)$
is a $B$-module via $h$,
and in the second case via $\tilde{h}$.
Set $\tilde{f}':=g_{n+1}^{-1}\circ f \circ g_\Omega$,
which is additive, and for
all $b\in B$ and $\omega\in \Omega_{B/R}$
\[
 \tilde{f}'(b\omega)=g_{n+1}^{-1}\circ f(g_B(b)g_\Omega(\omega))=g_{n+1}^{-1}((h \circ g_B)(b)(f\circ g_\Omega )(\omega))=\tilde{h}(b)\tilde{f}'(\omega).
\]
Hence $\tilde{f}'$ is a morphism of $B$-modules (via $\tilde{h}$).
Moreover we have
\[
 \tilde{f}'\circ d =(g_{n+1}^{-1}\circ f \circ g_\Omega)\circ d =\tilde{D}.
\]
As $\tilde{f}$ is unique with this properties, it follows that $\tilde{f}=g_{n+1}^{-1}\circ f \circ g_\Omega$.
Let
\[
g_\Omega \otimes g_n : \Omega_{B/R}\otimes _{B,\tilde{h}} \mathcal{R}_n(A)\to \Omega_{B/R}\otimes _{B,{h}} \mathcal{R}_n(A)
\]
be the map given by sending $\omega\otimes_{B,\tilde{h}}a$ to $g_\Omega(\omega)\otimes_{B,{h}} g_n(a)$.
With this notation the $G$-action on $\mathcal{J}^ n_{X_\infty}$ is given as follows:
\begin{align*}
\Hom_{\mathcal{R}_n(A)}(\Omega_{B/R}\otimes _{B,h} \mathcal{R}_n(A), \mathcal{J}_n(A))&\to \Hom_{\mathcal{R}_n(A)}(\Omega_{B/R}\otimes _{B,\tilde{h}} \mathcal{R}_n(A), \mathcal{J}_n(A));\\
f&\mapsto g_{n+1}^{-1}\circ f\circ (g_\Omega \otimes g_n).
\end{align*}

\medskip
\noindent
\textit{The vector bundle structure.} 
We now explain the construction of the vector bundle $V_{X_\infty}^n$ representing $\mathcal{J}_{X_\infty}^n$,
see \cite[Chapter 4, 2.4.3]{CNS}.
We do this under consideration of the constructed group action.
Therefore, we restrict ourselves again to the case that $X_\infty=\Spf(B)$ is affine,
which implies that also $\Gr_n(X_\infty)=\Spec(C)$ and $X_0=\Spec(B_0)$ are affine.

As the $\mathcal{R}_{n+1}$-module structure of $\mathcal{J}_n$ factors through the quotient $\mathcal{R}_0$,
we actually have for every affine $\Gr_n(X_\infty)$-scheme $S=\Spec(A)$ with structure map $f$ that
\[
\mathcal{J}_{X_\infty}^n(S)=\Hom_{\mathcal{O}_S}(f^*{\theta_0^n}^*\Omega_{X_0/k},\mathcal{J}_n)=\Hom_{A}(\Omega_{B_0/k}\otimes_{B_0,h_0}A,\mathcal{J}_n(A)).
\]
Here $X_0=\Spec(B_0)$,
and $h_0: A\to B_0$ is the map corresponding to $\theta_0^n\circ f$.
For all $g\in G$,
denote by $g_B$ also the restriction of $g_B$ to $B_0$,
and by $g_\Omega$ also the restriction of $g_\Omega $ to $\Omega_{B_0/k}$. 
After restricting all involved maps,
the $G$-action on $\mathcal{J}^ n_{X_\infty}$ is given
by sending $f\in \mathcal{J}^n_{X_\infty}(S)$
to 
\[
g_{n+1}^{-1}\circ f\circ (g_\Omega\otimes \Id)\in \mathcal{J}^n_{X_\infty}(g_{\Gr_n}(S))=\Hom_A(\Omega_{B_0/k}\otimes _{B_0,g_B\circ {h_0}}A, \mathcal{J}_n(A)).
\]
Recall that
by Remark \ref{gaction Psi}, there is a $G$-equivariant isomorphism between the $A$-modules
$ \mathcal{J}_n(A)$ and $  V(\beta)\otimes_k {\mathcal{F}^R_A} _*(A)$.
Here
$\mathcal{F}^R_A$ is the identity if $R$ has equal characteristic,
and some power depending on $R$ of the absolute Frobenius on $A$ if $R$ has mixed characteristic,
and $V(\beta)$ is a one-dimensional vector space.
Moreover, the $G$-action on $V(\beta)\otimes_k {\mathcal{F}^R_A} _*(A)$ is given by automorphisms $ g_V\otimes \Id$ for all $g\in G$,
where $g_V$ a linear map on $V(\beta)$.
Using this $G$-equivariant isomorphism, we get that
\begin{align*}
 \mathcal{J}_{X_\infty}^n(S)&\cong\Hom_A(\Omega_{B_0/k}\otimes_{B_0,h_0}A,V(\beta)\otimes_k {\mathcal{F}^R_A} _*(A))\\
 &\cong\Hom_A(V(-\beta)\otimes_k\Omega_{B_0/k}\otimes_{B_0,h_0}A,{\mathcal{F}^R_A} _*(A)),
\end{align*}
where $V(-\beta)$ is the dual of $V(\beta)$.
We get from the first to the second line by using the isomorphism $l$, which
sends $f$ with
$f(\omega \otimes a)=f_V(\omega\otimes a)\otimes f_A(\omega\otimes a)$
to
$l(f)$ with 
$l(f)(v\otimes \omega \otimes a)=v(f_V(\omega\otimes a))f_A(\omega\otimes a)$.
Note that if the $G$-action on the last is given by sending
$\tilde{f}$ to $\tilde{f}\circ ({g_V^*}^{-1}\otimes g_\Omega \otimes \Id)$,
where $g_V^*$ is the dual morphism of $g_V$ for all $g\in G$, then $l$ is $G$-equivariant.

To get rid of the Frobenius, we pull back both sides via $\mathcal{F}^R_A$, and get that
\[
 \mathcal{J}_{X_\infty}^n(S)=\Hom_{A}(V(-\beta)\otimes_k \Omega_{X_0/k}^R\otimes_{B_0,h_0}A,A),
\]
with $\Omega^R_{X_0/k}:={\mathcal{F}_{X_0}^R}^*\Omega_{X_0/k}=\Omega_{X_0/k}\otimes_{B_0,\mathcal{F}^R_{B_0}}B_0$.
Denote by $g_\Omega$ also the automorphism of $\Omega_{X_0/k}^R$ we get by pulling back $g_\Omega$ via $\mathcal{F}^R_k$.
Using that
by \cite[Lemma~3.2.22.]{MR1917232} the absolute Frobenius commutes with morphism of schemes over $\mathbb{F}_p$,
it is also given by sending $b\omega$ to $g_B(b)g_\Omega(\omega)$.
Recall that we assumed that
$\Gr_n(X_\infty)=\Spec(C)$ for some $C$,
and denote by ${\tau: B_0\to C}$ the map induced by $\theta_0^n$,
and by  $c_A:C\to A$ the map induced by $f$.
For all $g\in G$, denote by $g_C$ the automorphism of $C$ induced by the $G$-action on $\Gr_n(X_\infty)$.
With this notation, the $G$-action on $\mathcal{J}_{X_\infty}^n$ is given as follows:
\begin{align*}
J_{X_\infty}^n(S)=&\Hom_{A}(V(-\beta)\otimes_k\Omega_{B_0/k}^R\otimes_{B_0,\tau} C \otimes_{C,c_A} A, A)\\
&\to J_{X_\infty}^n(g_{\Gr_n}(S))=\Hom_A(V(-\beta)\otimes_k \Omega_{B_0/k}^R\otimes _{B_0,\tau }C\otimes_{C, c_A\circ g_C} A, A);\\
f&\mapsto f\circ ({g_V^*}^{-1}\otimes g_\Omega\otimes g_C \otimes \Id).
\end{align*}
Note that we used that $\tau$ is actually $G$-equivariant.

As in \cite[Proposition~1.7.11]{MR0217084},
let $V$ be the contravariant functor sending a quasi-coherent $\mathcal{O}_{\Gr_n(X_\infty)}$-module $\xi$
to the affine $\Gr_n(X_\infty)$-scheme
$\Spec(\Sym(\xi))$, where $\Sym(\xi)$ is the symmetric $\mathcal{O}_{\Gr_n(X_\infty)}$-algebra,
see \cite[1.7.4]{MR0217084}.
Using this notation we get that
$ V_{X_\infty}^n=V(V(-\beta)\otimes_k\Omega^R_{B_0/k}\otimes_{B_0,\tau}C)$
represents $\mathcal{J}_{X_\infty}^n$.
As $X_\infty$ is smooth of relative dimension $m$ over $R$,
$\Omega^R_{B_0/k}$ is a locally free sheaf or rank $m$,
and hence the same holds for $ V(-\beta)\otimes_k\Omega^R_{B_0/k}\otimes_{B_0,\tau}C$. 
Thus $V_{X_\infty}^n$ is a vector bundle of rank $m$. 
Moreover, for every $g\in G$ the automorphism of the scheme $V_{X_\infty}^n$
is given by the automorphism ${g_V^*}^{-1}\otimes g_\Omega \otimes g_C$
on the corresponding $C$-module 
${V(-\beta)\otimes_k\Omega_{X_0/k}^R\otimes _{B_0,\tau}C}$.

\medskip
\noindent
\textit{The group action is affine.}
In order to check whether the considered action on $V_{X_\infty}^n$ is linear over the base,
we tensor $V_{X_\infty}^n$ with $\Gr_n(X_\infty)$ over $g_{\Gr_n}$ for every $g\in G$.
By \cite[Proposition~1.7.11]{MR0217084}
we have that 
\begin{align*}
g_{\Gr_n}^*(V_{X_\infty}^n)&= V_{X\infty}^n\times_{\Gr_n(X_\infty)}\Gr_n(X_\infty)=V(V(-\beta)\otimes \Omega^R_{B_0/k}\otimes_{B_0,\tau}C\otimes_{C,g_{C}} C)\\
&=V(V(-\beta)\otimes_k\Omega_{X_0/k}^R\otimes _{B_0,g_B}B_0\otimes_{B_0,\tau}C).
\end{align*}
Here we used again that $\tau$ is $G$-equivariant.
The induced map ${g':V_{X_\infty}^n\to g_{\Gr_n}^*(V_{X_\infty}^n)}$, see Definition \ref{affine bundle},
is given on the level of $C$-modules by
the map $\tilde{g}$ defined by ${\tilde{g}(v\otimes \omega\otimes b\otimes c)= {g_V^*}^{-1}(v)\otimes g_{\Omega}(\omega)\otimes b\otimes c}$. 
We now want to show that $\tilde{g}$ is a morphism of $C$-modules. Therefore it suffices to show that
\begin{align*}
\tilde{g}': V(-\beta)\otimes_k \Omega^R_{B_0/k}\otimes_{B_0,g_{B}}B_0&\to V(-\beta)\otimes_k\Omega^R_{B_0/k}\otimes_{B_0,\Id}B_0;\\
 v\otimes \omega\otimes b  &\mapsto {g_V^*}^{-1}(v)\otimes g_{\Omega}(\omega)\otimes b
\end{align*}
is a morphism of $B_0$-module.
Let $v_0\in V(-\beta)$ be a basis of this vector space. Hence for every element $v\in V(-\beta)$ there is a $\bar{v}\in k$ such that $v=\bar{v}v_0$.
Take now any $v_1=\bar{v}_1v_0,v_2=\bar{v}_2v_0\in V(-\beta)$, $\omega_1,\omega_2\in \Omega_{B_0/k}^R$, and $b_1,b_2\in B_0$.
Then we have
\begin{align*}
 \tilde{g}'(v_1\otimes_k \omega_1 &\otimes_{B_0,g_{B}}b_1 + v_2\otimes_k \omega_2 \otimes_{B_0,g_{B}}b_2)&\\
 &=  \tilde{g}'(v_0\otimes_k  (\bar{v}_1g_B^{-1}(b_1)\omega_1+ \bar{v}_2g_B^{-1}(b_2)\omega_2)\otimes_{B_0,g_{B}}1)\\
 &={g_v^*}^{-1}(v_0)\otimes_k g_\Omega(\bar{v}_1g_B^{-1}(b_1)\omega_1+ \bar{v}_2g_B^{-1}(b_2)\omega_2)\otimes_{B_0,\Id}1\\
 &={g_v^*}^{-1}(v_0)\otimes_k (\bar{v}_1b_1g_\Omega(\omega_1)+ \bar{v}_2b_2g_\Omega(\omega_2))\otimes_{B_0,\Id}1\\
 &=\tilde{g}'(v_1\otimes_k \omega_1 \otimes_{B_0,g_{B}}b_1) + \bar{g}'(v_2\otimes_k \omega_2 \otimes_{B_0,g_{B}}b_2).
\end{align*}
Hence $\tilde{g}'$ is additive. 
It is clear that $\tilde{g}'$ is multiplicative in $B_0$, hence it is a morphism of $B_0$-modules, and
thus $\tilde{g}$ is a morphism of $C$-modules.
Note that by \cite[1.7.14]{MR0217084}
a morphism of $C$-modules $\xi\to \xi'$,
corresponds to a morphism  of $C$-algebras
$\Sym(\xi)\to \Sym(\xi')$.
This implies that the corresponding maps of schemes between 
$V(\xi)=\Spec(\Sym(\xi))$ and $V(\xi')$ is a morphism of vector bundles.
To show this, one uses the construction of the vector bundle structure given in \cite[1.7.10]{MR0217084}.
Thus $\tilde{g}$ corresponds to a morphism of vector bundle, and hence
the action on $V_{X_\infty}^n$ is linear over that on $\Gr_n(X_\infty)$.
Hence altogether $\Gr_{n+1}(X_\infty)$ is an affine bundle over $\Gr_n(X_\infty)$ with translation space $V_{X_\infty}^n$ and affine $G$-action.

\end{proof}

\subsection{Greenberg schemes and equivariant morphisms of formal schemes}
\label{greenberg h}

Throughout this subsection, assume that $R$ has equal characteristic.
Moreover, fix a morphism ${h:Y_\infty\rightarrow X_\infty}$ of
flat $stft$ formal $R$-schemes, both of pure relative
dimension $m$ over $R$.

The aim of this section is to examine the induced map $\Gr_n(h)$ on the corresponding Greenberg schemes with respect to the induced $G$-action
constructed in Proposition \ref{Gaction}.
Before we can state the main result, we first need to introduce the order of the Jacobian of $h$,
as defined for example in \cite[Chapter 4, Definition~3.1.2]{CNS}.

\begin{defn}\label{def jac}
We define the \emph{Jacobian ideal} $\Jac_h\subset \mathcal{O}_{Y_\infty}$ as the $0$-th fitting ideal 
of the sheaf of relative differential forms $\Omega_{Y_\infty/X_\infty}$.
If $X_\infty$ and $Y_\infty$ are smooth over $R$,
${\Jac}_h$ is generated by the determinant of the map
\[
 h^*\Omega_{X_\infty/R}\to \Omega_{Y_\infty/R}.
\]
This holds, because if $X_\infty$ and $Y_\infty$ are both smooth over $R$,
then the modules of differentials are free of rank $d$,
hence the map above defines a free resolution of $\Omega_{Y_\infty/X_\infty}$.
For the general definition of fitting ideals we refer to \cite[Corollary-Definition~20.4]{MR1322960}
\end{defn}

\begin{defn}\label{def ordjac}
Let $y\in\Gr(Y_\infty)$ be any point with residue field $F$,
and
let $\psi$ be the corresponding element in $Y_\infty(R')$,
$R':=\mathcal{R}(F)$, which is a complete discrete valuation ring with residue field $F$
and ramification index one over $R$, see Remark~\ref{R(F)}.
Denote by $\xi: R'\to \mathbb{N}\cup \{\infty\}$ the valuation map,
and let $\psi(0)$ be the image of the unique point of $\Spf(R)$.
Then $\Ord(\Jac_h)$, the \emph{order of the Jacobian of $h$},
is the function sending a point $y\in \Gr(Y_\infty)$ to
\[
 \Ord(\Jac_h)(y)=\min\{\xi(\psi^* (f))\mid f\in ({\Jac_h})_{\psi(0)}\}.
\]
\end{defn}


\begin{rem}\label{compute ordjac}
Assume that $X_\infty$ and $Y_\infty$ are smooth over $R$.
Let $R'$ be a unramified extension of $R$ with
residue field $F$,
and fix a section $\psi$ in
$Y_\infty(R')$.
The canonical morphism
$h^*\Omega^m_{X_\infty/R}\rightarrow \Omega^m_{Y_\infty/R}$
induces a morphism of free rank one $R'$-modules
\[
\psi^*h^*\Omega^m_{X_\infty/R}\rightarrow \psi^*\Omega^m_{Y_\infty/R}.
\]
By definition of the Fitting ideal, this map is just multiplying with the generator $a$ of the fitting ideal of $\psi^*\Omega_{Y_\infty/X_\infty}$.
If $\psi$ corresponds to a point $y\in \Gr(Y_\infty)$ such that
$\Ord(\Jac_h)(y)=e$,
then $a$ has valuation $e$ by the definition of the order of $\Jac_h$, and hence
$e$ is equal to the length of the cokernel of this map.

If $Y_\infty$ is only generically smooth, $\psi^*h^*\Omega^m_{X_\infty/R}$
might have torsion elements.
In this case $e=\Ord(\Jac_h)(y)$ is the length of the cocernel of the map of free rank one $R'$-modules
$\psi^*h^*\Omega^m_{X_\infty/R}/(\text{torsion})\rightarrow \psi^*\Omega^m_{Y_\infty/R}$.
\end{rem}

\noindent
Using the order of the Jacobian of $h$, we have the following proposition describing the structure
of $\Gr_n(h)$ for big enough $n$:

\begin{prop}\label{structure h}
Let $R$ be a complete discrete valuation ring of equal characteristic with residue field $k$ containing all roots of unity,
endowed with a nice and tame action of a finite abelian group $G$.
Take $Y_\infty, X_\infty \in (stft/R,G)$ smooth over $R$,
and a $G$-equivariant morphism $h:Y_\infty \to X_\infty$ of formal schemes,
which is generically an open
embedding.

Take any $y\in \Gr(Y_\infty)$ with $\Ord(\Jac_h)(y)=e$,
and set $x_n:=\theta_n(\Gr(h)(y))$ in $\Gr_n(X_\infty)$ for some $n\geq 2e$.
Denote by $G_x\subset G$ the stabilizer of $x_n$,
and let $B_{x_n}$ be the reduced subscheme of $\Gr_n(h)^{-1}(x_n)$.
Then
\[
\Gr_n(h):B_{x_n}\rightarrow x_n
\]
is a $G_x$-equivariant affine bundle of rank
$e$ with affine
$G_x$-action.
\end{prop}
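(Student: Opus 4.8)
The plan is to produce an explicit local model, import the known non-equivariant affine bundle structure of the fiber, and then upgrade it to a $G_x$-equivariant statement by tracking the group action through the construction, in the same spirit as the proof of Proposition~\ref{prop structure map}. First I would use that the $G$-action on $Y_\infty$ and $X_\infty$ is good to choose a $G_x$-invariant affine open $\Spf(B')\subset Y_\infty$ containing the $G_x$-orbit of the closed point $\psi(0)$ of the arc $y$, and a $G_x$-invariant affine open $\Spf(B)\subset X_\infty$ containing its image; since $h$ is $G$-equivariant, the induced map $B\to B'$ is $G_x$-equivariant. On this model the non-equivariant case (the key lemma of motivic integration, see \cite{CNS}) applies: for $n\geq 2e$ the reduced fiber $B_{x_n}\to x_n$ is an affine bundle of rank $e$ whose translation space $E$ over $\kappa(x_n)$ is governed by the differential $\psi^*(dh)\colon \psi^*h^*\Omega_{X_\infty/R}\to \psi^*\Omega_{Y_\infty/R}$. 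Concretely, after choosing local coordinates and linearising $h_n$ around a lift of $x_n$, the differences of two lifts form the solution space of $J\eta\equiv 0$, where $J$ is the matrix of $\psi^*(dh)$; the elementary divisor decomposition $J\sim\mathrm{diag}(t^{e_1},\dots,t^{e_m})$ with $\sum_i e_i=e$ (using Remark~\ref{compute ordjac}) identifies this solution space with $\bigoplus_i t^{\,n+1-e_i}\mathcal{R}_n/t^{\,n+1}\mathcal{R}_n$, an $e$-dimensional space, and the hypothesis $n\geq 2e$ is exactly what guarantees that the higher order terms of the linearisation do not interfere.

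Next I would equip $E$ with a $G_x$-action and verify that the translation map $\varphi$ is $G_x$-equivariant. Since $G_x$ stabilises $x_n$ and $\Gr_n(h)$ is $G$-equivariant by Proposition~\ref{Gaction}, the action $\gamma\mapsto g_{Y_n}\circ\gamma\circ g_{\mathcal{R}_n}^{-1}$ preserves $B_{x_n}$; passing to differences of lifts yields for each $g\in G_x$ a well-defined automorphism $g_E$ of $E$, and the same bookkeeping as in Proposition~\ref{prop structure map}, where $g_{\mathcal{R}_n}^{-1}$ commutes with addition of derivations, shows that $\varphi$ is $G_x$-equivariant. The whole identification of $E$ is functorial in the $G_x$-equivariant data $\psi^*(dh)$ and in the module $\mathcal{J}_n$, so $g_E$ is induced by the $G_x$-equivariance of $dh$ together with the $G$-action on $\mathcal{J}_n$ recorded in Remark~\ref{action jn}.

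To prove that this action on $E$ is linear, and hence that the bundle structure is affine, I would invoke the explicit form of the $G$-action on $R$ available under the present hypotheses. Because $R$ has equal characteristic, $k$ contains all roots of unity, and $G$ is abelian and tame, Example~\ref{ex action on Jmn} allows me to fix a single uniformiser $t$ for which every $g\in G_x$ acts on $\mathcal{J}_n(A)$ by the $A$-linear map $\sum_i a_i t^i\mapsto \sum_i a_i\xi_g^{\,i}t^i$ with $\xi_g\in k$ a root of unity. This action is linear over the base, and since $\psi^*(dh)$ is $G_x$-equivariant while the identification of $E$ only rescales coordinates by powers of $t$, i.e.\ by roots of unity, the induced map $g_E$ on $E\cong\bigoplus_i t^{\,n+1-e_i}\mathcal{R}_n/t^{\,n+1}\mathcal{R}_n$ is $\kappa(x_n)$-linear. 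Transporting this through the symmetric algebra description of the representing vector bundle, exactly as in the last part of the proof of Proposition~\ref{prop structure map}, shows that $g_E$ is a morphism of vector bundles, so the $G_x$-action on $E$ is linear over $x_n$ and the affine bundle $B_{x_n}$ carries an affine $G_x$-action.

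The main obstacle is precisely this linearity check, and it is where the abelian and tame hypotheses become essential: the translation space is glued from the modules $\mathcal{J}_n$, and only the concrete root-of-unity description of the $G$-action on $R$, which requires $G$ abelian (so that one uniformiser works simultaneously for all $g\in G_x$), tame, and $k$ with all roots of unity in equal characteristic, ensures that the induced action is by scalars rather than by a more complicated, possibly non-linearisable, automorphism. A secondary point to treat with care is that the base arc $y$, hence the lift of $x_n$ chosen above, need not be $G_x$-fixed; this forces me to define $g_E$ intrinsically through differences of lifts rather than through a fixed base point, and to use that for $n\geq 2e$ the translation space is canonically independent of the lift chosen within the fiber.
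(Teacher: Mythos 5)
Your overall strategy coincides with the paper's: reduce to $G_x$-invariant affine models, describe the reduced fiber $B_{x_n}$ via derivations over $y_{n-e}:=\theta_{n-e}(y)$ as a torsor under a module $E$ determined by the elementary divisors of the map on differentials (equivalently by a decomposition $\gamma^*\Omega_{Y_\infty/X_\infty}\cong\oplus_i R'_{e_i}$), and then use the explicit root-of-unity description of the $G$-action on $\mathcal{J}^n_{n-e}$ from Example \ref{ex action on Jmn} --- which is exactly where the hypotheses abelian, tame, equal characteristic and all roots of unity enter --- to check that the induced action on $E$ is linear. Up to that point your outline matches the paper's proof of Proposition \ref{structure h}.

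The genuine gap is in how you pass from the torsor structure over $y_{n-e}$ to an affine bundle structure over $x_n$. The translation space $E$ is naturally a vector space over the residue field $F'$ of $y_{n-e}$, and the torsor map is a morphism over $y_{n-e}$, not over $x_n=\Spec(F)$; moreover the $G_x$-action you put on $E$ is a priori linear over the action on $F'$, whereas Definition \ref{affine bundle} requires a vector bundle over $x_n$ with an action linear over the action on $F$. Saying that the translation space is ``canonically independent of the lift'' does not resolve this: you still need to identify $F'$ with $F$ compatibly with the $G_x$-actions and to descend the torsor structure to $x_n$. The paper resolves this by producing a $G$-\emph{fixed} point $\tilde{y}_n\in B_{x_n}$ with residue field $F$: by Proposition \ref{prop structure map} and the tameness of the action, each stage of the tower $(\theta_n^{n+j})^{-1}(x_n)\to x_n$ on the $X_\infty$-side is an affine bundle with affine action and hence has a fixed point by \cite[Remark 3.7]{abi2}; iterating gives a fixed point $\tilde{x}_{n+e}$, whose fiber under $\Gr_{n+e}(h)$ is nonempty (surjectivity of $\Gr(h)$ plus constancy of $\Ord(\Jac_h)$ on connected components) and collapses by \cite[Lemme 7.2.2]{sebag1} to a single point $\tilde{y}_n\in B_{x_n}$, which is then fixed. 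Base-changing $E$ along $\tilde{y}_n$ produces an honest $F$-vector space with linear $G_x$-action and a $G_x$-equivariant trivialization $E\times_{y_{n-e}}\tilde{y}_n\cong B_{x_n}$. Your argument needs either this fixed-point step or a rigorous substitute; without it, the concluding claim that the $G_x$-action on $E$ is linear \emph{over $x_n$} and that $B_{x_n}$ carries an affine $G_x$-action is not justified.
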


\begin{proof}
Take any point $x_n=\Spec(F)\in \Gr_n(X_\infty)$ as in the claim with stabilizer $G_x$.
To simplify the notation, we assume that $G_x=G$,
hence $G$ acts in particular on the reduced subscheme $B_{x_n}$ of $\Gr_n(h)^{-1}(x_n)$.
In \cite[Lemme 7.2.2]{sebag1}, it was shown that $B_{x_n}\cong \mathbb{A}_F^e$.
In our proof, we use the construction of a concrete affine bundle structure
in 
\cite[Chapter 4, Theorem~3.2.2]{CNS} using derivations,
to construct a $G$-equivariant isomorphism of $F$-schemes  $\varphi: \mathbb{A}_F^e\to B_{x_n}$ such that the $G$-action on $\mathbb{A}^e_F$
is linear over the action on $x_n$.
Note that the steps in the construction  which do not concern the $G$-action are mainly taken from \cite[Chapter 4, Section~3]{CNS}.

\medskip
\noindent
\textit{Construction of $\varphi$.}
Set $x=\Gr(h)(y)$. Then $\theta_n(x)=x_n$.
For all $m\in \mathbb{N}$, set $x_m:=\theta_m(x)$ and $y_m:=\theta_m(y)$.
By \cite[Lemme 7.2.2]{sebag1}
every point in $B_{x_n}$
is mapped to $y_{n-e}$ under $\theta^n_{n-e}$. Hence $B_{x_n}$
is a closed subset of ${\theta_{n-e}^n}\!\!\!^{-1}(y_{n-e})$.
%

Note first that $\Gr_n(Y_\infty)$ only depends on $Y_n$, as well as
$\Gr_n(X_\infty)$ only depends on $X_n$.
Due to the local nature of the claim, 
we may replace $X_n$ by an affine $G$-invariant neighborhood $U$ of $x_0=\theta_0^n(x_n)\in \Gr_0(X_\infty)=X_0\subset X_n$,
which exists because the action of $G$ on $X_n$ is good. 
Moreover we may replace $Y_n$ by an affine subset $V$ containing $\theta_0^n(B_{x_n})=y_0\in \Gr_0(Y_\infty)=Y_0\subset Y_n$
of the intersection of $h^{-1}(U)$ and an affine subset of $Y_n$ containing $y_0$.
Such a $V$ exists due to \cite[Proposition~3.6.5]{MR1917232}.
Replacing $V$ by $\cap _{g\in G}g(V)$ we may assume that $V$ is $G$-invariant.
Hence from now on we assume that $Y_n=\Spec(B)$ and $X_n=\Spec(C)$ are affine.

We will now describe ${\theta_{n-e}^n}\!\!\!^{-1}(y_{n-e})$.
Let ${\gamma: \Spec( \mathcal{R}_{n-e}')\to Y_\infty}$ be the morphism corresponding to
$y_{n-e}=\Spec(F')\in\Gr_{n-e}(Y_\infty)$, where $R_{m}':=\mathcal{R}_m(F')$.
Consider the sheaf
\[
(\Sch_{F'}) \to (\text{Ab});\ (f:S\to \Spec(F'))  \mapsto \Hom_{\mathcal{O}_{h_{n-e}(S)}}(h_{n-e}(f)^*\gamma^*\Omega_{Y_\infty/R},\mathcal{J}_{n-e}^{n}),
\]
and denote it by $\mathcal{J}_{y_{n-e}}^{n,n-e}$.
We will now construct a $G$-action on $\mathcal{J}_{y_{n-e}}^{n,n-e}$.
This construction works analogously to that in the proof of Proposition~\ref{prop structure map},
so we will be rather short on this.
Again we may give maps only for affine $y_{n-e}$-schemes $S=\Spec(A)$.
Recall that $Y_n=\Spec(B)$ is affine, and denote by $\tau: B\to {R}_{n-e}'$ the morphism of rings corresponding to~$\gamma$.
With this notation we have 
\[
\mathcal{J}_{y_{n-e}}^{n,n-e}(S)=\Hom_{{R}_{n-e}'}(\Omega_{B/R}\otimes_{B,\tau}{R}_{n-e}',\mathcal{J}_{n-e}^n(A)).
\]
For every $g\in G$, let
$g_n\in \Aut(\mathcal{J}^n_{n-e}(A))$,
$g_{n-e}\in\Aut({R}_{n-e}')$,
and $g_B\in\Aut(B)$
be the corresponding automorphisms.
We can define 
a map $g_\Omega: \Omega_{B/R}\to \Omega_{B/R}$
sending $b'db$ to $g_B(b')d(g_B(b))$,
with $b,b'\in B$ and ${d:B\to \Omega_{B/R}}$ the canonical map.
Consider the action on $\mathcal{J}_{y_{n-e}}^{n,n-e}$ given by sending $ f\in \mathcal{J}_{y_{n-e}}^{n,n-e}(S)$ to
\begin{align*}
g_n ^{-1}\circ f\circ (g_\Omega \otimes g_{n-e})\ \in & \ \mathcal{J}_{y_{n-e}}^{n,n-e}(g_{\Gr}(S))\\
&=\Hom_{{R}_{n-e}'}(\Omega_{B/R}\otimes_{B,g_{n-e}^{-1}\circ\tau\circ g_B}{R}_{n-e}',\mathcal{J}_{n-e}^n(A))
\end{align*}
for all affine $y_{n-e}$-schemes $S=\Spec(A)$.
Here $g_{\Gr}$ denotes the automorphism of $y_{n-e}$ induced by the automorphism of $\Gr_{n-e}(Y_\infty)$
 corresponding to $g\in G$.
Exactly as done in the proof of Proposition~\ref{prop structure map} on can show that
doing so we get a well defined
 $G$-action on $\mathcal{J}_{y_{n-e}}^{n,n-e}$
 over the action on $y_{n-e}$.
 Note that we also have a map of sheaves
 \[
 \varphi_{Y_\infty}: \mathcal{J}_{y_{n-e}}^{n,n-e}\times_{y_{n-e}}{\theta_{n-e}^e}\!\!\!^{-1}(y_{n-e}) \to {\theta_{n-e}^e}\!\!\!^{-1}(y_{n-e}),
\]
making ${\theta_{n-e}^n}\!\!\!^{-1}(y_{n-e})$ a principal homogenous space, see \cite[Chapter 4, Proposition 2.4.2]{CNS}.
This map is constructed as $\varphi$ in Proposition \ref{prop structure map},
and with the same proof as there one can show that
$\varphi_{Y_{\infty}}$ is $G$-equivariant with the considered $G$-actions.

\medskip
\noindent
Note that
we can do the same construction also for ${\theta_{n-e}^n}\!\!\!^{-1}(x_{n-e})\subset \Gr_n(X_\infty)$.
Moreover,
we can define a
$G$-equivariant map
$i:  \mathcal{J}_{y_{n-e}}^{n,n-e} \to  \mathcal{J}_{x_{n-e}}^{n,n-e}$
as follows:
recall that $X_n=\Spec(C)$ and $Y_n=\Spec(B)$
are affine, and 
denote by $\psi:C\to B $ the ringmorphism corresponding to $h_n:=h\rvert_{Y_n}$.
Let
$i_\Omega: \Omega_{C/R}\to \Omega_{B/R}$ be the map given by sending $c'dc$
to $\psi(c')d(\psi(c))$. This map is $G$-equivariant for the considered actions on $\Omega_{C/R}$ and $\Omega_{B/R}$,
because $\psi$ is $G$-equivariant by assumption.
Let $S=\Spec(A)$ be again an affine $y_{n-e}$-scheme.
Then $i$ is given by sending
\begin{align*}
f&\in \mathcal{J}_{y_{n-e}}^{n,n-e}(S)=\Hom_{{R}_{n-e}'}(\Omega_{B/R}\otimes_{B,\tau}{R}_{n-e}',\mathcal{J}_{n-e}^n(A))\text{ to}\\
f\circ (i_\Omega \otimes \Id)&\in  \mathcal{J}_{x_{n-e}}^{n,n-e}(S)=\Hom_{{R}_{n-e}'}(\Omega_{C/R}\otimes_{C,\tau\circ \psi}{R}_{n-e}',\mathcal{J}_{n-e}^n(A)).
\end{align*}
Note that $\tau\circ \psi$ is the ring morphism corresponding to $h\circ \gamma: \Spec(R_{n-e}')\to X_\infty$,
which is corresponding to the point $x_{n-e}\in \Gr_{n-e}(X_\infty)$.
As $i_\Omega$ is $G$-equivariant, $i$ is $G$-equivariant for the considered $G$-actions.
Altogether we get the following commutative diagram.
\[
 \xymatrix{
 \mathcal{J}_{y_{n-e}}^{n,n-e}\times_{y_{n-e}} {\theta_{n-e}^e}\!\!\!^{-1}(y_{n-e})\ar[r]^{\ \ \ \ \ \ \varphi_{Y_\infty}} \ar[d]^{i\times \Gr_n(h)} & {\theta_{n-e}^n}\!\!\!^{-1}(y_{n-e})\ar[d]^{\Gr_{n}(h)} \\
 \mathcal{J}_{x_{n-e}}^{n,n-e}\times_{x_{n-e}}{\theta_{n-e}^n}\!\!\!^{-1}(x_{n-e}) \ar[r]^{\ \ \ \ \ \ \varphi_{X_\infty}}& {\theta_{n-e}^n}\!\!\!^{-1}(x_{n-e})
 }
\]
Note that all the maps are $G$-equivariant.
As $B_{x_n}\subset{\theta_{n-e}^n}\!\!\!^{-1}(y_{n-e}) $ is $G$-invariant
and mapped to the fixed point
 $x_n\in{\theta_{n-e}^n}\!\!\!^{-1}(x_{n-e}) $,
we can restrict this diagram to get the following diagram, which is  still $G$-equivariant:
\[
 \xymatrix{
 \mathcal{J}_{y_{n-e}}^{n,n-e}\times_{y_{n-e}} B_{x_n}\ar[r]^{\varphi_{Y_\infty}} \ar[d]^{i\times \Gr_n(h)} & {\theta_{n-e}^n}\!\!\!^{-1}(y_{n-e})\ar[d]^{\Gr_{n}(h)} \\
 \mathcal{J}_{x_{n-e}}^{n,n-e}\times_{x_{n-e}} x_n \ar[r]^{\varphi_{X_\infty}}& {\theta_{n-e}^n}\!\!\!^{-1}(x_{n-e})
 }
\]

\medskip
\noindent
Note that a point of the scheme $\mathcal{J}_{x_{n-e}}^{n,n-e}\times_{y_{n-e}} B_{x_n}$ lies in the inverse image of $B_{x_n}$ in $ {\theta^n_{n-e}}\!\!\!^{-1}(y_{n-e})$
if and only if it is mapped to $(0,x_n)$ by $i\times \Gr_n(h)$.
Hence in order to describe $B_{x_n}$, we need to describe the kernel of $i$.
Denote the corresponding subsheaf of $\mathcal{J}_{y_{n-e}}^{n,n-e}$ by $E$.
Note that $f\in \mathcal{J}^{n,n-e}_{y_{n-e}}(A)$ lies in the kernel of $i$
if and only if for all $c\in \psi(C)\subset B$ we have that $f(dc)=0$.
Hence we get 
\[
E(S)=\Hom_{{R}_{n-e}'}(\Omega_{B/C}\otimes_{B,\tau}{R}_{n-e}',\mathcal{J}_{n-e}^n(A))
\]
for all affine $y_{n-e}$-schemes $S=\Spec(A)$.
Consider now the map
$q_\Omega$ which maps $b'db\in \Omega_{B/R}$ to $b'db\in \Omega_{B/C}$.
 The inclusion map $E\hookrightarrow \mathcal{J}_{y_{n-e}}^{n,n-e}$ is given by sending $f\in E(S)$
 to $f\circ q_\Omega\otimes \Id$ for all $y_{n-e}$-schemes $S$.
 
Consider the $G$-action on $\Omega_{B/C}$
given by sending $b'db$ to $g_B(b')d(g_B(b))$ for all $g\in G$.
Denote these maps also by $g_\Omega$.
These are well defined, because $\psi$ is $G$-equivariant.
By construction, $q_\Omega$ is
$G$-equivariant with the considered $G$-actions.
Let $G$ act on $E$ by sending for all $g\in G$, $f\in E(S)$
to 
$g_n^{-1}\circ f \circ (g_\Omega \otimes g_{n-e})$ in $E(g_{\Gr}(S))$.
Then the inclusion $E\hookrightarrow \mathcal{J}_{y_{n-e}}^{n,n-e}$ is $G$-equivariant with this $G$-action,
thus like this we can describe the induced $G$-action on $E$ over the action on $y_{n-e}$.

\medskip
\noindent
Next we observe that there is always a fixed point $\tilde{y}_{n}=\Spec(F)$ in $B_{x_n}$:
by Proposition~\ref{prop structure map}, ${\theta_n^{n+1}:{\theta^{n+1}_n}^{-1}(x_n)\to x_n}$ is an affine bundle over $x_n=\Spec(F)$
with affine $G$-action.
Hence by \cite[Remark 3.7]{abi2}, there exists a point ${\tilde{x}_{n+1}=\Spec(F)}$ in ${\theta^{n+1}_n}^{-1}(x_n)$
which is fixed by the action of $G$.
To get this fixed point, one needs to assume that the action of $G$ is tame.
Using an induction argument, we get a fixed point
$\tilde{x}_{n+e}=\Spec(F)$ in ${\theta^{n+e}_n}^{-1}(x_n)\subset\Gr_{n+e}(X_\infty)$.

Restricting $X_\infty$ to a suitable open formal subscheme,
we may assume that $h$ is generically an isomorphism,
hence by \cite[Lemma 2.4.1]{MR2885338}, $\Gr(h)$ is surjective.
As in addition the truncation maps are surjective, because $Y_\infty$ and $X_\infty$ are smooth,
$\Gr_{n+e}(h)^{-1}(\tilde{x}_{n+e})$ is not empty.
By \cite[Chapter 4, 3.2.4]{CNS}, $\Ord(\Jac_h)$ is constant on connected components of $Y_\infty$,
which implies that $\tilde{x}_{n+e}$ also lies in the image of a point $\tilde{y}\in \Gr(Y_\infty)$ with $\Ord(\Jac_h)(\tilde{y})=e$.
Hence by \cite[Lemme~7.2.2]{sebag1}, $\Gr_{n+e}(h)^{-1}(\tilde{x}_{n+e})$ is mapped to exactly one point $\tilde{y}_{n}=\Spec(F)$,
which lies in $B_{x_n}\subset \Gr_n(Y_\infty)$.
As $\tilde{x}_{n+e}$ is a fixed point and $\Gr_{n+e}(h)$ is $G$-equivariant,
the action of $G$ on $\Gr_{n+e}(Y_\infty)$ restricts to
$\Gr_{n+1}(h)^{-1}(\tilde{x}_{n+1})$.
As moreover $\theta_n^{n+e}$ is $G$-equivariant, $\tilde{y}_{n}$ is a fixed point.

Now we can restrict $\varphi_{Y_\infty}$ to $E\times_{y_{n-e}}\tilde{y}_n$,
and get a $G$-equivariant morphism $\varphi: E\times_{y_{n-e}}\tilde{y}_n\to B_{x_n}$ over $x_n=\Spec(F)$,
which is an isomorphism, because $\varphi_{Y_\infty}$ is a formally principal homogeneous space.
Note that if $E$ is isomorphic to $\mathbb{A}^e_{F'}$ and the action on $E$ is linear over the action on $y_{n-e}=\Spec(F')$,
then $E\times_{y_{n-e}}\tilde{y}_n$ is isomorphic to $\mathbb{A}^e_{F}$ and the action on $E\times_{y_{n-e}}\tilde{y}_n$ is linear over the action on $\tilde{y}_{n}=\Spec(F)$.
Hence from now on we assume that $y_{n-e}\cong \tilde{y}_n=\Spec(F)$, i.e.~in particular $F=F'$,
and $E=E\times_{y_{n-e}}\tilde{y}_n$.

\medskip
\noindent
 \textit{The vector space structure of $E$.}
Now we recall the construction of the isomorphism $E\to \mathbb{A}_F^e$
from \cite[Chapter~4, Theorem~3.2.2]{CNS}.
As $E$ is isomorphic to $B_{x_n}$ which is reduced,
it suffices to give this isomorphism onreduced affine $F$-schemes $S=\Spec(A)$.
Let $\gamma: \Spec(R_{n-e}')\to Y_\infty$
be again the section corresponding to $y_{n-e}\in \Gr_{n-e}(Y_\infty)$.
As $\Ord(\Jac_h)(y)=e$,
it follows 
that $\gamma^*\Omega_{Y_\infty/X_\infty}$
is a $R_{n-e}'$-module of length $e$.
Hence we can fix an isomorphism of $R_{n-e}'$-modules
\begin{align}\label{j}
 j: \gamma^*\Omega_{Y_\infty/X_\infty}\to \oplus_{i=1}^r R_{e_i}'
\end{align}
with $e_1,\dots,e_r\in\{0,\dots, n-e\}$ such that
$e_1+\dots+e_r=e-r+1$.
Hence 
\[
E(S)=\Hom_{R'_{n-e}}(\oplus_{i=1}^r R_{e_i}',\mathcal{J}_{n-e}^n(A)),
\]
and there is a canonical isomorphism of $R'_{n-e}$-modules
\begin{align*}\label{can iso}
 \Hom_{R'_{n-e}}(\oplus_{i=1}^rR_{e_i}',\mathcal{J}_{n-e}^n(A))\to \oplus_{i=1}^r\mathcal{J}^n_{n-e_i}(A),
\end{align*}
as long as $A$ is reduced, which we are assuming.
For every $i$, let $l_i$ be a generator of $R_{e_i}'$ as an $R_{n-e}'$-module.
Then this isomorphism is given by sending an $f\in  \Hom_{R'_{n-e}}(\oplus_{i=1}^rR_{e_i}',\mathcal{J}_{n-e}^n(A))$
characterized by the images $f(l_i)$ of the $l_i$ in $\mathcal{J}^n_{n-e_i}(A)$,
to $(f({l}_1),\dots,f(l_r))$ in $\oplus_{i=1}^r\mathcal{J}_{n-e_i}^n(A)$.
As we assume that $R$ has equal characteristics, we get, as explain in
Section \ref{ideal scheme}, for every
choice of a uniformizer $t\in R':=\mathcal{R}(F)$ that
\[
\mathcal{J}_{n-e_i}^n(A)=\{a_{i1}t^{n-e_i+1}+\dots + a_{ie_i}t^n\mid a_{ij}\in A\},
\]
see Formula (\ref{structure Jmn}).
This determines a functorial bijection
\[
 i(A): \oplus_{i=1}^r J_{n-e_i}^n(A)\to \oplus_{i=1}^rA^{e_i}\cong A^e\cong \Hom_F(F[x_1,\dots,x_e], A).
\]
As explained in Example \ref{ex action on Jmn},
one can chose $t$ such that for every $g\in G$ there exists a root of unity $\xi\in k\subset F$,
such that the induce automorphism $g_n\in \Aut(\mathcal{J}_{n-e_i}^n(A))$ is given by
\begin{align}\label{action jnA}
 g_n(a_{i1}t^{n-e_i+1}+ \dots + a_{ie_i}t^n)=a_{i1}\xi^{n-e_i+1}t^{n-e_i+1}+ \dots + a_{ie_i}\xi^nt^n.
\end{align}
Here we need that $k$ contains all roots of unity, and that $G$ is abelian.
We fix a $t$ such that Equation (\ref{action jnA}) holds, and
thus an isomorphism
$E\to \mathbb{A}_F^e$.
%

\medskip
\noindent
\textit{The action on $E$.}
Note that
$y_{n-e}$, given by a map $i:\Spec(F)\to \Gr_{n-e}(Y_\infty)$,
is mapped to $i\circ g_{\Gr}: \Spec(F)\to \Gr_{n-e}(Y_\infty)$ for all $g\in G$.
Here $g_{\Gr}$ denotes again the automorphism of  $\Spec(F)$ corresponding to $g$.
Denote by $g_F$ the corresponding automorphism of $F$,
which induces an automorphism of $R_{n-e}'$ given by
\[
 {g}_{F,n-e}: R_{n-e}'=R_{n-e}\otimes_k F\to R_{n-e}';\ r\otimes f\mapsto r\otimes g_F(f).
\]
This map is well defined as $g_F$ is a morphism over $k$.
Hence ${\gamma: \Spec(R_{n-e}')\to Y_\infty}$ corresponding to the point $y_{n-e}$
gets mapped to ${\gamma \circ \tilde{g}_{F,n-e}}$.
Here $\tilde{g}_{F,n-e}$ is the automorphism of $\Spec(R_{n-e}')$ induced by $g_{F,n-e}$.
So on the level of rings we have,
using the concrete construction of the group action on the points of $\Gr_{n-e}(Y_\infty)$,
that $g_{n-e}^{-1}\circ \tau \circ g_B= g_{F,n-e} \circ \tau$.
One computes that
\begin{align*}
 R_{n-e}'\otimes_{R_{n-e}',g_{F,n-e}}R_{n-e}' &= R_{n-e}\otimes_kF \otimes_{R_{n-e}\otimes_k F,\Id\otimes g_F}R_{n-e}\otimes_k F\\
 & = R_{n-e}\otimes_kF \otimes_{ F,g_F} F= R_{n-e}'\otimes_{F,g_F}F,
\end{align*}
hence
\begin{align*}
\Omega_{B/C}\otimes_{B,g_{n-e}^{-1}\circ\tau\circ g_B}{R}_{n-e}'\otimes_{F, g_{F}^{-1}}F &=\Omega_{B/C}\otimes_{B,\tau}{R}_{n-e}'\otimes_{R_{n-e}',g_{F,n-e}}F\otimes_{F, g_{F}^{-1}}F\\
&\cong \Omega_{B/C}\otimes_{B,\tau}{R}_{n-e}'.
\end{align*}
This implies that
\begin{align*}\label{tensor a}
E(g_{Gr}(S))&=\Hom_{R_{n-e}'}(\Omega_{B/C}\otimes_{B,g_{n-e}^{-1}\circ\tau\circ g_B}{R}_{n-e}',\mathcal{J}_{n-e}^n(A)) \nonumber \\
&=\Hom_{{R}_{n-e}'}(\Omega_{B/C}\otimes_{B,\tau}{R}_{n-e}',\mathcal{J}_{n-e}^n(k)\otimes_kA\otimes_{F,g_{F}^{-1}}F)\nonumber \\
&=\Hom_{R_{n-e}'}(\oplus_{i=1}^ rR_{e_i}',\mathcal{J}_{n-e}^n(A\otimes_{F,g_F^{-1}}F)).
\end{align*}
In the last line we used again the isomorphism from Equation (\ref{j}).
Note that if the action on $\Spec(F)$ is trivial, we have that $A\otimes_{F,g_F {-1}}F=A$ as modules over $F$.
With this notation,
the action of $G$ is given by sending $f\in E(S)$ to $\tilde{f}\in E(g_{\Gr}(S))$
with
\begin{align*}
\tilde{f}(\omega\otimes r \otimes s )&= g_{n}^{-1}( f (g_\Omega(\omega) \otimes g_{n-e}(r)\otimes g_{F}(s)))\otimes 1\\
&= g_{n}^{-1}( f (g_\Omega(\omega) \otimes g_{n-e}(r)\otimes 1))\otimes s
\end{align*}
 for all $g\in G$.
Let $l_1,\dots, l_r$ be as before generators of $\gamma^*\Omega_{Y_\infty/X_\infty}\cong \oplus_{i=1}^rR_{e_i}(F)$.
Note that ${g_\Omega\otimes g_{n-e}\otimes g_F}$ sends $l_i$
to $\sum_{j=1}^rc_{ij}l_j$ for some $c_{ij}\in R_{n-e}'$.
Using that $f \in E(S)$ is a morphism over $R_{n-e}'$,
$f$ is mapped to
$\tilde{f}$ with 
\[
\tilde{f}(l_i)=g_n^{-1}\circ f(\sum_{j=1}^rc_{ij}l_j)=g_n^{-1}( \sum_{j=1}^rc_{ij}f({l}_j))=\sum_{j=1}^r g_n^{-1}(c_{ij})g_n^{-1}(f({l}_j)).
\]
Here $\tilde{f}(l_i)$ is an element in $\mathcal{J}^{n}_{n-e}(A\otimes_{F,g_F^{-1}} F)$.
To simplify the notation, we do not indicate that.
Using the explicit description of $\mathcal{J}^n_{n-e_i}(A)$ from above,
 we can write ${f(l_j)=m_{j1}t^{n-e_j+1}+\dots+m_{je_j}t^n}$ with $m_{jl}\in A$.
Here $t$ is the uniformizer of $R$ we fixed before.
Moreover we have 
$c_{ij}=c_{ij0}t^{0}+\dots+c_{ijn-e}t^{n-e}$, with $c_{ijk}\in F$.
Hence for all $i\in \{1,\dots,r\}$ we get
\begin{align*}
\tilde{f}(l_i)
=g_{n}^{-1}(\sum_{j=1}^r((\sum_{k=0}^{n-e}c_{ijk}t^k)(\sum_{l=1}^{e_j}m_{jl}t^{n-e_j+l}))).
\end{align*}
Using that we are actually computing in $\mathcal{J}_{n-e_i}^n(A)$,
we get
\begin{align*}
\tilde{f}(l_i)= g_{n}^{-1}(\sum_{j=1}^r(\sum_{s=1}^{e_i}(\sum_{l=1}^{e_j}{c}_{isjl}m_{jl})t^{n-e_i+s}) )
= g_{n}^{-1}(\sum_{s=1}^{e_i}(\sum_{j=1}^{r}\sum_{l=1}^{e_j}{c}_{isjl}m_{jl})t^{n-e_i+s}),
\end{align*}
where ${c}_{isjl}:=c_{ij(s-l+e_j-e_i)}$ if that is defined by the equation for $c_{ij}$,
and $0$ otherwise.
Using Equation (\ref{action jnA}) we get
\begin{align*}
\tilde{f}(l_i)
= \sum_{s=1}^{e_i}(\sum_{j=1}^{r}\sum_{l=1}^{e_j}{c}_{isjl}\xi^{-(n-e_i+s)}m_{jl})t^{n-e_i+s}
\end{align*}
for some root of unity $\xi\in k\subset F$.
Hence $\tilde{c}_{isjl}:={c}_{isjl}\xi^{-(n-e_i+s)}$ lies in $F$, and
\[
 L_{is}(m_{11},\dots,m_{re_r}):=\sum_{j=1}^{r}\sum_{l=1}^{e_j}\tilde{c}_{isjl}m_{jl}
\]
defines a linear form over $F$. Set $L_{e_1+\dots+e_{i-1}+j}:=L_{ij}$.

\medskip
\noindent
Analogously to $E(S)$,  we can now identify $E(g_{\Gr}(S))$ with
\[
\oplus_{i=1}^r \mathcal{J}_{n-e_i}^n(A\otimes_{F,g_F^{-1}}F)
\cong(A\otimes_{F,g_F^{-1}}F)^e\cong \Hom(F[x_1,\dots,x_e],A\otimes_{F,g_F^{-1}}F).
\]
%
%
Then the induced map on 
$E(S)=\Hom(F[x_1,\dots,x_e],A)$
sends
$f$ with $f(x_i)=a_i$
to 
\[
\tilde{f}\in E(G_{\Gr}(S))=\Hom(F[x_1,\dots,x_e],A\otimes_{F,g_F^{-1}}F)
\]
with $\tilde{f}(x_i)=L_i(a_1,\dots,a_l)$ for all $A$.
Hence the induced automorphism of $\mathbb{A}_F^e\cong E$
is given on ring level by sending $\sum a_{n_1\dots n_e}x_1^{n_1}\dots x_e^{n_e}\in F[x_1,\dots,x_e]$
to 
\[
\sum g_F(a_{n_1\dots n_e})L_1(x_1,\dots,x_e)^{n_1}\dots L_e(x_1,\dots x_e)^{n_e}.
\]
One observes that this map is linear over the map on $\Spec(F)$.

Altogether this means that $B_{x_n}\cong \mathbb{A}^e_F$ and the action on it is linear over the action on $F$,
hence $B_{x_n}$ is an affine bundle of rank $e$
with affine $G$-action.
\end{proof}

\begin{rem}
 If $G$ is not abelian, Proposition \ref{structure h} is probably still true, 
 but we need this assumption to get the explicit action of $G$ on $\mathcal{J}_{n-e_i}^n(A)$.
 The assumption that $G$ is abelian will also be used to prove Lemma \ref{lemma local global}.
 As we will only consider abelian groups for the applications in Section \ref{eq poincare series} and Section \ref{applications}, it seems to be reasonable to restrict to this case.
\end{rem}

\begin{rem}\label{mixed wrong}
As explained in \cite[Section 2.4]{MR2885338},
Proposition \ref{structure h} can not be shown if $R$ has mixed characteristic, even if the $G$-action is trivial.
If one assumes that $F$ is perfect, one gets Proposition \ref{structure h} without G-action, see \cite[Lemma~2.4.4]{MR2885338}.
In addition to the problems in the non-equivariant case,
we need that $R$ has equal characteristic to describe $\mathcal{J}_n^m(A)$ and the action on it explicitly.
 \end{rem}
 
\section{Equivariant motivic integration}
\label{motivic integration}
\noindent
The aim of this section is to establish motivic integration on formal schemes with an action of a finite group $G$,
which will have values in an equivariant Grothendieck ring of varieties, see Section \ref{eq grothendieck}.
The main result in this section is the change of variables formula for this equivariant motivic integrals, Theorem \ref{changevar}.

\subsection{The equivariant Grothendieck ring of varieties}
\label{eq grothendieck}
\noindent
Let $S$ be any separated scheme,
endowed with a good action of a finite group $G$. 

\begin{defn}\label{equivariant Gring}
The \emph{equivariant Grothendieck ring of $S$-varieties}
$K_0^G(\Var_S)$ is defined as follows: as an abelian group, it is
generated by isomorphism classes $[X]$ of elements $X\in(\Sch_{S,G})$. These generators are subject to the following
relations:
\begin{enumerate}
\item $[X]=[Y]+[X\setminus Y]$, whenever $Y$ is a closed $G$-equivariant 
subscheme of $X$ (scissors relation).
\item $[V]=[W]$, whenever 
$V\rightarrow B$ and $W\to B$ are two
 $G$-equivariant affine bundles of rank $r$ over $B$ with affine $G$-action, see Definition \ref{affine bundle}.
\end{enumerate}
For all $X,Y\in (\Sch_{S,G})$, set 
$[X][Y]:=[X\times_S Y]$, where
 the fiber product is taken in $(\Sch_{S,G})$.
 This product extends bilinearly to
 $K_0^G(\Var_S)$ and makes it into a ring.
 
 Let $\LL_S$ be
 the class of the affine line $\A^1_S$ with $G$-action induced by the action on $S$,
 and the trivial action on the affine line.
 We define $\mathcal{M}^G_S$ as the localization
 $K_0^G(\Var_S)[\LL_S^{-1}]$.
  \end{defn}
  
  \noindent
  For a  discussion of the different definitions of the equivariant Grothendieck ring of varieties in the literature, we refer to \cite[Chapter 4]{abi2}.
  
 \begin{nota}
  If $G$ is the trivial group, we write
 $K_0(\Var_S)$ and $\mathcal{M}_S$ instead of $K_0^G(\Var_S)$ and
 $\mathcal{M}^G_S$, receptively.
 Note that in this case Relation (2) becomes trivial.
 If $S=\Spec(A)$,
 we write $K_0^G(\Var_A)$ for $K_0^G(\Var_S)$, $\mathbb{L}_A$ for $\mathbb{L}_S$, and $\mathcal{M}_A^G$ for $\mathcal{M}_S^G$.
   If the base scheme $S$ is clear from the
 context, we write $\LL$ instead of $\LL_S$.
 \end{nota}

\begin{rem}\label{remark}
A morphism of finite groups $G'\rightarrow G$ induces forgetful
ring morphisms
$K^{G}_0(\Var_S)\rightarrow K^{G'}_0(\Var_{S})$
and
$\mathcal{M}^{G}_S\rightarrow \mathcal{M}^{G'}_{S}$.
 If
 $G'\rightarrow G$ is surjective, then these morphisms are
 injections.
\end{rem}

\begin{defn}
 Let $S$ be a separated scheme with an action of a profinite group
 \[
 \widehat{G}=\lim_{\stackrel{\longleftarrow}{i\in I}} G_i
 \]
 factorizing through a good action of some finite quotient $G_j$.
 Then we define
 $$K_0^{\widehat{G}}(\Var_S):=\lim_{\stackrel{\longrightarrow}{i\in
 I}}K_0^{G_i}(\Var_S)\ \mathrm{and}\ \mathcal{M}^{\widehat{G}}_{S}:= \lim_{\stackrel{\longrightarrow}{i\in
 I}}\mathcal{M}^{G_i}_{S}.$$
 \end{defn}

\begin{rem}\label{class constructible}
Take $X\in (\Sch_{S,G})$,
and let $C\subset X$ be a constructable subset, closed under the action of $G$.
Then $C$ defines an element in $K^G_0(\Var_S)$.

To see this,
take a generic point $\eta\in C$, and let $\bar{\eta}$ be its closure in $X$.
As $\eta\in C$, there exists an open  $U\subset \bar{\eta}$ containing $\eta$ such that $U\subset C$.
The orbit $G(\bar{\eta})$ of $\bar{\eta}$ is a closed $G$-invariant subscheme of $X$.
Shrinking $U$ a bit, we may assume that $U$ is also open in $G(\bar{\eta})$.
As for all $g\in G$ the induced map on $G(\bar{\eta})$ is an isomorphism, $C_1:=\cup_{g\in G}g(U)$ is open in $G(\bar{\eta})$,
hence it defines in particular an element in $K_0^G(\Var_S)$.
As $C$ is $G$-invariant, $C_1$ is contained in $C$.
Using Notherian induction on $C\setminus C_1$ the claim follows.
\end{rem}
 
 \begin{rem}\label{compute [V]}
 Note that the trivial bundle $\mathbb{A}^r_S\times_S{B}=\mathbb{A}_B^r\to B$
with the group action induced by that on $B$ and the trivial one on the affine space
is an affine bundle of rank $r$ over $B$
with affine $G$-action.
From this it follows with the second relation in the definition of the equivariant Grothendieck ring 
that for every $G$-equivariant affine bundle $V\to B$ of rank $r$   
with affine $G$-action
\[
 [V]=[B]\mathbb{L}_{S}^r\in K_0^G(\Var_S).
\]
 \end{rem}
 
\noindent
We show now that this formula also holds with less assumptions if $G$ is abelian.
 
 \begin{lem}\label{lemma local global}
Assume that $G$ is abelian.
Take $J,I\in (\Sch_{S,G})$, and let $h: J\to I$ be a $G$-equivariant morphism.
For all $x\in I$ denote by $G_x$ the stabilizer of $x$, and by $J_x$ the underlying reduced subscheme of $h^{-1}(x)$,
on which we get an induced action of $G_x$.
Assume that for all $x\in I$, $J_x$ is a $G_x$-equivarinat affine bundle of rank $e$ with affine $G_x$-action.
Then
\[
  [J]=[I]\mathbb{L}_{S}^e\in K_0^G(\Var_S).
\]
\end{lem}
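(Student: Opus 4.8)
The plan is to prove the identity by Noetherian induction on $I$, at each stage peeling off a $G$-invariant open subscheme over which the fiberwise hypothesis can be upgraded to a genuine global $G$-equivariant affine bundle with affine $G$-action, so that Remark \ref{compute [V]} applies directly. First I would reduce to $I$ and $J$ reduced: the underlying reduced subscheme is a $G$-equivariant closed immersion with empty complement, so $[J]=[J_{\mathrm{red}}]$ and $[I]=[I_{\mathrm{red}}]$, and since the fibers $J_x$ are reduced by definition the hypotheses are unaffected. The entire content is then the claim that there exists a nonempty $G$-invariant open $U\subseteq I$ such that $h^{-1}(U)_{\mathrm{red}}\to U$ is a $G$-equivariant affine bundle of rank $e$ with affine $G$-action in the sense of Definition \ref{affine bundle}. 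Granting this, the result is formal: $h^{-1}(U)$ and $h^{-1}(I\setminus U)$ are complementary $G$-invariant subschemes, so the scissors relation (relation~(1) of Definition \ref{equivariant Gring}) gives $[J]=[h^{-1}(U)]+[h^{-1}(I\setminus U)]$; by Remark \ref{compute [V]} the first summand is $[U]\mathbb{L}_S^e$, while $I\setminus U$ is a proper closed $G$-invariant subscheme over which the hypotheses are inherited, so the inductive hypothesis gives $[h^{-1}(I\setminus U)]=[I\setminus U]\mathbb{L}_S^e$, and summing yields $[J]=([U]+[I\setminus U])\mathbb{L}_S^e=[I]\mathbb{L}_S^e$.

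To produce such a $U$, I would first exploit the abelian hypothesis to reduce to constant stabilizer. For each subgroup $H\le G$ the fixed locus $I^H$ is closed, and since $G$ is abelian $g\cdot I^H=I^{gHg^{-1}}=I^H$, so each stratum $I_H=\{x:G_x=H\}$ is $G$-invariant and locally closed. Moreover, if $\eta$ is a generic point of a component $Z$ and $g\in G_\eta$, then the closed set $I^{\langle g\rangle}$ contains $\eta$ and hence all of $Z$; thus $H:=G_\eta$ acts trivially on $Z$, and by abelianness the whole $G$-orbit of $Z$ consists of components with generic stabilizer $H$. Removing the remaining components and the proper closed locus where the stabilizer jumps, I obtain a nonempty $G$-invariant open subset on which $G$ acts with constant stabilizer $H$, so that $H$ acts trivially and $G/H$ acts freely; it then suffices to find the desired bundle structure over a dense open of this locus.

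It remains to spread out the pointwise data to a global structure. Over a $G$-invariant dense open the pointwise translation spaces of the affine bundles $J_x$ assemble into a rank-$e$ vector bundle $\mathcal{E}$, and I would use the $G$-equivariance of $h$ to transport the fiberwise $H$-linearizations along $G$-orbits into a $G$-linearization of $\mathcal{E}$ over the action on the base, together with a $G$-equivariant torsor map $\varphi\colon \mathcal{E}\times_U h^{-1}(U)_{\mathrm{red}}\to h^{-1}(U)_{\mathrm{red}}$; here I would use the concrete diagonal form of the $H$-action through roots of unity exactly as in the proof of Proposition \ref{structure h}, which is where commutativity of $G$ enters essentially. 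After shrinking to a $G$-invariant dense open $U$ on which $\mathcal{E}$, its linearization and $\varphi$ are all defined and the linearity-over-the-base condition of Definition \ref{affine bundle} holds, the structure of $h^{-1}(U)_{\mathrm{red}}\to U$ as a $G$-equivariant affine bundle with affine $G$-action is established.

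I expect this spreading-out step to be the main obstacle: the passage from the \emph{pointwise} affine-bundle-with-affine-action data to a \emph{global} one over a dense open, including the construction of the equivariant translation bundle, its $G$-linearization, and the torsor map, together with the generic verification of affineness of the action. The delicate point is that $G/H$ acts freely on the base while $H$ may act on the fibers with fixed points, so the global $G$-structure cannot be read off from a naive quotient and must instead be assembled by transporting the fiberwise $H$-data around $G$-orbits — which is exactly the point where the abelian hypothesis, via the explicit root-of-unity description of the action, is indispensable.
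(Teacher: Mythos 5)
Your overall skeleton matches the paper's: Noetherian induction on $I$, producing over a dense $G$-invariant (locally closed) piece a genuine $G$-equivariant affine bundle with affine action, then concluding via the scissors relation and Remark \ref{compute [V]}. But the step you yourself flag as ``the main obstacle'' is exactly the content of the lemma, and your sketch of it does not work as stated. You assert that ``the pointwise translation spaces of the affine bundles $J_x$ assemble into a rank-$e$ vector bundle $\mathcal{E}$'' over a dense open; the hypothesis gives an affine bundle structure on each fiber \emph{separately}, with no compatibility whatsoever between different fibers, so there is nothing to assemble. The paper's resolution is to use the data at a \emph{single} generic point $\eta$ only: the translation space $E\cong\mathbb{A}^e_F$, the torsor map $\varphi$, and the linearization of the $G_\eta$-action are spread out from $\eta$ to a $G_\eta$-invariant affine neighborhood $U$ by \cite[Th\'eor\`eme~8.8.2]{MR0217086}, and the $G_\eta$-equivariance of the spread-out $\varphi_U$ is obtained for free from the \emph{uniqueness} clause of that theorem (both $g'_{J_U}\circ\varphi_U$ and $\varphi_U\circ g'_{J_U\times E_U}$ restrict to the same map over $\eta$, hence coincide after shrinking $U$). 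This limit argument is the missing idea in your proposal. Your subsequent appeal to ``the concrete diagonal form of the $H$-action through roots of unity as in Proposition \ref{structure h}'' is also not available: in the lemma $G$ is an abstract finite abelian group acting on $S$-schemes, with no root-of-unity description of anything. The paper uses commutativity only once, in the elementary computation $g_jgg_ig_j^{-1}=gg_i$ that verifies equivariance of the torsor map after gluing the bundle over the orbit $V=\bigcup_g g(U)$ (having first shrunk $U$ so that $g(U)\cap U=\emptyset$ for $g\notin G_\eta$).

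Two smaller points. Your preliminary reduction to constant stabilizer is not needed (the paper works directly with the generic point of a component and its stabilizer), and its justification is flawed: from $g\in G_\eta$ you cannot conclude $\eta\in I^{\langle g\rangle}$, since the stabilizer of a point may act nontrivially on its residue field — indeed in Proposition \ref{structure h} the stabilizer $\mu_x$ acts nontrivially on $k(x)$ in general, so the claim that $G_\eta$ acts trivially on the whole component is false. Finally, the formal bookkeeping in your first paragraph (reduction to the reduced case, scissors, induction) is fine and agrees with the paper's concluding display.
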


\begin{proof}
Take any generic point $\eta=\Spec(F)$ of $I$.
Replace, if necessary, $J$ by its reduced underlying subscheme with induced $G$-action.
We can do this, because it does not change the corresponding class in the Grothendieck ring.
Moreover $J^{\text{red}}\times_{I}\eta$ is equal to the reduced subscheme $J_\eta$ of $h^{-1}(\eta)$.
Let $G_\eta\subset G$ be the stabilizer of $\eta$.
By assumption $J_\eta$ is a $G_\eta$-equivariant affine bundle of rank
$e$ with translation space $E\cong \mathbb{A}_F^e$ and affine $G_\eta$-action,
i.e.~there is a $G_\eta$-action on $E$, which is linear over the action on $F$,
and a $G_\eta$-equivariant morphism
 $\varphi: E\times J_\eta\to J_\eta$ inducing an isomorphism $\varphi\times p_{J_\eta}: E\times J_\eta\to J_\eta\times J_\eta $,
 where $p_{J_\eta}$ denotes the projection to $J_\eta$. 

Take a $G_\eta$-invariant affine open $U\subset I$ containing $\eta$, which
exists because the action of $G_\eta$ on $I$ is good.
For all $g\in G_\eta$ with corresponding automorphism $g_F$ of  $\Spec(F)=\eta$, the induced map $g_E':E\to g_F^* (E)$ is linear over $F$, hence given by matrices with coefficients in $F$.
So after maybe shrinking $U$ again, we may assume that these matrices give rise to morphisms of vector bundles
${g_U': E_U:=E\times_F U \to g_U^* (E_U)}$, where $g_U$ denotes the automorphism of $U$ corresponding to $g$.
By replacing $U$ by $\cap_{g\in G}g_U(U)$, we may assume that $U$ is $G$-invariant.
Combining these maps with the projection maps $g_U^*(E_U)\to E_U$, we get a well defined good $G$-action on $E_U$,
which is linear over the action on $U$.

Note that $(E_U\times_U J_U)\times_U \eta=E \times J_\eta$ and $ J_U\times_U \eta=J_\eta$.
Hence it follows from \cite[Theorem~8.8.2]{MR0217086} that after maybe restricting $U$ again,
there is a unique $U$-morphism $\varphi_U: E_U\times_U J_U\to J_U$ such that its restriction to $\eta \in U$ is equal to $\varphi$.
Again we may assume that $U$ is $G$-invariant.
Using a similar argument for $\varphi\times p_{J_\eta}$ and its inverse,
we may assume that $\varphi_U\times p_{J_U}:  E_U\times_U J_U\to J_U\times J_U$
is actually an isomorphism.
Here $p_{J_U}$ denotes the projection to $J_U$.

For $g\in G_\eta$ let $g'_{E_U\times J_U}: E_U\times_U J_U\to g_U^* (E_U\times_U J_U)$, $g'_{J_U}: J_U\to g_U^* (J_U)$,
$g'_{E\times J_\eta}: E\times J_\eta\to g_F^* (E\times J_\eta)$, and $g'_{J_\eta}: J_\eta\to g_F^* (J_\eta)$ be
the maps induced by he actions on $E_U\times_U J_U$, $J_U$, $E\times J_\eta$ and $J_\eta$.
By \cite[Theorem~8.8.2]{MR0217086}, we can restrict $U$ such that for all $g\in G_\eta$,
there is a unique map $ E_U\times J_U\to g_U^*(J_U)$ restricting to $g_{J_\eta}'\circ \varphi=\varphi \circ g_{E\times J_\eta}'$.
As both $g'_{J_U}\circ \varphi_U$ and $\varphi_U\circ g'_{J_U\times E_U}$ have this property,
they are equal, and hence $\varphi_U$ is $G_\eta$-invariant.
Altogether $J_U$ is an affine bundle with affine $G_\eta$-action and translation space $E_U$.

We can now restrict $U$ further such that for all $g\in G\setminus G_\eta$, $g(U)\cap U=\emptyset$.
Set $V=\cup _{g\in G} g(U)$, and let $r$ be the number of connected components $V_i \subset V$.
Note that all such $V_i$ are of the form $g_i(U)$ for some $g_i\in G$.
For all $i$ we fix such a $g_i$. Without loss of generality we may assume that $V_1=U$ and $g_1=\Id$.
Consider the vector bundle $f: \tilde{E}:=\sqcup _{i=1}^r E\cong \mathbb{A}^e_{V}\to V$ over $V$.
Here for all $i$ the $U$-scheme $E$ becomes a $V_i$-scheme using $g_i$.

For every $g\in G$ denote by $g$ also the corresponding morphisms of $J_V$ and $E_U$
(the last of course only exists if $g\in G_\eta\subset G$).
Denote by $\tilde{E}_{i}=E\times V_i$ the inverse image of $V_i$ in $\tilde{E}\times_V J_V$.
We define a morphism $\tilde{\varphi}: \tilde{E}\times J_V\to J_V$
by setting $\tilde{\varphi}\rvert_{\tilde{E}_i}=g_i\circ \varphi \circ (\Id\times g_i^{-1})$.
By construction, the induced map $\tilde{E}\times J_V\to J_V\times J_V$ is an isomorphism.

For every $g\in G$ with $g(V_i)=V_j$,
$g g_i g_{j}^ {-1} \in G_\eta$.
Consider the automorphism of $\tilde{E}$ 
given by sending $c\in f^{-1}(V_i)\cong E_U$
to $g g_i g_{j }^ {-1}(c) \in f^{-1}(V_j)\cong E_U$.
Doing so for all $g\in G$, we get a good $G$-action on $\tilde{E}$.
For any $g\in G$
look at the induced map 
$g_{\tilde{E}}' : \tilde{E}\to g_V^*(\tilde{E})$.
If $g(V_i)=V_j $,
then we have again that $\tilde{g}:=gg_i g_{j }^ {-1}\in G_\eta$, and
it is easy to see that
\[
 g_{\tilde{E}}'\rvert_{f^ {-1}(V_i)}: f^ {-1}(V_i)\cong E_U\to \tilde{g}_V^ * (f^ {-1}(V_j ))\cong \tilde{g}_U^*(E_U),
\]
coincides with the map $\tilde{g}'_{E_U}$, which is a morphism of vector bundles.
Thus the action on $\tilde{E}$ is linear over the action on $V$.
Moreover we have
\begin{align*}
 \tilde{\varphi}\circ g\rvert_{\tilde{E}_i}&=\tilde{\varphi}\rvert_{\tilde{E}_{j }}\circ (g g_ig_{j}^{-1}\times g)=g_{j}\circ \varphi \circ ((g  g_i  g_{j}^{-1})\times gg_{j}^{-1})\\
 &= g_{j}gg_ig_{j}^{-1}\circ \varphi \circ (\Id\times g_i^ {-1})=gg_i\circ \varphi\circ (\Id\times g_i^ {-1})=g\circ \tilde{\varphi}\rvert_{\tilde{E}_i}.
\end{align*}
Here we used that $gg_ig_{j}^{-1}\in G_\eta$, that $\varphi_U$ is $G_\eta$-invariant,
and that $G$ is actually commutative.
This calculation implies that $\tilde{\varphi}$ is $G$-equivariant.
Hence all together we have shown that $J_V$ is a $G$-equivariant affine bundle of rank $e$ over $V$
with translation space $\tilde{E}$ and affine $G$-action.

Now we proceed with $I\setminus V$
until by Notherian induction we found a stratification of $I$ into finitely many locally closed subschemes $C_i$
such that $(h^{-1}(C_i))^{\text{red}}\to C_i$ is an affine bundle of rank $e$ with affine $G$-action. Hence by Remark \ref{compute [V]}
\[
 [J]=[h^{-1}(I)]=\hspace{-3pt}\sum[h^{-1}(C_i)]=\hspace{-3pt}\sum[(h^{-1}(C_i))^{\text{red}}]=\hspace{-3pt}\sum [C_i]\mathbb{L}_S^e=[I]\mathbb{L}_S^e\in K_0^G(\Var_S).
\]

\end{proof}

\subsection{Equivariant motivic measure and integrals}
\label{eq motivic measure}
Let $G$ be a finite group, acting well on a complete discrete valuation ring $R$.
Take $X_\infty\in(stft/R,G)$, and assume that it has pure
relative dimension $m$ over $R$.
Consider the $G$-actions on $\Gr(X_\infty)$ and $\Gr_n(X_\infty)$ as constructed
in Proposition \ref{Gaction}.

\begin{defn}\label{defn cylinder}
Let $n\geq 0$ be an integer. A subset $A$ of $\Gr(X_\infty)$ is
called a \emph{cylinder of degree $n$}, if there exists a constructable
subset $C$ of $\Gr_n(X_\infty)$, such that $A=\theta^{-1}_n(C)$.
We say that a cylinder $A$ of degree $n$ is \emph{$G$-stable of degree $n$} if, moreover, $C=\theta_n(A)$ is closed under the action of $G$, and for any
integer $N\geq n$, the truncation map
$(\theta^{N+1}_n)^{-1}(C)\rightarrow (\theta^N_n)^{-1}(C)$ is piecewisely a $G$-equivariant affine bundle of rank $m$ with affine $G$-action.
\end{defn}

\begin{rem}\label{G-stabel G-equivariant}
Assume that $X_\infty$ is smooth over $R$.
 Then a cylinder $A\subset \Gr(X_\infty)$ of degree $n$ is $G$-stable if and only if it is $G$-invariant.
 This holds, because by Proposition \ref{Gaction}
the truncation map $\theta_n$ is $G$-equivariant,
so $\theta_n(A)$ is closed under the action of $G$ if and only if $A$ is $G$-invariant,
and by
Proposition~\ref{prop structure map} the truncation map
$\theta_N^{N+1}:\Gr_{N+1}(X_\infty)\to \Gr_N(X_\infty)$
is a $G$-equivariant affine bundle of degree $m$ with affine $G$-action for all $N\geq n$.
\end{rem}

\noindent
For every $G$-stable cylinder $A$ of degree $n$,
$\theta_n(A)$ is a constructable $G$-invariant subset of
the finite type $X_0$-scheme $\Gr_n(X_\infty)$,
and hence defines an element
of $K_0^G(\Var_{X_0})$
by Remark \ref{class constructible}.
This leads us to the following definition.

\begin{defn}\label{defn measure}
Let $A\subset \Gr(X_\infty)$ be a $G$-stable cylinder of degree $n$. Then
\[
\mu^G_{X_0}(A):=[\theta_n (A)]\LL^{-(n+1)m}\in \mathcal{M}^G_{X_0}
\]
is the \emph{naive $G$-equivariant motivic measure} of $A$
on $\Gr(X_\infty)$.

\end{defn}

\begin{rem}
Note that if $A$ is a $G$-stable cylinder of degree $n$, then it is a
$G$-stable cylinder of degree $n'$, for any $n'\geq n$,
because $\theta_n^{n'}$ is $G$-equivariant.
But still $\mu_{X_0}^G$ only depends on $A$ and not on $n$.
This is true, because if we view $A$
as a cylinder of degree $n'$
with $n'\geq n$,
then using Remark \ref{compute [V]}
we get
\[
 [\theta_{n'}(A)]\mathbb{L}^{-(n'+1)m}=[\theta_n(A)]\mathbb{L}^{(n'-n)m}\mathbb{L}^{-(n'+1)m}=[\theta_n(A)]\mathbb{L}^{-(n+1)m}\in K_0^G(\Var_{X_0}).
\]
\end{rem}

\begin{defn}\label{motintegral}
We call a function 
$\alpha:\Gr(X_\infty)\rightarrow\Z$ ,
i.e.~a map from all points of $\Gr(X_\infty)$ to $\mathbb{Z}$,
\emph{naively $G$-integrable}, if $\alpha$ takes
only a finite number of values, and if $\alpha^{-1}(i)$ is a
$G$-stable cylinder for each $i\in \N$. In this case, we define
the \emph{motivic integral} of $\alpha$ by
\[
\int_{X_\infty}\hspace{-10pt}\LL^{-\alpha}d\mu^G_{X_0}:=\hspace{-3pt}\sum_{i\in \mathbb{Z}}\mu^G_{X_0}(\alpha^{-1}(i))\LL^{-i} \in \mathcal{M}^G_{X_0}.
\]
\end{defn}

\begin{rem}\label{additive}
It is clear from the definition that $\mu^G_{X_0}$ is additive, i.e.~if
we can write a $G$-stable cylinder $A$ as a union
$A_1\cup A_2$ of $G$-stable cylinders $A_i$, then
\[
\mu^G_{X_0}(A)=\mu^G_{X_0}(A_1)+\mu^ G_{X_0}(A_2)-\mu_{X_0} G(A_1\cap A_2).
\]
It follows that if
$\alpha$ and $\beta$ are naively $G$-integrable,
then $\alpha+\beta$ is naively $G$-integrable, too, and
\[
\int_{X_\infty}\hspace{-10pt}\LL^{-(\alpha+\beta)}d\mu^G_{X_0}:=\hspace{-5pt}\sum_{i,j\in \mathbb{Z}}\mu^G_{X_0}(\alpha^{-1}(i)\cap \beta^{-1}(j))\LL^{-(i+j)} \in \mathcal{M}^G_{X_0}  .
\]
Moreover,
if $\{X_\infty^l\}_{l\in L}$ is a finite $G$-invariant cover of $X_\infty$ by opens, we have that
\[
 \int_{X_\infty}\hspace{-10pt}\LL^{-\alpha}d\mu^G_{X_0}=\hspace{-7pt}\sum_{\emptyset\neq \mathcal{L}\subset L}\hspace{-7pt}(-1)^{\lvert \mathcal{L}\rvert -1}\hspace{-5pt}\int_{\cap_{l\in \mathcal{L}}X_\infty^l}\hspace{-25pt}\LL^{-\alpha}d\mu^G_{X_0}.
\]

\end{rem}

\begin{rem}
As in \cite{sebag1} and \cite{NiSe2}, one could
define a bigger class of $G$-closed measurable subsets of
$\Gr(X_\infty)$, endowed with a $G$-equivariant motivic measure
taking values in an appropriate completion of
$\mathcal{M}^G_{X_0}$. We will not need such a construction for our
purposes. 
\end{rem}

\subsection{The equivariant change of variables formula}
\label{eq change of variables}
Let $G$ be again a finite group, acting nicely on a discrete valuation ring $R$.
Let furthermore $X_\infty$ and $Y_\infty$ in $ (stft/R,G)$ be smooth and of relative dimension $m$ over $R$,
and let $h: Y_\infty \to X_\infty$ be a $G$-equivariant morphism.
To simplify the notation,
we write $h$ also for the induced maps $\Gr(h): \Gr(Y_\infty)\to G(X_\infty)$
and ${\Gr_n(h): \Gr_n(X_\infty)\to \Gr_n(Y_\infty)}$.

\begin{rem}\label{alpha h}
Let $\alpha: \Gr(X_\infty)\to \mathbb{Z}$ be a naively $G$-integrable function.
Then
\[
 \alpha\circ h: \Gr(Y_\infty)\to \mathbb{Z}
\]
is also naively $G$-integrable.
This can be seen as follows:
as the image of $\alpha$ is finite, the same holds for the image of $\alpha \circ h$.
Moreover for all $i\in \mathbb{Z}$, $A_i:=\alpha^{-1}(i)\subset \Gr(X_\infty)$ is a $G$-stable cylinder of degree $n$
for some $n \in \mathbb{N}$.
As $h$ is $G$-equivariant, the same is true for the induced map on the Greenberg schemes,
so 
\[
(\alpha \circ h )^{-1}(i)= h^{-1}(A_i)= \theta_{n}^{-1}({h}^{-1}(\theta_{n}(A_i)))\subset \Gr(Y_\infty)
\]
is $G$-invariant, and a cylinder, because
$h^{-1}(\theta_{n}(A))$ is constructable due to the fact that
it is the inverse image of the constructable set $\theta_n(A)$.
So, as $Y_\infty $ is smooth, by Remark~\ref{G-stabel G-equivariant}, $(\alpha \circ h)^ {-1}(i)$ is $G$-stable cylinder.
\end{rem}

\noindent
The aim of this section is to compare the motivic integrals of $\alpha$
and $\alpha\circ h$. 
In the change of variable formula,
the difference will be described using the order of the Jacobian of $h$,
see Definition \ref{def ordjac}.
Before we proof the change of variable formula, we first need to show the following lemma
about the order of the Jacobian.

\begin{lem}\label{jac}
The fibers of the function
$\Ord(\Jac_h): \Gr(Y_\infty) \to \mathbb{N}$
are $G$-invariant.
\end{lem}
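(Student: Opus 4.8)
The plan is to prove the sharper statement that $\Ord(\Jac_h)$ is constant on $G$-orbits, i.e.\ $\Ord(\Jac_h)(g\cdot y)=\Ord(\Jac_h)(y)$ for every $y\in\Gr(Y_\infty)$ and every $g\in G$; invariance of each fiber $\Ord(\Jac_h)^{-1}(e)$ is then immediate. First I would unwind Definition \ref{def ordjac} at the translated point. Let $y$ have residue field $F$, put $R':=\mathcal{R}(F)$ with valuation $\xi$, and let $\psi\in Y_\infty(R')$ be the corresponding section, so $P:=\psi(0)$ is the image of the closed point. By Remark \ref{action on closed points} the point $g\cdot y$ corresponds to the section $\psi':=g_{Y_\infty}\circ\psi\circ g_{R'}^{-1}$, where $g_{R'}$ and $g_{Y_\infty}$ are the automorphisms attached to $g$. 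Since $g_{R'}^{-1}$ is a ring automorphism of the local ring $R'$ it fixes the closed point of $\Spf(R')$, whence $\psi'(0)=g_{Y_\infty}(P)=:P'$.

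The key step is the $G$-invariance of the Jacobian ideal. Because $h$ is $G$-equivariant, the pair $(g_{Y_\infty},g_{X_\infty})$ is an automorphism of the morphism $h$, i.e.\ $h\circ g_{Y_\infty}=g_{X_\infty}\circ h$. As $\Jac_h$ is functorially attached to $h$, being the $0$-th Fitting ideal of $\Omega_{Y_\infty/X_\infty}$ (Definition \ref{def jac}) and Fitting ideals commute with the isomorphisms induced on Kähler differentials by such an equivariant square, the comorphism $g_{Y_\infty}^\sharp$ carries $\Jac_h$ to itself. Concretely, $g_{Y_\infty}^\sharp:\mathcal{O}_{Y_\infty,P'}\to\mathcal{O}_{Y_\infty,P}$ maps $(\Jac_h)_{P'}$ isomorphically onto $(\Jac_h)_{P}$, so as $f$ runs over $(\Jac_h)_{P'}$ the element $g_{Y_\infty}^\sharp(f)$ runs over all of $(\Jac_h)_{P}$.

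Finally I would combine these facts with invariance of the valuation. For $f\in(\Jac_h)_{P'}$ the chain rule for comorphisms gives $\psi'^{*}(f)=(g_{R'}^{-1})^\sharp\big(\psi^{*}(g_{Y_\infty}^\sharp f)\big)$, and since $g_{R'}^{-1}$ is a ring automorphism of the discrete valuation ring $R'$ it preserves $\xi$; hence $\xi(\psi'^{*}(f))=\xi(\psi^{*}(g_{Y_\infty}^\sharp f))$. Taking the minimum over $f\in(\Jac_h)_{P'}$ and re-indexing via the bijection $f\mapsto g_{Y_\infty}^\sharp f$ onto $(\Jac_h)_{P}$ from the previous paragraph yields $\Ord(\Jac_h)(g\cdot y)=\Ord(\Jac_h)(y)$, which is what we want. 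The only point that genuinely requires care is the $G$-invariance of $\Jac_h$; once that is in place, the rest is bookkeeping with the explicit description of the action in Remark \ref{action on closed points} together with the fact that $R'=\mathcal{R}(F)$ is a discrete valuation ring (Remark \ref{R(F)}), on which every ring automorphism is an isometry for $\xi$.
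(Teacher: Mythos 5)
Your argument is correct, but it takes a genuinely different route from the paper's. You work directly from Definition \ref{def ordjac}: you establish that $\Jac_h$ is a $G$-invariant ideal sheaf (from $h\circ g_{Y_\infty}=g_{X_\infty}\circ h$, the induced isomorphism $g_{Y_\infty}^*\Omega_{Y_\infty/X_\infty}\cong\Omega_{Y_\infty/X_\infty}$, and compatibility of Fitting ideals with base change), and then observe that $\psi'^*(f)=(g_{R'}^{-1})^\sharp\bigl(\psi^*(g_{Y_\infty}^\sharp f)\bigr)$ together with the fact that any ring automorphism of the discrete valuation ring $R'=\mathcal{R}(F)$ preserves the valuation, so the minimum defining $\Ord(\Jac_h)$ is unchanged under re-indexing by $f\mapsto g_{Y_\infty}^\sharp f$. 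The paper instead invokes Remark \ref{compute ordjac}, which re-expresses $\Ord(\Jac_h)(y)$ as the length of the cokernel of $\psi^*h^*\Omega^m_{X_\infty/R}\to\psi^*\Omega^m_{Y_\infty/R}$, and then chases a commutative diagram of top differential forms, pulling back by $\psi$ and by $g_{R'}^{-1}$ to identify the cokernels at $y$ and at $g\cdot y$. The two proofs use the same geometric input (equivariance of $h$ and the explicit description of the action on points from Remark \ref{action on closed points}); what your version buys is that it never leaves the definition, so it applies verbatim in the only-generically-smooth setting, whereas the paper's cokernel characterization requires smoothness and has to be patched separately by dividing out torsion (Remark \ref{nonsmooth jac}). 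The one step you should spell out a little more carefully is the invariance of $\Jac_h$: the precise claim is that the canonical map $g_{Y_\infty}^*\Omega_{Y_\infty/X_\infty}\to\Omega_{Y_\infty/X_\infty}$ is an isomorphism because both horizontal arrows of the square are isomorphisms, and that $\mathrm{Fitt}_0$ commutes with this pullback; as stated, ``functorially attached'' is a touch loose, but the verification is routine.
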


\begin{proof}
%
Take $g\in G$,
and let $g_{X_\infty}\in \Aut(X_\infty)$ and $g_{Y_\infty}\in \Aut(Y_\infty)$ be the corresponding automorphisms.
As $Y_\infty,X_\infty\in (stft/R,G)$,
the natural maps
\[
g_{Y_\infty}^*\Omega^m_{Y_\infty/R}\to \Omega^m_{Y_\infty/R} \text{ and } g_{X_\infty}^*\Omega^m_{X_\infty/R}\to \Omega^m_{X_\infty/R}
\]
are isomorphisms.
Hence we get the following commutative diagram:
\[
 \xymatrix{
   g_{Y_\infty}^*h^* \Omega^m_{X_\infty/R}\ar[r]\ar[d]_\cong & g_{Y_{\infty}}^*\Omega^m_{Y_\infty/R}\ar[d]^\cong \\
   h^* \Omega^m_{X_\infty/R}\ar[r] & \Omega^m_{Y_\infty/R}
 }
\]
Here we used that as $h$ is $G$-equivariant,
$h\circ g_{Y_\infty}=g_{X_\infty}\circ h$.
Take a closed point in $\Gr(Y_\infty)$ with residue field $F$,
corresponding to an element $\psi \in Y_\infty(R')$
with $R'=\mathcal{R}(F)$.
Pulling back all the maps in the commutative diagram with $\psi$ we get 
that the cokernels of the two maps
\begin{align*}
 s_g:\psi^*g^*_{Y_\infty} h^* \Omega^m_{X_\infty/R}\to \psi^* g_{Y_\infty}^*\Omega^m_{Y_\infty/R}\text{ and }
 s:\psi^*h^*\Omega^m_{X_\infty/R}\to \psi^* \Omega^m_{Y_\infty/R}
\end{align*}
are isomorphic.
Recall that the $G$-action on $R$ induces canonically a $G$-action on $R'$.
Let $g_{R'}\in \Aut(\Spf(R'))$ be the automorphism corresponding to $g\in G$.
Now pulling back $s_g$ via $g_{R'}^{-1}$, we get that the cokernel of $s$ is also isomorphic to the cokernel of
\[
\bar{s}_g: {{g_{R'}^{-1}}}^* \psi^* g_{Y_\infty}^*h^* \Omega^m_{X_\infty/R}\to  {g_{R'}^{-1}}^* \psi^* g_{\infty}^*\Omega^m_{Y_\infty/R}.
\]
Note that ${g_{R'}^{-1}}^* \psi^* g_{Y_\infty}^*=(g_{Y_\infty} \circ \psi \circ g_{R'}^{-1})^* $.
Now assume that $\psi$ is corresponding to a point in $ J_e:=\Ord(\Jac_h)^{-1}(e)$,
hence by Remark \ref{compute ordjac} the cokernel of $s$ has length $e$,
so the same holds for the cokernel of $\bar{s}_g$,
and the point corresponding to 
$g\circ \psi \circ g_{R'}^{-1}$ lies in $J_e$, too.
By Remark \ref{action on closed points}, 
the action of $G$ on $\Gr(Y_\infty)$ maps $\psi$ to $g\circ \psi \circ g_{R'}^{-1}$ for all $g\in G$,
so $J_e$ is closed under the action of $G$
for all $e\in \mathbb{N}$.
%
\end{proof}

\begin{rem}\label{nonsmooth jac}
 Assume that $X_\infty$ is only generically smooth.
 Take a point $y$ in $\Gr(Y_\infty) $ corresponding to $\psi\in Y_\infty(R')$.
 Then by Remark \ref{compute ordjac}, $\Ord(\Jac_h)(y)$
 is given by the length of the cokernel of
$s: \psi^*h^*\Omega^m_{X_\infty/R}/(\text{torsion})\rightarrow \psi^*\Omega^m_{Y_\infty/R}$.
Hence dividing out torsion in the proof above gives us a proof of Lemma \ref{jac} in the case that $X_\infty$ is only generically smooth.
\end{rem}

\noindent
Now we are ready to state and proof the change of variables formula for equivariant motivic integrals.
The main ingredient of the proof is Proposition \ref{structure h}.
To be able to use it, we need to put some extra assumptions on $G$ and $R$.

\begin{thm}[Equivariant change of variables formula]\label{changevar}
Assume that $G$ is a finite abelian group,
and acts tamely on a complete discrete valuation ring of equal characteristic $R$,
whose residue field contains all roots of unity.
Let $X_\infty,Y_\infty$ in $(stft/R,G)$ be smooth and of pure dimension over $R$,
and let $h:Y_\infty\rightarrow X_\infty$ be a $G$-equivariant morphism,
such that $h_\eta:Y_\eta\rightarrow X_\eta$
is an open immersion, and the induced map
${Y_\eta (K')\to X_\eta(K')}$ is a bijection for all unramified extensions $K'$ of $K$,
the quotient field of $R$.

If $\alpha$ is a naively $G$-integrable function on
$\Gr(X_\infty)$, then $\alpha\circ h+\Ord(\Jac_h)$ is naively
$G$-integrable on $\Gr(Y_\infty)$, and
\[
 \int_{X_\infty}\hspace{-10pt}\LL^{-\alpha}d\mu^G_{X_0} = \hspace{-2pt}\int_{Y_\infty}\hspace{-10pt}\LL^{-(\alpha\circ h + \Ord(\Jac_h))}d\mu^G_{X_0}\in\mathcal{M}^G_{X_0}. 
\]
\end{thm}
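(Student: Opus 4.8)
The plan is to run the non-equivariant argument of \cite[Th\'eor\`eme 7.3.3]{sebag1} while keeping track of the $G$-action, using Proposition \ref{structure h} and Lemma \ref{lemma local global} for the equivariant input. First I would settle integrability. By Remark \ref{alpha h}, $\alpha\circ h$ is naively $G$-integrable. For $\Ord(\Jac_h)$, Lemma \ref{jac} shows each fibre $\Delta_e:=\Ord(\Jac_h)^{-1}(e)$ is $G$-invariant, and since $Y_\infty$ is smooth, Remark \ref{G-stabel G-equivariant} promotes $G$-invariance to $G$-stability of the associated cylinder; that these sets are cylinders and that $\Ord(\Jac_h)$ takes finitely many relevant values is the non-equivariant content of \cite[Th\'eor\`eme 7.3.3]{sebag1}. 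Additivity (Remark \ref{additive}) then yields naive $G$-integrability of $\alpha\circ h+\Ord(\Jac_h)$.

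Writing $A_i:=\alpha^{-1}(i)$ for the finitely many $G$-stable cylinders cutting out $\alpha$, Remark \ref{additive}, applied to $\alpha\circ h$ and $\Ord(\Jac_h)$, expands the right-hand integral as $\sum_{i,e}\mu^G_{X_0}(h^{-1}(A_i)\cap\Delta_e)\LL^{-(i+e)}$. Hence the theorem follows once I prove, for each fixed $A=A_i$, the measure identity
\[
\mu^G_{X_0}(A)=\sum_{e\geq 0}\mu^G_{X_0}\big(h^{-1}(A)\cap\Delta_e\big)\,\LL^{-e}\in\mathcal{M}^G_{X_0};
\]
multiplying by $\LL^{-i}$ and summing over $i$ then gives $\int_{X_\infty}\LL^{-\alpha}\,d\mu^G_{X_0}=\int_{Y_\infty}\LL^{-(\alpha\circ h+\Ord(\Jac_h))}\,d\mu^G_{X_0}$.

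To prove this identity I pass to a finite truncation level. Since $h_\eta$ is an open immersion and $Y_\eta(K')\to X_\eta(K')$ is bijective on unramified $K'$, and closed points of the Greenberg schemes correspond to unramified extensions (Remark \ref{R(F)}), the map $\Gr(h)$ is a bijection on closed points. I fix $n$ larger than the degree of $A$ and with $n\geq 2e$ for each of the finitely many occurring $e$, and set $C:=\theta_n(A)$, $C^{(e)}:=\theta_n(h^{-1}(A)\cap\Delta_e)$ and $D^{(e)}:=\Gr_n(h)(C^{(e)})\subseteq C$. Every closed point of $C$ lifts, using that $\theta_n$ is surjective by smoothness, to a closed point of $A$, which by bijectivity comes from a unique fibre $\Delta_e$; hence the $G$-invariant constructible sets $D^{(e)}$ cover $C$ on closed points, so $C=\bigcup_e D^{(e)}$. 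Over each point $x_n\in D^{(e)}$, Proposition \ref{structure h} identifies the reduced fibre of $\Gr_n(h)\colon C^{(e)}\to D^{(e)}$ with a $G_x$-equivariant affine bundle of rank $e$ with affine $G_x$-action; as $G$ is abelian, Lemma \ref{lemma local global} gives $[C^{(e)}]=[D^{(e)}]\LL^{e}$. Granting that the $D^{(e)}$ are pairwise disjoint, so that $[C]=\sum_e[D^{(e)}]$, the normalization of Definition \ref{defn measure} yields
\[
\sum_e\mu^G_{X_0}\big(h^{-1}(A)\cap\Delta_e\big)\LL^{-e}=\sum_e[C^{(e)}]\LL^{-(n+1)m-e}=\sum_e[D^{(e)}]\LL^{-(n+1)m}=[C]\LL^{-(n+1)m}=\mu^G_{X_0}(A).
\]

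The step I expect to be the main obstacle is exactly this finite-level bookkeeping: choosing a single $n$ that realizes $A$ as a cylinder, satisfies $n\geq 2e$ for all occurring Jacobian orders $e$, and makes Proposition \ref{structure h} applicable to the reduced fibres of the \emph{restricted} map $\Gr_n(h)\colon C^{(e)}\to D^{(e)}$. The key rigidity is \cite[Lemme 7.2.2]{sebag1}: for $n\geq 2e$ every fibre of $\Gr_n(h)$ meeting $\Delta_e$ collapses to a single point under $\theta^n_{n-e}$. This is what both identifies the reduced fibre with the affine bundle $B_{x_n}$ of Proposition \ref{structure h} and forces the images $D^{(e)}$ to be pairwise disjoint, legitimizing $[C]=\sum_e[D^{(e)}]$; the remainder is the equivariant repackaging of the classical computation.
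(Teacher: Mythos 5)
Your proposal is correct and follows essentially the same route as the paper's proof: decompose via $A_i\cap h(J_e)$ (your $D^{(e)}$), invoke Proposition \ref{structure h} on reduced fibres and Lemma \ref{lemma local global} to get $[C^{(e)}]=[D^{(e)}]\LL^{e}$, and use bijectivity of $\Gr(h)$ together with \cite[Lemme 7.2.2]{sebag1} for the covering and disjointness. The only cosmetic difference is that you phrase the cylinder/disjointness bookkeeping at truncation level $n$ via closed points, where the paper works directly with the cylinders $A_i\cap h(J_e)$ in $\Gr(X_\infty)$.
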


\begin{proof}
By Remark \ref{alpha h}, $\alpha \circ h$
is naively $G$-integrable.
As $h_\eta$ is an open immersion,
by \cite[Chapter 4, 3.2.4]{CNS} $\Ord(\Jac_h)$ is constant on $\Gr(Y_\infty^i)$ for every connected component $Y_\infty^i$ of $Y_\infty$.
Hence for all $e\in \mathbb{N}$, $J_e:=\Ord(\Jac_h)^{-1}(e)$ is the union of the $\Gr(Y_\infty^i)$
such that $\Ord(\Jac_h)\rvert_{\Gr(Y_\infty^i)}$ has value $e$,
which are by construction cylinders of degree $0$.
By Lemma \ref{jac}, $J_e$ is $G$-invariant, hence, as $Y_\infty$ is smooth, by Remark~\ref{G-stabel G-equivariant} a $G$-stable cylinder,
so $\Ord(\Jac_h)$ is naively $G$-integrable.
By Remark~\ref{additive}
the same holds also for the sum of the two considered functions.

Set $A_i:=\alpha^{-1}(i)$ for all $i\in \mathbb{Z}$.
Then
$h^{-1}(A_i)\cap J_e$ is a $G$-invariant cylinder.
As the map $h: \Gr(Y_\infty)\to \Gr(X_\infty)$ is $G$-equivariant,
$h(h^{-1}(A_i) \cap J_e)=A_i\cap h(J_e)$
is $G$-closed. 
By \cite[7.2.2]{sebag1} it is a cylinder,
hence using Remark \ref{G-stabel G-equivariant} it is a $G$-stable cylinder.
Now consider
\[
 h:{\theta_n(h^{-1}(A_i) \cap J_e)}\to \theta_n(A_i \cap h(J_e))
\]
for some $n\geq 2e$.
For every point $x_n \in \theta_n(A_i \cap h(J_e))$ with stabilizer $G_x$, the induced map
$h: (h^{-1}(x_n))^{\text{red}}\to x_n$ is a
$G_x$-equivariant affine bundle of rank $e$ with affine $G_x$-action, see Proposition \ref{structure h}.
Hence by Lemma \ref{lemma local global},
\[
 [\theta_n(h^{-1}(A_i) \cap J_e)]=[\theta_n(A_i\cap h(J_e))]\mathbb{L}^e \in K_0^G(\Var_{X_0}).
\]
This implies that
\begin{align*}
 \int_{Y_\infty}\hspace{-10pt}\LL^{-(\alpha\circ h + \Ord(\Jac_h))}d\mu^G_{X_0}
 & = \hspace{-3pt}\sum_{i,e\in \mathbb{Z}}\mu^G_{X_0}(h^{-1}(A_i)\cap J_e)\LL^{-(i+e)}\\
 & = \hspace{-3pt}\sum_{i,e\in \mathbb{Z}}\mu^G_{X_0}(A_i\cap h(J_e))\LL^{e}\LL^{-(i+e)}\\
 & =\hspace{-2pt}\sum_{i\in \mathbb{Z}}\mu^G_{X_0}(A_i)\LL^{-i}
 =\hspace{-2pt}\int_{X_\infty}\hspace{-10pt}\LL^{-\alpha}d\mu^G_{X_0}.
\end{align*}
Here we used
that if $Y_\eta(K')\to X_\eta(K')$ is a bijection for every unramified extension $K'/K$,
then the map $h: \Gr(Y_\infty)\to \Gr(X_\infty)$ is a bijection, too, see \cite[Lemma~2.4.1]{MR2885338}.
Hence $h(J_e)\cap h(J_j)$ is empty for $e\neq j$,
and 
\[
\bigcup_{e\in \mathbb{N}} h(J_e)=h(\bigcup_{e\in \mathbb{Z}}J_e)=h(\Gr(Y_\infty))=\Gr(X_\infty).
\]
\end{proof}

\begin{rem}
 If $R$ has unequal characteristic,
 by Remark \ref{mixed wrong}
 we do not get Proposition \ref{structure h} in the usual Grothendieck ring,
 even in the non-equivariant case.
 Still it might be possible to have a similar result in the some modified equivariant Grothendieck ring,
 where we divide out purely inseparable maps.
\end{rem}

\section{Group actions on weak N\'eron models}
\label{weak neron}
\noindent
In order to be able to define and compute the equivariant integral of a gauge form
of a possibly non-smooth formal scheme with group action in Section~\ref{eq poincare series},
we will make use of weak N\'eron models with group actions,
which will be studied in this section.

\subsection{Equivariant N\'eron smoothenings}
\label{neron smoothening}
Let $G$ be a finite group,
fix a nice $G$-action on a complete discrete valuation ring $R$,
and take $X_\infty \in (stft/R,G)$ flat over $R$.
Denote by $X_\eta$ the generic fiber in the category of rigid varieties.

\begin{defn}[\cite{formner}] \label{def wnm}
A \textit{weak N\'eron $R$-model for $X_\eta$} is a smooth
formal scheme $U_\infty\in (stft/R)$, whose generic fiber is an
open rigid
 subspace of $X_\eta$, and which has the property that the natural
maps
$U_\infty(R')\rightarrow X_\eta(K')$ are bijective for any finite
unramified extension $K'$ of $K$, where $R'$ denotes the normalization
of $R$ in $K'$.
\end{defn}

\begin{defn}
We say that a morphism $f:U_\infty \rightarrow X_\infty$ in
$(stft/R,G)$ is a \emph{$G$-equivariant N\'eron $R$-smoothening} for
$X_\infty$, if it satisfies the following properties:
\begin{enumerate}  \item there exists a morphism
 $X'_\infty\rightarrow X_\infty$  in $(stft/R,G)$,
 inducing an isomorphism $X'_\eta\rightarrow X_\eta$ on the generic
 fibers, such that
 $f$ factors through
a $G$-equivariant open immersion $U_\infty\hookrightarrow X'_\infty$,
\item $U_\infty$ is a weak N\'eron $R$-model for $X_\eta$.
\end{enumerate}
\end{defn}

\noindent
Note that
$U_\infty=\Sm(X'_\infty)$, since any closed point of the special fiber of $\Sm(X'_\infty)$
lifts to a section in $X'_\infty(R')$ for some finite unramified
extension $R'$ of $R$.
The action of $G$ on $X_\infty'$ automatically restricts to $\Sm(X_\infty ')$,
because smooth points are mapped to smooth points by automorphisms.

\begin{rem}\label{not unique wnm}
Note that a $G$-equivariant N\'eron smoothening is in general not unique.
If we have a $G$-equivariant N\'eron smoothening $f:U_\infty\to X_\infty$
of $X_\infty$ given by $X'_\infty\to X_\infty$,
then blowing up $X'_\infty$ in the orbit of a closed point in the image of the special fiber of $U_\infty$
gives rise to a different $G$-equivariant N\'eron smoothening of $X_\infty$.
\end{rem}

\begin{thm}\label{neronsmooth}
Every generically smooth, flat  formal scheme $X_\infty\in (sftf/R,G)$ admits a $G$-equivariant N\'eron smoothening.
\end{thm}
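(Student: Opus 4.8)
The plan is to reduce the statement to the non-equivariant Néron smoothening theorem for formal schemes and then to observe that the smoothening process can be carried out through $G$-invariant centers, so that it produces $G$-equivariant data at every stage. Recall that in the non-equivariant setting (see \cite{formner}) a weak Néron model of $X_\eta$ is obtained from $X_\infty$ by a finite sequence of admissible dilatations (formal blow-ups, followed by removal of the exceptional special fiber), each performed along a center contained in the non-smooth locus of the special fiber, after which the smooth locus of the resulting formal scheme is a weak Néron model. Since $X_\eta$ is smooth, the locus where $X_\infty\to\Spf R$ fails to be smooth is a closed formal subscheme with empty generic fiber, hence supported in $X_0$, and every morphism in the resulting tower induces an isomorphism on generic fibers.

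First I would arrange for the centers to be $G$-invariant. The non-smooth locus is intrinsic, hence stable under every automorphism $g_{X_\infty}$, and equipping it with its canonical (Fitting-ideal) subscheme structure keeps it $G$-invariant; when the non-equivariant algorithm prescribes a smaller center $Z$ inside it, one replaces $Z$ by the union $\bigcup_{g\in G} g(Z)$ of its orbit, which is $G$-invariant and still vertical, exactly the kind of center already used in Remark \ref{not unique wnm}. Because the $G$-action is good, $X_\infty$ is covered by $G$-invariant affine open formal subschemes, and the dilatation along a $G$-invariant center restricts to each such chart and commutes with the automorphisms acting there. Consequently each blow-up $X_\infty^{(1)}\to X_\infty$ carries a canonically induced good $G$-action for which the morphism is $G$-equivariant, the structure map $X_\infty^{(1)}\to\Spf R$ stays $G$-equivariant and compatible with the nice action on $R$, and $X_\infty^{(1)}\in(stft/R,G)$. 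Iterating, every stage remains in $(stft/R,G)$ with $G$-equivariant transition morphisms.

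Next I would invoke termination. The numerical invariant that ends the smoothening algorithm in the non-equivariant case depends only on the underlying formal scheme and the ambient dilatations, neither of which is altered by remembering the $G$-action, so the $G$-equivariant tower stabilises after finitely many steps at a $G$-equivariant morphism $X_\infty'\to X_\infty$ inducing an isomorphism on generic fibers. Setting $U_\infty:=\Sm(X_\infty')$, the $G$-action restricts to $U_\infty$ since automorphisms send smooth points to smooth points, and the inclusion $U_\infty\hookrightarrow X_\infty'$ is a $G$-equivariant open immersion. As the weak Néron model property, namely the bijectivity of $U_\infty(R')\to X_\eta(K')$ for unramified $K'/K$, is precisely the intrinsic non-equivariant conclusion, it holds automatically, so $f:U_\infty\to X_\infty$ is a $G$-equivariant Néron smoothening.

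The main obstacle is the second paragraph: checking that dilatation along a $G$-invariant vertical center really yields an object of $(stft/R,G)$ with a good action and a $G$-equivariant blow-up morphism. The delicate points are that the induced action on the blow-up is again \emph{good} (for which one uses that the blow-up of a $G$-invariant affine formal subscheme along a $G$-invariant ideal is again covered by $G$-invariant affines), and that performing the dilatation locally on $G$-invariant charts glues $G$-equivariantly; both follow from the functoriality of formal blow-ups with respect to automorphisms fixing the center, together with the goodness of the action on $X_\infty$.
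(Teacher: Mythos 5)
Your overall strategy is the same as the paper's: run the non-equivariant smoothening of \cite{formner}, which proceeds by admissible formal blow-ups in vertical centers, and equivariantize it by making each center $G$-invariant; the part of your argument showing that a formal blow-up in a $G$-closed ideal inherits a good $G$-action with equivariant structure maps matches the paper's first paragraph (which does it via flat base change for formal blow-ups and projectivity of $h$), and the final step $U_\infty=\Sm(X_\infty')$ is identical.

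The gap is in how you make the centers $G$-invariant. The paper does \emph{not} enlarge the centers: it invokes the fact (proved in the algebraic case in \cite[Lemma~2.10]{Abi1}, and asserted to carry over verbatim to the formal construction of \cite[\S 3, Lemma~3.4]{formner}) that the \emph{canonical} centers prescribed by the Bosch--L\"utkebohmert--Raynaud algorithm are already $G$-closed, because they are cut out intrinsically (as closures of loci where the defect of smoothness $\delta$ is maximal, relative to a $G$-stable collection of sections). You instead propose to take whatever center $Z$ the algorithm hands you and replace it by $\bigcup_{g\in G}g(Z)$. Once you do that, the tower you build is no longer the non-equivariant tower, so your termination argument --- ``the numerical invariant \dots depends only on the underlying formal scheme and the ambient dilatations, neither of which is altered by remembering the $G$-action'' --- does not apply as stated: the dilatations \emph{have} been altered. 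To salvage this route you would have to re-enter the BLR induction and check (i) that the orbit union, with a suitable (geometrically reduced) scheme structure, is still an admissible center contained in the non-smooth locus, (ii) that dilatation along this larger center does not increase $\delta$ anywhere and still strictly decreases it where required, and (iii) that the resulting smooth locus still satisfies the lifting property defining a weak N\'eron model. None of this is automatic, and your closing claim that the weak N\'eron property ``holds automatically'' begs the question for the modified algorithm. The clean fix is the one the paper uses: show the canonical centers are already $G$-invariant, so no enlargement is needed.
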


\begin{proof}
Let  $\mathcal{I}$
be any ideal sheaf on $X_\infty$, which contains the uniformizing parameter $t$ of $R$ and is closed under
the action of $G$. Let $h:X'_\infty\rightarrow X_\infty$ be the
formal blow-up of $X_\infty$ at $\mathcal{I}$.
Fix $g\in G$,
and denote by $g$ also the corresponding automorphism of $X_\infty$. 
Then by flat base change for formal blow-ups, see \cite[Proposition 2.16]{MR2461256}, we
get a Cartesian square
\[
\xymatrix{
X_\infty''\ar[r]^{g'}\ar[d]_{h'}& X'_\infty\ar[d]^h \\
 X_\infty \ar[r]^g & X_\infty
}
\]
where $h'$ is the formal blow-up of $X_\infty$ at
$g^*\mathcal{I}$. Since $g^{*}\mathcal{I}=\mathcal{I}$ by assumption,
$X_\infty''=X_\infty'$ and $h=h'$, hence we have a
natural morphism $g':X'_\infty\rightarrow X'_\infty$ lying over $g$. 
Doing so for every $g\in G$, this defines
an action of $G$ on $X'_\infty$
such that $h$ is $G$-equivariant. The fact that this action is good
follows, because $h$ is projective.

By \cite[\S 3, Theorem 3.1]{formner}
every quasi-compact formal $R$-scheme,
hence in particular every stft formal $R$-scheme,
admits a N\'eron smoothening by means of admissible blow-ups, i.e.~by formal blow-ups with center in the special fiber of
$G$-closed ideal sheaves.
From the argument above it follows that it suffices to show that these ideal sheaves are $G$-closed.
The canonical
smoothening for the algebraic case constructed in \cite[\S 3, Theorem 2]{neron}
is given by a sequence of blow-ups in $G$-closed ideal sheaves, which was shown in \cite[Lemma 2.10]{Abi1}.
As the construction of the ideal sheaves in the formal setting,  see \cite[\S 3, Lemma 3.4]{formner},
works completely
analogously, the same proof can be used in the formal setting.
\end{proof}

%

\begin{cor}\label{dominate}
Take $X_\infty\in (stft/R,G)$, and 
let $f_i: U_\infty^i\to X_\infty$, $i\in \{1;2\}$,
be two $G$-equivariant N\'eron $R$-smoothenings of $X_\infty$.
Then there is a third $G$-equivariant N\'eron $R$-smoothening
$h: V_\infty\to X_\infty$, and two $G$-equivariant maps 
$h_i: V_\infty \to U^i_\infty$ with $f_i\circ h_i=h$, which are generically open immersions.
\end{cor}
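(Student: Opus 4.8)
The plan is to construct $V_\infty$ by first taking a fibre product of the two given smoothenings and then applying Theorem \ref{neronsmooth} to resolve it. Write each $f_i$ as $U^i_\infty\hookrightarrow \widetilde X^i_\infty\to X_\infty$, where $\widetilde X^i_\infty\to X_\infty$ induces an isomorphism on generic fibres and $U^i_\infty=\Sm(\widetilde X^i_\infty)$. First I would form the fibre product $\widetilde X^1_\infty\times_{X_\infty}\widetilde X^2_\infty$ in $(stft/R,G)$ with its diagonal $G$-action, which is good by the construction of fibre products in this category; its generic fibre is $\widetilde X^1_\eta\times_{X_\eta}\widetilde X^2_\eta\cong X_\eta$. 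Since this naive fibre product need not be $R$-flat, I would pass to its flat model $W_\infty$, i.e.\ the closed formal subscheme obtained by killing $t$-torsion (equivalently, the schematic closure of the generic fibre). This is a $G$-invariant, flat, generically smooth object of $(stft/R,G)$ with $W_\eta\cong X_\eta$, whose two projections $p_i\colon W_\infty\to \widetilde X^i_\infty$ and whose structure map $W_\infty\to X_\infty$ are $G$-equivariant and isomorphisms on generic fibres.

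Next I would apply Theorem \ref{neronsmooth} to $W_\infty$, obtaining a $G$-equivariant N\'eron smoothening $q\colon V_\infty\to W_\infty$ with $V_\infty=\Sm(W'_\infty)$ for some $W'_\infty\to W_\infty$ that is an isomorphism on generic fibres, and with $V_\infty\hookrightarrow W'_\infty$ a $G$-equivariant open immersion. Setting $h:=(W_\infty\to X_\infty)\circ q$ and $X''_\infty:=W'_\infty$, the composite $X''_\infty\to W_\infty\to X_\infty$ is an isomorphism on generic fibres, and $h$ factors through the $G$-equivariant open immersion $V_\infty\hookrightarrow X''_\infty$. As $V_\infty$ is a weak N\'eron model for $W_\eta\cong X_\eta$, this already exhibits $h$ as a $G$-equivariant N\'eron smoothening of $X_\infty$.

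It remains to produce the maps $h_i$, and this is the step I expect to be the main obstacle. Consider $\phi_i:=p_i\circ q\colon V_\infty\to \widetilde X^i_\infty$, which is $G$-equivariant and an isomorphism on generic fibres; the key claim is that it factors through the smooth locus $U^i_\infty=\Sm(\widetilde X^i_\infty)$. To see this I would argue pointwise. Since $V_\infty$ is smooth, every closed point $v$ of its special fibre lifts to a section $\sigma\in V_\infty(R')$ for some finite unramified extension $R'=\mathcal{R}(F)$, which is a complete discrete valuation ring as in Remark \ref{R(F)}. The section $\phi_i\circ\sigma\in \widetilde X^i_\infty(R')$ has generic fibre a point of $\widetilde X^i_\eta(K')\cong X_\eta(K')$, which by the weak N\'eron property of $U^i_\infty$ extends to a section $\tau\in U^i_\infty(R')\subset \widetilde X^i_\infty(R')$. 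As $\widetilde X^i_\infty$ is separated over $R$, a section over $R'$ is determined by its generic fibre, so $\phi_i\circ\sigma=\tau$ lands in $U^i_\infty$; in particular $\phi_i(v)\in U^i_0$. Thus $\phi_i^{-1}(U^i_\infty)$ is an open subscheme of $V_\infty$ whose special fibre contains every closed point of the finite type $k$-scheme $V_0$, hence equals all of $V_0$; therefore $\phi_i=(U^i_\infty\hookrightarrow \widetilde X^i_\infty)\circ h_i$ for a unique morphism $h_i\colon V_\infty\to U^i_\infty$.

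Finally I would verify the remaining assertions. Equivariance of $h_i$ follows from that of $\phi_i$ by uniqueness of the factorisation through the $G$-equivariant open immersion $U^i_\infty\hookrightarrow \widetilde X^i_\infty$. The identity $f_i\circ h_i=h$ holds because $f_i=(\widetilde X^i_\infty\to X_\infty)\circ(U^i_\infty\hookrightarrow \widetilde X^i_\infty)$ while $(\widetilde X^i_\infty\to X_\infty)\circ p_i$ is the structure map $W_\infty\to X_\infty$, so that $f_i\circ h_i=(W_\infty\to X_\infty)\circ q=h$. On generic fibres $\phi_{i,\eta}$ is the open immersion $V_\eta\hookrightarrow X_\eta\cong \widetilde X^i_\eta$, and its factorisation through the open $U^i_\eta$ gives $V_\eta\subseteq U^i_\eta$ with $h_{i,\eta}\colon V_\eta\hookrightarrow U^i_\eta$ an open immersion; hence each $h_i$ is generically an open immersion, as required.
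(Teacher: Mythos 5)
Your proposal is correct and follows essentially the same route as the paper: form the fibre product $\widetilde X^1_\infty\times_{X_\infty}\widetilde X^2_\infty$, apply Theorem \ref{neronsmooth} to it, and use the weak N\'eron property of each $U^i_\infty$ together with smoothness of $V_\infty$ to show the projections factor through the smooth loci. Your two extra precisions --- passing to the flat model of the fibre product before invoking Theorem \ref{neronsmooth}, and using separatedness to identify the two $R'$-sections with the same generic fibre --- are details the paper's proof leaves implicit, and they are handled correctly.
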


\begin{proof}
Let $f_i': X_\infty'^i\to X_\infty$ be the $G$-equivariant morphism
one gets from the definition of a $G$-equivariant N\'eron smoothening,
i.e.~the $f_i'$ induce isomorphisms on the generic fibers, and there are
$G$-equivariant immersion
$i_i: U_\infty^i\to X_\infty'^i$ such that $f_i=f_i'\circ i_i$.
Take $\tilde{Y}_\infty=X_\infty'^1\times_{X_\infty}X_\infty'^2$
in $(stft/R,G)$.
Let $V_\infty \to \tilde{Y}_\infty$ be a $G$-equivariant N\'eron smoothening of $\tilde{Y}_\infty$,
which exists due to Theorem \ref{neronsmooth}.
Since by \cite[Corollary~4.6]{formrigI} the fiber product commutes with taking
generic fibers, 
$\tilde{Y}_\eta\cong X_\eta$, which implies that the
induced map $h: V_\infty\to X_\infty$ is a $G$-equivariant N\'eron smoothening of $X_\infty$.

We still have to show that $h_i': V_\infty\to X_\infty'^i$ factors through $U_\infty^i$.
If yes, then by construction it is automatically an open immersion on the generic fiber.
Hence assume that there is no such factorization, hence $V_\infty\setminus h_i'^{-1}(U_\infty^i)$
is not the empty formal scheme.
Then there is in particular a closed point in the special fiber of $V_\infty$ 
with residue field $F$ not mapped to $U_\infty^i$.
As $V_\infty$ is by assumption smooth over $R$,
this point extends to a $R'$-point of $V_\infty$.
Here $R'$ is an unramified extension of $R$ with residue field $F$.
Hence we have a $R'$-point of $X_\infty$ and hence a $K'$-point of $X_\eta$
not coming from an $R'$-point of $U_\infty^i$.
This contradicts to the assumption that $U_\infty^i$ is a weak N\'eron model of $X_\infty$.
\end{proof}

\subsection{Equivariant weak N\'eron models for ramifications}
\label{weak neron ramification}
The aim of this subsection is to explicitly construct equivariant N\'eron smoothenings
for some ramification models.
They will be used to explicitly compute equivariant Poincar\'e
series in Section \ref{computation},
and to compare them with Denef and Loeser's motivic zeta function in Section \ref{comparison}.

Throughout this subsection, we assume that $R$ is a complete discrete valuation ring of equicharacteristic zero, and that its residue field contains all roots of unity.
 If we do not assume that $k$ contains all roots of unity,
 then we need to consider actions of group schemes instead of abstract groups.
 In order to keep everything as simple as possible, we do not consider this case.
Moreover, we fix a regular $stft$ formal $R$-scheme $X_\infty$,
whose special fiber $X_0$ is a simple normal crossing divisor
$\sum_{i\in I}N_i E_i$ with $I=\{1,\dots,r\}$.

\begin{nota}\label{noation EJ}
Let $D=\sum_{i\in I}N_i E_i$ be a simple normal crossing divisor.
For any subset $J\subset I$,
we consider the non-singular varieties 
\[
E_J=\hspace{-2pt}\bigcap _{j\in J}\hspace{-2pt}E_j, \text{ and } E_J^o:=E_J\hspace{-3pt}\setminus\hspace{-6pt} \bigcup_{i\in I\setminus J}\hspace{-6pt} E_i.
\]
If $J=\{i\}$, we set $E_i^o:=E_J^o$.
Set moreover $m_J:=\text{gcd}\{N_i\mid i\in J\}$.
\end{nota}

\begin{defn}[\cite{NiSe}, Chapter 4]
\label{def tildeE}
For each non-empty subset $J\subset I$, we
can cover $E_J^o\subset X_\infty$  by finitely many affine open
formal subschemes $U_\infty=\mathrm{Spf}(V)$ of $X_\infty$, such
that on $U_\infty$, $t=u\prod _{i\in J}x_i^{N_i}$, with $t$ a uniformizing parameter of $R$, $u$ a unit in $V$, and the $x_i$ are local coordinates.
 The
restrictions over $E_J^o$ of the \'etale covers
$U_\infty':=\mathrm{Spf}(V\{T\}/(uT^{m_J}-1))$
of $U_\infty$
glue together to an \'etale cover $\widetilde{E}_J^o$ of $E_J^o$.

Let $\mu_{m_J}$ be the abstract group of $m_J$-th roots of unity.
This group acts on ${U}_\infty'$ by sending $T$ to $\xi T$ for every $\xi\in \mu_d$.
Note that these actions glue to a good action of $\mu_{m_J}$ on $\widetilde{E}_J^o$.

Take any integer $d$ such that $m_J$ divides $d$,
and let $\mu_d$ be the group of $d$-th roots of unity.
Then the quotient map $\mu_d\to \mu_{m_J}$ defines an action of $\mu_d$ on $\widetilde{E}_J^o$.
It is explicitly given by sending $T$ to $\xi^{\frac{d}{m_J}}T$ for all $\xi\in \mu_d$.
\end{defn}

\begin{rem}\label{tildee dl}
 Let $X$ be a smooth variety over a field $k$ of characteristic $0$,
 and let $f: X\to \mathbb{A}_k^1$ be a non-constant  morphism of $k$-varieties.
 Assume that $X_0:=f^{-1}(0)$ is a simple normal crossing divisor $\sum_{i\in I}N_i E_i$.
 Take $J\subset I$, and let $\mu_{m_J}$ be again the group of $m_J$-th roots of unity. 

 In this setting, \cite[3.3]{DL3} introduce an unramified Galois cover $\tilde{E}_J^o$ of $E_J^o$ with Galois group 
$\mu_{m_J}$ as follows:
 $E_J^o$ can be covered by such affine open subset $U$ of $X$,
such that, on $U$, $f=uv^{m_J}$,
with $u$ a unit on $U$ and $v$ a morphism from $U$ to $\mathbb{A}_k^1$.
Then the restriction of $\tilde{E}_J^o$ above $E_J^o\cap U$,
denoted by $\tilde{E}_J^o\cap U$, is defined as 
\[
 \{(z,y)\in \mathbb{A}_k^1\times(E_J^o\cap U)\mid z^{m_J}=u^{-1}\}.
\]
Gluing together the $\tilde{E}_J^o\cap U$ in the obvious way,
we obtain the cover $\tilde{E}_J^o$ of $E_J^o$,
which has a natural $\mu_{m_J}$-action
(obtained by multiplying the $z$-coordinate with the elements of $\mu_{m_J}$).

It is easy to see that
if  $X_\infty$ is the formal completion of $X$ at $X_0$,
then this definition of $\tilde{E}_J^o$ agrees
with Definition \ref{def tildeE}.
\end{rem}

\begin{defn}\label{defX0lin}
Let $J$ be a non-empty subset of $I$. We say that an integer $d\geq1$ is \emph{$J$-linear}
if there exists for every $j\in J$ an integer $\alpha_j\geq 1$ such that
$d=\sum _{j\in J}\alpha_jN_j$.
We say that $d$ is \emph{$X_0$-linear} if there exists a non-empty subset $J\subset I$
with $\lvert J \rvert>1$ and $E_J^o\neq \emptyset$, such that $d$ is $J$-linear.
\end{defn}
%

\noindent
Let us fix some more notation:
let $d>0$ be an integer,
and denote by $R(d)$ a finite totally ramified extension of $R$,
on which $\Gal(K(d)/K)\cong \mu_d$ acts nicely, see Example~\ref{ex action on r}.
Here $\mu_d$ is again the abstract group of $d$-th roots of unity.
Moreover, consider $X_\infty(d):=X_\infty\times_R R(d)\in (stft/R(d),G)$
as in Example~\ref{stan ex}.

Furthermore, denote by $n:\widetilde{X_\infty(d)}\rightarrow X_\infty(d)$ the normalization
of the formal scheme $X_\infty(d)$, see \cite[Chapter 2.1]{conrad},
and set
\[
 \widetilde{E}(d)_i^o;=E_i^o\times_{X_\infty(d)}\widetilde{X_\infty(d)}.
\]
We now describe explicitly the $\mu_d$-equivariant N\'eron smoothening for $X_\infty(d)$,
in the case that $X_0$ is not $d$-linear.
This theorem was already proved without group actions in
\cite[Theorem~4.5]{NiSe}.

\begin{thm}\label{neron}
There is a unique good $\mu_d$-action on $\widetilde{X_\infty(d)}$
such that $n$ is $\mu_d$-equivariant.
If $d$ is not $X_0$-linear, then
\[
n:Sm(\widetilde{X_\infty(d)})\rightarrow X_\infty(d)
\]
is a $\mu_d$-equivariant
N\'eron $R$-smoothening of $X_\infty(d)$. Moreover,
\[
Sm(\widetilde{X_\infty(d)})\times_R k=\hspace{-3pt}\bigsqcup_{N_i|d}\hspace{-3pt}\widetilde{E}(d)^o_i.
\]
For every $i\in I$ with $N_i|d$, $\widetilde{E}(d)^o_i$ is
$\mu_d$-closed, and there exists a $\mu_d$-equivariant
isomorphism $\widetilde{E}(d)^o_i\to \widetilde{E}^o_i$ over
$E_i^o$.
\end{thm}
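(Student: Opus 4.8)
The plan is to take the non-equivariant statement \cite[Theorem~4.5]{NiSe} as a black box and to install the $\mu_d$-equivariance at each stage. The group $\mu_d$ acts on $X_\infty(d)$ by Example~\ref{stan ex}, coming from the right action on $R(d)$ of Example~\ref{ex action on r}. First I would produce the unique good $\mu_d$-action on $\widetilde{X_\infty(d)}$. Since $n$ is a finite birational morphism with normal source, the universal property of normalization (see \cite[Chapter 2.1]{conrad}) shows that every automorphism $g$ of $X_\infty(d)$ lifts uniquely to an automorphism $\tilde g$ of $\widetilde{X_\infty(d)}$ with $n\circ\tilde g = g\circ n$; applying this to $g$ and $g^{-1}$ gives that $\tilde g$ is an automorphism, and uniqueness forces these lifts to assemble into a group action. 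Goodness is automatic: $n$ is finite, hence affine and $\mu_d$-equivariant, so the $n$-preimage of a $\mu_d$-invariant affine open cover of $X_\infty(d)$ is a $\mu_d$-invariant affine open cover of $\widetilde{X_\infty(d)}$.

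Next I would treat the N\'eron smoothening property. Because automorphisms send smooth points to smooth points, the action restricts to $\Sm(\widetilde{X_\infty(d)})$ and the open immersion $\Sm(\widetilde{X_\infty(d)})\hookrightarrow\widetilde{X_\infty(d)}$ is $\mu_d$-equivariant. As $R$ has equicharacteristic zero, the generic fiber $X_\eta(d)$ is smooth, hence normal, so $n$ induces an isomorphism $\widetilde{X_\infty(d)}_\eta\cong X_\eta(d)$. Therefore taking $X'_\infty(d):=\widetilde{X_\infty(d)}$ supplies exactly the data required in the definition of a $\mu_d$-equivariant N\'eron smoothening, once we know $\Sm(\widetilde{X_\infty(d)})$ is a weak N\'eron model; the latter, together with the disjoint-union description of $\Sm(\widetilde{X_\infty(d)})\times_R k$, is precisely \cite[Theorem~4.5]{NiSe} under the hypothesis that $d$ is not $X_0$-linear. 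The non-$X_0$-linearity guarantees that over the deeper strata $E_J^o$ with $\lvert J\rvert\ge 2$ the normalization stays singular, so those strata miss $\Sm(\widetilde{X_\infty(d)})$, leaving only the components indexed by $i$ with $N_i\mid d$.

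The remaining, genuinely new content is the local identification of the components together with their $\mu_d$-action. I would work on a chart $U_\infty=\Spf(V)$ as in Definition~\ref{def tildeE}, where $t=u\prod_i x_i^{N_i}$, so that after base change $t(d)^d=u\prod_i x_i^{N_i}$. Over the stratum $E_i^o$ with $N_i\mid d$ the element $w:=x_i/t(d)^{d/N_i}$ is integral, satisfies $uw^{N_i}=1$, and generates the normalization there; this identifies the restriction of $\widetilde{X_\infty(d)}$ over $E_i^o$ with $\Spf(V\{T\}/(uT^{N_i}-1))$ over $E_i^o$, that is, with $\widetilde E_i^o$ (note $m_{\{i\}}=N_i$). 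Since $\mu_d$ acts trivially on $X_\infty$ and on the residue field, each $E_i^o$ is $\mu_d$-invariant, whence $\widetilde E(d)_i^o=E_i^o\times_{X_\infty(d)}\widetilde{X_\infty(d)}$ is $\mu_d$-closed. Moreover $\xi\in\mu_d$ sends $t(d)\mapsto\xi t(d)$ while fixing $x_i$ and $u$, so $w\mapsto\xi^{-d/N_i}w$, which matches the $\mu_d$-action of Definition~\ref{def tildeE} up to the inversion relating the left action on $\widetilde E_i^o$ to the right Galois action of Example~\ref{ex action on r}.

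I expect the main obstacle to be gluing: I must check that these chart-by-chart $\mu_d$-equivariant identifications $w\leftrightarrow T$ are compatible on overlaps, so that the local isomorphisms assemble into a single global $\mu_d$-equivariant isomorphism $\widetilde E(d)_i^o\to\widetilde E_i^o$ over $E_i^o$. Concretely, this amounts to verifying that the transition functions produced by the normalization agree, equivariantly, with the transition data used in Definition~\ref{def tildeE} to glue the \'etale covers $\Spf(V\{T\}/(uT^{N_i}-1))$; the unit $u$ and hence $w$ are only defined up to the relevant $N_i$-th roots of unity, so the bookkeeping of these roots across charts, compatibly with the $\mu_d$-action, is the delicate point. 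The non-equivariant glueing is already handled in \cite{NiSe}, so the task is to confirm that the added $\mu_d$-structure is preserved throughout, which the explicit formula $w\mapsto\xi^{-d/N_i}w$ makes tractable.
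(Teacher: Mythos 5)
Your proposal is correct and follows essentially the same route as the paper: the unique lift of the $\mu_d$-action via the universal property of normalization, goodness from affineness of $n$, citing \cite[Theorem~4.5]{NiSe} for the non-equivariant weak N\'eron model statement and the decomposition of the special fiber, and the same local computation identifying $\widetilde{U_\infty(d)}$ with $\mathrm{Spf}(V(d)\{T\}/(t(d)^{d/N_i}T-x_i,\,uT^{N_i}-1))$ with action $T\mapsto \xi^{-d/N_i}T$. The gluing concern you flag is handled implicitly in the paper for the same reason you suggest — the identification $T=x_i/t(d)^{d/N_i}$ uses exactly the local data $(u,x_i)$ entering Definition~\ref{def tildeE}, so the transition functions agree by construction.
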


\begin{proof}
Take any $\xi\in \mu_d$, and denote by $\xi$ also the corresponding automorphism of $X_\infty(d)$.
Then $\xi\circ n: \widetilde{X_\infty(d)}\to X_\infty(d)$
is also a normalization of $X_\infty (d)$ by \cite[Theorem 2.1.2]{conrad}.
Hence by the universal property of a normalization we get a unique induced automorphism $\xi'$ of $\widetilde{X_\infty(d)}$
with $n\circ \xi'=\xi\circ n$.
Doing so for every $\xi\in \mu_d$, we get a unique
$\mu_d$-action on $\widetilde{X_\infty(d)}$ such that $n$ is $\mu_d$-equivariant.
This action is good, because
inverse images under the normalization morphism of affine open subsets are affine by construction of the normalization.

As the action of $\mu_d$ on  $\widetilde{X_\infty(d)}$ restricts to $\Sm(\widetilde{X_\infty(d)})$, it follows together with 
\cite[ Theorem~4.5]{NiSe} that $h$ is in fact a $\mu_d$-equivariant N\'eron smoothening.
Also the decomposition of the special fiber of $\Sm(\widetilde{X_\infty(d)})$ was already proved there.

Denote by $\pi: X_\infty(d)\to X_\infty$ the projection map,
which is $\mu_d$-equivariant for the Galois action on $X_\infty(d)$ and the trivial action on $X_\infty$.
Now $\tilde{E}(d)_i^o=(\pi\circ n)^{-1}(E_i^o)$, and hence it is as inverse of a $\mu_d$-invariant subscheme $\mu_d$-invariant.

For any $i\in I$ with $N_i|d$, we can cover $E_i^o$ by affine open
formal subschemes $U_\infty=\mathrm{Spf}(V)$ such that on
$U_\infty$, we can write $t=ux_i^{N_i}$, with $u$ a unit, and $x_i$ local coordinates.
It was shown in the proof of \cite[Lemma 4.4]{NiSe} that
$\widetilde{U_\infty(d)}$ is given by
\[
\widetilde{U_\infty(d)}=\mathrm{Spf}(V(d)\{T\}/(t(d)^{d/N_i}T-x_i,uT^{N_i}-1)),
\]
where $V(d)$ denotes $V\otimes_R R(d)$, and $t(d)$ is a uniformizing parameter of $R(d)$.
Let $\mu_d$ act on $\widetilde{U(d)}_\infty$ by sending $t(d)$ to $\xi t(d)$
and $T$ to $\xi^{-d/N_i}T$ for all $\xi\in \mu_d$.
One can easily check that this is well defined.
Moreover ${\widetilde{U(d)}_\infty\to \Spf(V(d))}$ is $\mu_d$-equivariant,
because the action on $V(d)=V\otimes_R R(d)$ is given by sending $t(d)$ to $\xi t(d)$
for all $\xi\in \mu_d$. 
Using the uniqueness of the $\mu_d$-action on $\widetilde{X_\infty(d)}$,
this coincides with the action constructed above.

Now restrict the action to $\widetilde{U(d)}_\infty\times_Rk\cong \Spec(V[T]/(x_i, uT^{N_i}-1))$.
It is given by by sending $T$ to $\xi^{-d/N_i}T$ for all $\xi\in \mu_d$.

Now $U_0:=U_\infty\times_R \Spec(k)=\Spec(V/(t))$, and as $E_i^o$ is reduced and $u^{-1}t=x_i^{N_i}$,
the restriction of $E_i^o$ to $U_0$ is $\Spec(V/(x_i))$.
This implies that the restriction of $\tilde{E}_i^o$ to $U$ is given by
$\Spec(V[T]/(x_i,uT^{N_i}-1))$, on which
 $\mu_d$ acts by sending $T$
to $\xi^{\frac{d}{N_i}}T$ for all $\xi\in \mu_d$, see Definition \ref{def tildeE}.
Hence in particular there
exists a $\mu_d$-equivariant isomorphism $\widetilde{E}(d)^o_i\cong
\widetilde{E}^o_i$ over $E_i^o$.
\end{proof}

\noindent
To get rid of the assumption that $d$ is not $X_0$-linear later on,
we will need the following technical lemma.
This lemma was already proved in 
\cite[Lemma 7.5]{NiSe} without group action.
To make the two results comparable, we stick to the notation in \cite{NiSe}.

\begin{lem}\label{correction nonlin}
Let $J\subset I$ with $\lvert J \rvert >1$,
and let $\pi_X: X_\infty '\to X_\infty$ the formal blow-up with center $E_J$.
Denote the exceptional divisor by $E_0'$,
and the strict transform of $E_i$ by $E_i'$ for all $i\in I$.
Then for each subset $K\subset I$ with $J\!\setminus\! K \neq \emptyset$ we have
\begin{align}\label{annoying formula}
 [\tilde{E}'^o_{K\cup \{0\}}]=(\mathbb{L}_{X_0}-[X_0])^{\lvert J\setminus K\rvert -1}[\tilde{E}^o_{J\cup K}]\in K_0^{\mu_{m_{K\cup \{0\}}}}(\Var_{X_0}).
\end{align}
\end{lem}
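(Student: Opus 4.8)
The plan is to follow the non-equivariant class computation of \cite[Lemma 7.5]{NiSe} and to upgrade it by tracking the $\mu_{m_{K\cup\{0\}}}$-action, so that the identity holds in the equivariant Grothendieck ring. Write $n:=m_{K\cup\{0\}}$ and $d:=m_{J\cup K}$. First I would record the local structure of the blow-up. Covering $E_{J\cup K}^{o}$ by affine opens $\mathrm{Spf}(V)$ on which $t=u\prod_{i\in J\cup K}x_i^{N_i}$ with $u$ a unit and the $x_i$ local coordinates, I describe the blow-up $\pi_X$ of $E_J=\{x_j=0:j\in J\}$ in charts indexed by a pivot $j_0\in J$. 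In a chart with $j_0\in J\setminus K$ one sets $x_j=x_{j_0}y_j$; the exceptional divisor $E'_0=\{x_{j_0}=0\}$ acquires multiplicity $N_0=\sum_{j\in J}N_j$, while the strict transforms are $\{y_j=0\}$ for $j\in J\setminus\{j_0\}$ and $\{x_k=0\}$ for $k\in K\setminus J$. Reading off the stratum (impose $x_{j_0}=0$ and the equations of $E'_k$ for $k\in K$, and $y_j\neq 0$ for $j\in J\setminus K$) exhibits $\widetilde{E}^{\prime o}_{K\cup\{0\}}$, over $E_{J\cup K}^{o}$, as the torus complement of the coordinate hyperplanes in the relevant directions of the exceptional $\mathbb{P}(\mathcal{N})$, $\mathcal{N}=\bigoplus_{j\in J}\mathcal{N}_{E_j}|_{E_J}$; intrinsically this is a $\mathbb{G}_m^{s}$-bundle with $s:=|J\setminus K|-1$, and on the chart the local equation becomes $t=u'\,x_{j_0}^{N_0}(\cdots)$ with unit $u'=u\prod_{j\in(J\setminus K)\setminus\{j_0\}}y_j^{N_j}$.

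Next I would set up the covers from Definition \ref{def tildeE}: $\widetilde{E}^{o}_{J\cup K}$ is locally $\{S^{d}=u^{-1}\}$ with $\mu_{d}$ acting by $S\mapsto\eta S$, while $\widetilde{E}^{\prime o}_{K\cup\{0\}}$ is locally $\{T^{n}=(u')^{-1}\}$ with $\mu_{n}$ acting by $T\mapsto\zeta T$ (as in Theorem \ref{neron}). The arithmetic input is the identity $\gcd\bigl(n,\{N_j\}_{j\in(J\setminus K)\setminus\{j_0\}}\bigr)=d$, which follows from $N_0=\sum_{j\in J}N_j$ by a short gcd manipulation; in particular $d\mid n$ and $d\mid N_j$ for the relevant indices. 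This lets me form the integral monomial $W:=T^{\,n/d}\prod_{j\in(J\setminus K)\setminus\{j_0\}}y_j^{\,N_j/d}$, which satisfies $W^{d}=u^{-1}$. Hence $(T,y,\cdot)\mapsto(W,\cdot)$ defines a morphism $p:\widetilde{E}^{\prime o}_{K\cup\{0\}}\to\widetilde{E}^{o}_{J\cup K}$ over $E_{J\cup K}^{o}$. Since $\zeta\in\mu_{n}$ sends $T\mapsto\zeta T$ and fixes the $y_j$, it sends $W\mapsto\zeta^{\,n/d}W$, so $p$ is $\mu_{n}$-equivariant once $\widetilde{E}^{o}_{J\cup K}$ is given the $\mu_{n}$-action obtained from its $\mu_{d}$-action through $\mu_{n}\to\mu_{d}$, $\zeta\mapsto\zeta^{n/d}$; this is exactly the action under which the right-hand side of (\ref{annoying formula}) is read in $K_0^{\mu_{n}}(\Var_{X_0})$.

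The core of the argument is to compute the class of $p$. Because $\gcd(n/d,\{N_j/d\})=1$, the fibres of $p$ are connected tori, so $p$ is a $\mathbb{G}_m^{s}$-torsor, equivariant for the residual scaling $\mu_{n}$-action on the fibre coordinates. I would realise this torsor $\mu_{n}$-equivariantly as the complement of the coordinate zero-sections inside a rank-$s$ vector bundle (a sum of $\mu_{n}$-equivariant line bundles) with linear, hence affine, action over the action on $\widetilde{E}^{o}_{J\cup K}$. Applying the scissor relation together with Lemma \ref{lemma local global} (applicable since $\mu_{n}$ is abelian) to each resulting affine subbundle gives, by inclusion--exclusion,
\[
[\widetilde{E}^{\prime o}_{K\cup\{0\}}]=\sum_{T\subseteq\{1,\dots,s\}}(-1)^{s-|T|}\mathbb{L}^{|T|}\,[\widetilde{E}^{o}_{J\cup K}]=(\mathbb{L}_{X_0}-[X_0])^{s}[\widetilde{E}^{o}_{J\cup K}]
\]
in $K_0^{\mu_{n}}(\Var_{X_0})$, using that $\mathbb{L}_{X_0}-[X_0]=[\mathbb{G}_{m,X_0}]$ carries the trivial action (cf. Remark \ref{compute [V]}). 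Finally I would globalise: the classes of the constructible $\mu_{n}$-invariant strata are well defined by Remark \ref{class constructible}, and additivity together with the intrinsic $\mathbb{P}(\mathcal{N})$-description (which makes $W$ and the torsor structure independent of the chosen pivot $j_0$) lets the local identities be assembled over the affine cover of $E_{J\cup K}^{o}$.

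The main obstacle I anticipate lies in the equivariant bookkeeping of the last two steps: producing a genuinely $\mu_{n}$-equivariant line-bundle structure on the torus torsor $p$, with affine action over the base, so that Lemma \ref{lemma local global} legitimately applies, and checking that the local trivialisations and the map $W$ glue $\mu_{n}$-equivariantly across charts and are independent of the pivot $j_0$. By contrast, the purely numerical ingredients (the multiplicity $N_0=\sum_{j\in J}N_j$ and the gcd identity $\gcd(n,\{N_j\})=d$) are routine, and the underlying class identity is already \cite[Lemma 7.5]{NiSe}; the new content is entirely in transporting the $\mu_{m_{K\cup\{0\}}}$-action through the torus fibration and in the compatibility between $\mu_{m_{K\cup\{0\}}}$ and $\mu_{m_{J\cup K}}$.
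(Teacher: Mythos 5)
Your proposal is correct and follows essentially the same route as the paper's proof: the same local charts with a pivot in $J\setminus K$, the same monomial map (your $W$ is exactly the paper's \'etale morphism $\varphi$ onto the pullback of $\tilde{E}^o_{J\cup K}$), the same gcd/B\'ezout identity showing the fibres are connected tori with a diagonal linear action of the stabilizer, and the same final appeal to the mechanism of Lemma \ref{lemma local global}. The only (cosmetic) difference is in the last step, where you package the torus fibration as a global equivariant $\mathbb{G}_m^{s}$-torsor inside a sum of line bundles and use inclusion--exclusion, whereas the paper computes the fibres pointwise after a B\'ezout change of variables and concludes by a spreading-out argument analogous to the proof of Lemma \ref{lemma local global}.
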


\begin{proof}
To simplify notation, set $\mu:=\mu_{m_{K\cup \{0\}}}$.
To prove the lemma, we go along the lines of \cite[Lemma 7.5]{NiSe} to examine the actions of $\mu$.
As we will be in particular interested in fibers over points in $\tilde{E}^o_{J\cup K}$,
we can replace $X_\infty$
by affine opens $U_\infty$ such that
$t=u\prod_{j\in J\cup K}x_j^{N_j}$,
with $u$ a unit, and the $x_j$ defining the $E_J$.
As $J\setminus K\neq \emptyset$, we may assume that $x_1\in J\!\setminus\! K$.
Set $J^-:=J\setminus \{1\}$,
and set $x_j'=x_j/x_1$ for $j\in J^-$, and $x_j'=x_j$ for $j\in \{1\}\cup K\setminus J^-$.
Then we can write
\[
 t=\pi_X^*(u)(x_1')^{N_0}\hspace{-10pt}\prod_{j\in J^-\cup K}\hspace{-10pt}(x_j')^{N_j}
\]
on $X_\infty'\setminus E_1'$. Here $N_0:=\sum_{i\in J}N_i$.
With this notation we get as in \cite{NiSe} that
\begin{align*}
 &\tilde{E}^o_{J\cup K}=E_{J\cup K}^o[v]/(uv^{m_{J\cup K}}-1),\\
 &G:=\tilde{E}^o_{J\cup K}\times_{E_{J\cup K}^o}E'^o_{K\cup \{0\}}=E'^o_{K\cup \{0\}}[\omega]/(\pi_X^*(u)\omega^{m_{J\cup K}}-1),\text{ and}\\
  &\tilde{E}'^o_{ K\cup \{0\}}=E'^o_{K\cup \{0\}}[z]/(\pi_X^*(u)\hspace{-10pt}\prod _{j\in J^-\setminus K}\hspace{-10pt}z^{m_{K\cup \{0\}}}-1).
\end{align*}
To simplify notation, set from now on $m:=m_{J\cup K}$ and $n:=m_{K\cup \{0\}}$.
Using Definition \ref{def tildeE} we get that on $\tilde{E}^o_{J\cup K}$,
$\mu_m$ acts by multiplying $v$ with the elements of $\mu_{m}$,
and on $\tilde{E}'^o_{K\cup \{0\}}$, $\mu$ acts by multiplying $z$ with elements of $\mu$.
Moreover, $\mu_{m}$ acts on G by multiplying $\omega$ with the elements in $\mu_{m}$.
The projection map from $G$ to $\tilde{E}_{J\cup K}^o$ is equivariant with this $\mu_m$-actions.
As $m=\gcd\{N_j\mid j\in J\cup K\} $
divides $n=\gcd\{\sum_{i\in J}N_i,N_j\mid j\in K\}$,
we can view the $\mu_{m}$-actions
as $\mu$-action.
This is done by sending $\xi\in \mu$
to $\xi^{\frac{n}{m}}\in \mu_m$.

Now we can define an \'etale morphism $\varphi$ from $\tilde{E}'^o_{K\cup \{0\}}$ to $G$
given by
\[
\omega \mapsto  z^{\frac{n}{m}}\hspace{-10pt}\prod_{j\in J^-\setminus K}\hspace{-10pt}(x_j')^{\frac{N_j}{m}}.
\]
One checks easily that $\varphi$ is equivariant with the given $\mu$-actions.
Hence we get a $\mu$-equivariant isomorphism
\[
 \tilde{E}'^o_{K\cup \{0\}}\cong G[z]/(z^{\frac{n}{m}}- \omega \hspace{-10pt}\prod_{j\in J^-\setminus K}\hspace{-10pt}(x_j')^{-\frac{N_j}{m}}).
\]
Now take any $x\in \tilde{E}^o_{J\cup K}$ with residue field $k(x)$.
Then as shown in \cite[Lemma~7.5]{NiSe} the fiber $E_x$ over $x$ in $\tilde{E}'^o_{K\cup \{0\}}$
is isomorphic to
\begin{align*}
\Spec(k(x)[z,x_j',x_j'^{-1}]_{j\in{J^-\setminus K}}/(z^{\frac{n}{m}}- \omega(x) \hspace{-10pt}\prod_{j\in J^-\setminus K}\hspace{-10pt}(x_j')^{-\frac{N_j}{m}})).
\end{align*}
Here $\omega(x)$ is the value of the function $\omega$ in $k(x)^\times$.
Consider the stabilizer $\mu_x$ of $x$, which is a subgroup of $\mu$.
Note that $\mu_x$ might act non-trivially on $k(x)$.
As the map $\tilde{E}'^o_{K\cup \{0\}}\to \tilde{E}_{J\cup K}^o$ is $\mu$-equivariant and hence $\mu_x$-invariant,
$\mu_x$ acts on $E_x$.
This action is given by sending $z$ to $\xi z$ and $\omega(x)$ to $\xi^{-\frac{ n}{m}}\omega(x)$ for all $\xi\in \mu_x\subset \mu$.
The action on $k(x)$ agrees with the action on $x=\Spec(k(x))$.

We can now describe $E_x$ using \cite[Lemma 7.4]{NiSe}:
by B\'ezout there exist $\alpha$ and $\beta_i$
such that $\alpha \frac{n}{m}-\sum_{j\in J^-\setminus K}\beta_j \frac{N_j}{m}=1$.
So we get
\[
\omega(x)^{-\alpha \frac{n}{m}} (z^{\frac{n}{m}}- \omega(x) \hspace{-10pt}\prod_{j\in J^-\setminus K}\hspace{-10pt}(x_j')^{-\frac{N_j}{m}})
=(z\omega(x)^{-\alpha})^{\frac{n}{m}}- \hspace{-10pt}\prod_{j\in J^-\setminus K}\hspace{-10pt}(x_j'\omega(x)^{\beta_j})^{-\frac{N_j}{m}},
\]
hence we can replace $z$ by $y:=\omega(x)^{-\alpha}z$
and $x_j'$ by $y_j:=\omega(x)^{\beta_j}x_j'$.
Hence we get
\[
 E_x=\Spec(k(x)[y,y^{-1}\hspace{-3pt},y_j,y_j^{-1}]_{j\in{J^-\setminus K}}/(1-  y^{\frac{n}{m}}\hspace{-10pt}\prod_{j\in j^-\setminus K}\hspace{-10pt}(y_j)^{-\frac{N_j}{m}})),
\]
and the action of $\mu_x$ is given by sending 
$y$ to $\xi^{-\frac{n}{m}\alpha}y$ and $y_i$ to $\xi^{\frac{n}{m}\beta_i}y_i$
for all $\xi\in \mu_x$.
As shown in \cite[Lemma~7.4]{NiSe}, a change of variables gives us
\[
 E_x=\Spec(k(x)[\tilde{y}_j,\tilde{y}_j^{-1}]_{j\in J^-\setminus K})=\Spec(\hspace{-9pt}\bigotimes_{j\in J^-\setminus K} \hspace{-9pt}k(x)[\tilde{y}_j,\tilde{y}_j^{-1}]),
\]
with $y_j=y^{a_j}\prod_{i\in J -\setminus K}y_j^{a_{ji}}$ for some $a_j,a_{ij}\in \mathbb{Z}$.
One computes that the action of $\mu_x$ is given by sending $\tilde{y}_i$ to $\xi^{n_i}\tilde{y}_i$ for some $n_i\in \mathbb{N}$ for all $\xi \in \mu_x$.
Altogether $E_x\cong \mathbb{A}_{k(x)}^{\lvert J^-\setminus K\rvert}\setminus \{0\}$,
and the action on $E_x$ is the restriction of a linear action on $\mathbb{A}_{k(x)}^{\lvert J^-\setminus K\rvert}$.
With a proof analogous to that in Lemma \ref{lemma local global} it follows that
\begin{align*}
 [\tilde{E}'^o_{K\cup \{0\}}]=(\mathbb{L}_{X_0}-[X_0])^{\lvert J^-\setminus K\rvert}[\tilde{E}^o_{J\cup K}]\in K_0^{\mu}(\Var_{X_0}),
\end{align*}
and hence the claim, because $\lvert J^-\setminus K\rvert =\lvert J\setminus K\rvert$.
\end{proof}

\section{The equivariant volume Poincar\'e series}
\label{eq poincare series}
\noindent
The aim of this section to give definitions of equivariant versions
of the integral of a gauge form, \cite[Theorem-Definition 4.1.2]{motrigid},
the motivic Serre invariant, \cite[Section 4]{motrigid},
the volume Poincar\'e series and the Serre Poincar\'e series, \cite[Definition 7.2 and Definition 7.3]{NiSe}.
In Section \ref{computation} we give formulas for the equivariant
volume Poincar\'e series and the equivariant Serre Poincar\'e series,
which imply in particular that these series are rational functions.

Throughout this section, let $R$ be a complete discrete valuation ring of equal characteristic,
whose residue field $k$ contains all roots of unity.
Let $G$ be a finite abelian group, acting nicely on $R$.
Assume moreover that the action of $G$ on $R$ is tame.
These assumption will allow us to use the change of variables formula, Theorem~\ref{changevar}.

\subsection{The order of a $G$-closed gauge form}
\label{gauge}
Suppose that $X_{\infty}$ is an $stft$ formal
$R$-scheme of pure relative dimension $m$, generically smooth,
and that $\omega$ is a global section of $\Omega^m_{X_\eta/K}$,
i.e.~a \emph{gauge form} on $X_\eta$. As
$\Omega^m_{X_\eta/K}\cong \Omega^m_{X_\infty/R}\otimes_R K$, see \cite[1.5]{formrigIII},
and  $X_\infty$ is quasi-compact, we can find an integer $a\geq 0$
such that $t^a\omega$, with $t$ a uniformizer of $R$,
extends to a global section $\omega'$ of
$\Omega^m_{X_\infty/R}$.
Fix such $\omega'$ and $a$.

\begin{defn}[\cite{NiSe}, Definition~6.3]\label{dfn ordgauge}
Let $R'$ be an extension of $R$ of
ramification index one, and $\psi$ a section in
$X_\infty(R')$. The module
$M:=\psi^*\Omega^m_{X_\infty}/(\mathrm{torsion})$ is a free
$R'$-module of rank one. We define $\Ord(\omega')(\psi)$ as the
length of the $R'$-module $M/R'(\psi^*\omega')$, and set
$\Ord(\omega)(\psi):=\Ord(\omega')(\psi)-a$.
This definition
does not depend on $\omega'$ and $a$.
Identifying points
of $\Gr(X_\infty)$ with sections $\psi\in X_\infty(R')$
for some unramified extension $R'$ of $R$, we obtain
a map
$\Ord(\omega):\Gr(X_\infty)\rightarrow \Z$.
\end{defn}
%
%

\begin{defn}
We say that a gauge form $\omega$ is \emph{$G$-closed} if the
fibers of the map
$\Ord(\omega)$ are $G$-closed sets.
\end{defn}

\begin{lem}\label{Gclosed}
Take $X_\infty, Y_\infty \in (stft/R,G)$ be of pure relative dimension $m$.
Assume that $Y_\infty$ is smooth and $X_\infty$ is generically smooth.
Let $h:Y_\infty\rightarrow X_\infty$ be a $G$-equivariant morphism.
If a gauge form $\omega$ on $X_\eta$ is $G$-closed, then
 $h^*\omega$ is a $G$-closed gauge form on $Y_\eta$.
\end{lem}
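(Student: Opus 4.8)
The plan is to reduce the $G$-closedness of $h^*\omega$ to results already available by establishing, for every point $y\in\Gr(Y_\infty)$, the pointwise identity
\[
 \Ord(h^*\omega)(y)=\Ord(\omega)(\Gr(h)(y))+\Ord(\Jac_h)(y),
\]
and then observing that each of the two summands on the right is a $\Z$-valued function that is constant on $G$-orbits. Note first that $h^*\omega$ is a global section of $\Omega^m_{Y_\eta/K}$, hence a gauge form, so only the assertion about the fibers of $\Ord(h^*\omega)$ remains to be checked.

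The heart of the argument is the local proof of this identity, which is also where I expect the only real technical care to be needed. Let $y\in\Gr(Y_\infty)$ have residue field $F$, corresponding to $\psi\in Y_\infty(R')$ with $R':=\mathcal{R}(F)$ a complete discrete valuation ring of ramification index one over $R$, see Remark~\ref{R(F)}, and set $\phi:=h\circ\psi\in X_\infty(R')$, which corresponds to $\Gr(h)(y)$. Choosing $\omega'$ and $a$ extending $\omega$ as in Definition~\ref{dfn ordgauge}, the form $h^*\omega'$ extends $h^*\omega$ with the same $a$. The canonical map $h^*\Omega^m_{X_\infty/R}\to\Omega^m_{Y_\infty/R}$ pulls back along $\psi$ to a map of free rank-one $R'$-modules
\[
 \bar c\colon M_X:=\psi^*h^*\Omega^m_{X_\infty/R}/(\mathrm{torsion})\to M_Y:=\psi^*\Omega^m_{Y_\infty/R},
\]
whose cokernel has length $\Ord(\Jac_h)(y)$ by Remark~\ref{compute ordjac}; here the torsion genuinely occurs because $X_\infty$ is only generically smooth, and handling it correctly is the main point. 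Since $\bar c$ is injective and sends the class of $\phi^*\omega'$ to $\psi^*(h^*\omega')$, additivity of lengths along the filtration $R'(\psi^*h^*\omega')\subset\bar c(M_X)\subset M_Y$ gives
\[
 \Ord(h^*\omega')(\psi)=\Ord(\Jac_h)(y)+\Ord(\omega')(\phi);
\]
subtracting $a$ yields the desired identity.

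Finally I would deduce the $G$-invariance of the fibers formally. By Lemma~\ref{jac}, and in the present generically smooth case via Remark~\ref{nonsmooth jac}, the function $\Ord(\Jac_h)$ is constant on $G$-orbits. The map $\Gr(h)$ is $G$-equivariant by Proposition~\ref{Gaction}, and $\Ord(\omega)$ is constant on $G$-orbits since $\omega$ is $G$-closed; hence for all $g\in G$ we have $\Ord(\omega)(\Gr(h)(gy))=\Ord(\omega)(g\cdot\Gr(h)(y))=\Ord(\omega)(\Gr(h)(y))$, so $\Ord(\omega)\circ\Gr(h)$ is constant on $G$-orbits as well. A sum of two functions constant on $G$-orbits is again constant on $G$-orbits, so by the identity the fibers of $\Ord(h^*\omega)$ are $G$-closed, which is exactly the assertion that $h^*\omega$ is $G$-closed.
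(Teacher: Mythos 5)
Your proof is correct and follows essentially the same route as the paper: the paper simply cites \cite[Lemma 6.4]{NiSe} for the identity $\Ord(h^*\omega)=\Ord(\omega)\circ h+\Ord(\Jac_h)$ and then concludes exactly as you do, using Lemma~\ref{jac} together with Remark~\ref{nonsmooth jac} for the $G$-closedness of the fibers of $\Ord(\Jac_h)$ and the $G$-equivariance of $\Gr(h)$ plus the $G$-closedness of $\omega$ for the other summand. Your length-filtration argument is a correct in-line proof of that cited identity, so the only difference is that you supply a proof where the paper gives a reference.
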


\begin{proof}
By
\cite[Lemma 6.4]{NiSe},
$\Ord(h^*\omega)=\Ord(\omega)\circ h+\Ord(\Jac_h)$.
By Lemma \ref{jac} and Remark \ref{nonsmooth jac}, all fibers of $\Ord(\Jac_h)$ are $G$-closed.
As the fibers of $\Ord(\omega)$ are $G$-closed by assumption, the same holds for $\Ord(\omega)\circ h$,
because $h$ is $G$-equivariant.
Hence also the fibers of the sum, and hence of
$\Ord(h^*\omega)$, are $G$-closed sets.
\end{proof}

%
%

\subsection{The equivariant integral of a gauge form}
\label{integral of gauge}
We are now going to investigate the existence of a $G$-equivariant
N\'eron smoothening, Theorem \ref{neronsmooth},
to define the equivariant integral of a global gauge form.

\begin{theorem-definition}
Let $X_\infty\in (stft/R,G)$ be generically smooth, flat, and
of pure relative dimension over $R$, and let $\omega$ be a $G$-closed
gauge form on $X_\eta$. We set
\[
\int_{X_\infty}\hspace{-10pt}\lvert\omega\rvert:=\hspace{-3pt}\int_{U_\infty}\hspace{-10pt}\LL_{X_0}^{-\Ord(f^*\omega)}d\mu_{X_0}\in\mathcal{M}^G_{X_0},
\]
where $f:U_\infty\rightarrow X_\infty$ is
any $G$-equivariant N\'eron smoothening of $X_\infty$. This
integral is well defined, in particular it does not depend on the choice of $f$.
\end{theorem-definition}

\begin{proof}
By Theorem \ref{neronsmooth}, we know that
there exists a $G$-equivariant N\'eron smoothening ${f:U_\infty\to X_\infty}$.
By Lemma \ref{Gclosed}, $f^*\omega$ is $G$-closed, i.e.~the fibers of $\Ord(f^*\omega)$ are $G$-closed.
It follows from \cite[Proposition 2.3.8]{MR2885338}
that $\Ord(f^*\omega)$ takes only finitely many values and its fibers are cylinders.
As $U_\infty$ is smooth, Remark \ref{G-stabel G-equivariant} implies that $\Ord(f^*\omega)$ is naively $G$-integrable, so
$\int_{U_\infty}\hspace{-5pt}\LL^{-\Ord(f^*\omega)}d\mu_{X_0}$
is well defined.

Recall that a $G$-equivariant N\'eron smoothening
is not unique in general, see Remark \ref{not unique wnm}.
Hence we still need to show that the definition does not depend on the N\'eron smoothening.
Take two $G$-equivariant N\'eron smoothenings
${f_i:U_\infty ^i\to X_\infty}$, $i\in \{1;2\}$.
By Corollary~\ref{dominate},
we may assume that there is a $G$-equivariant map ${h:U_\infty^2\to U_\infty^1}$, which is generically an open immersion,
such that $f_1\circ h=f_2$.
As both $U_\infty^1$ and $U_\infty^2$ are weak N\'eron models of $X_\infty$,
the induced map ${U_\eta^2(K')\to U_\eta^1(K')}$ is a bijection for every unramified extension $K'/K$.
Hence we can apply the change of variables formula, Theorem \ref{changevar},
to $h$.
Recall moreover that by
\cite[Lemma~6.4]{NiSe}
$\Ord(f_2^*\omega)=\Ord(h^*f_1^*\omega)=\Ord(f_1^*\omega)\circ h+\Ord(\Jac_h)$.
Hence altogether we get that
\begin{align*}
 \int_{U^{1}_\infty}\hspace{-10pt}\LL^{-\Ord(f_1^*\omega)}d\mu_{X_0}
 =\hspace{-3pt}\int_{U^{2}_\infty}\hspace{-10pt}\LL^{-(\Ord(f_1^*\omega)\circ h+ \Ord(\Jac_h))}d\mu_{X_0}
 = \hspace{-3pt}\int_{U^{2}_\infty}\hspace{-10pt}\LL^{-\Ord(f_2^*\omega)}d\mu_{X_0}.
\end{align*}
\end{proof}

%

\subsection{The equivariant motivic Serre invariant}
\label{eq serre}

\begin{theorem-definition}\label{defn serre invariant}
Let $X_\infty \in (stft/R,G)$ be generically smooth, flat and of pure dimension $m$ over $R$.
We define the \emph{equivariant
motivic Serre invariant $S^G(X_\infty)$ of $X_\infty$} by
\[
S^G(X_\infty):=[U_0]\in K_0^G(\Var_{X_0})/(\LL-1),
\]
where $f:U_\infty\rightarrow X_\infty$ is
any $G$-equivariant N\'eron smoothening of $X_\infty$,
and $U_0$ is the special fiber of $U_\infty$. This
definition does not depend on the choice of $f$.
%
\end{theorem-definition}

\begin{proof}
Take two $G$-equivariant N\'eron smoothenings
$f_i:U_\infty ^i\to X_\infty$, $i\in \{1;2\}$.
By Corollary~\ref{dominate},
we may assume that there is a $G$-equivariant map $h:U_\infty^2\to U_\infty^1$, which is generically an open immersion,
such that $f_1\circ h=f_2$.
As both $U_\infty^1$ and $U_\infty^2$ are weak N\'eron models of $X_\infty$,
the induced map $U_\eta^2(K')\to U_\eta^1(K')$ is a bijection for every unramified extension $K'/K$.
This implies in particular that the induced map $h: \Gr(U_\infty^2)\to \Gr(U^1_\infty)$ is a bijection,
and hence $h: \Gr_n(X_\infty)\to \Gr_n(X_\infty)$ is a surjection for all $n$.

Let ${n\geq 2 \max\{\Ord(\Jac_h)\}}$, 
which exists, because, as shown in the proof of Theorem \ref{changevar}, $\Ord(\Jac_h)$ is naively $G$-integrable.
Set $J_e:=\theta_n(\Ord(\Jac_h)^{-1}(e))$ for every $e\in \mathbb{N}$.
Take any $x_n\in h(J_e)\subset\Gr_n(U^1_\infty)$ with stabilizer $G_x$.
Then it follows from Proposition~\ref{structure h} that
$(h^{-1}(x_n))^{\text{red}}$ is a $G_x$-equivariant affine bundle of rank $e$ over $x_n$ with affine $G_x$-action.
Hence  by Lemma~\ref{lemma local global}
\[
 [J_e]=[h(J_e)]\mathbb{L}_{X_0}^{e}\in K_0^G(\Var_{X_0}).
\]
As $U_\infty^1$ and $U_\infty^2$ are smooth, we can use Proposition \ref{prop structure map} to get that
\[
 [U_0^i]=[\Gr_0(U_\infty^i)]=[\Gr_n(U_\infty ^i)]\mathbb{L}_{X_0}^{-nm}\in K_0^G(\Var_{X_0}).
\]
Now we set $\mathbb{L}$ equal to $1=[X_0]$ in $K_0^G(\Var_{X_0})$, and get
\begin{align*}
 [U_0^1]=[\Gr_n(U_\infty^1)]=\hspace{-3pt}\sum [J_e]
 =\hspace{-3pt}\sum[h(J_e)]=[\Gr_n(U_\infty^2)]=[U_0^2]\in K_0^G(\Var_{X_0})\!/\!(\LL-1).
\end{align*}
\end{proof}

\begin{rem}\label{rem specializing}
Assume that  $X_\eta$ admits a $G$-closed global gauge form $\omega$, and
let $f: U_\infty\to X_\infty$ be a $G$-equivariant N\'eron smoothening of $X_\infty$. 
By \cite[4.3.1]{motrigid}
the function $\Ord(f^*\omega)$ is constant on
$\theta_0^{-1}(D)$ for every connected component $D$ of $U_0$.
As $\Ord(f^*\omega)$ is $G$-closed, this implies 
that it is constant with value $\Ord_C(f^*\omega)$ on the $G$-stable cylinder
$\theta_0^{-1}(C)$, where $C$ is the orbit of $D$.
Denote by $GC(U_0)$ the set of orbits on the connected components of $U_0$. With this notation we get that
\[
 \int_{X_\infty}\hspace{-10pt}\lvert \omega \rvert=\LL^{-m}\hspace{-12 pt}\sum_{C\in G\mathcal{C}(U_0)}\hspace{-12pt}[C]\LL^{-\Ord_{C}(f^*\omega)}\in \mathcal{M}^G_{X_0}.
\]
Hence $S^G(X_\infty)$ is the image of
$\int_{X_\infty}\hspace{-3pt}|\omega|$ under the projection morphism
\[
\mathcal{M}^G_{X_0}\rightarrow\mathcal{M}^G_{X_0}/(\LL-1)\cong K_0^G(\Var_{X_0})/(\LL-1).
\]
\end{rem}

\subsection{Equivariant Poincar\'e series}
\label{eq poincare}

We suppose now that $k$ has characteristic zero.
Let $X_\infty$ be a
generically smooth, $stft$ formal $R$-scheme of pure dimension $m$.
Recall that for any integer $d>0$, 
$\Gal(K(d)/K)\cong\mu_d$, the group of $d$-th roots of unity,
acts on $R(d)$ and $X_\infty(d)$,
see Example \ref{ex action on r} and Example \ref{stan ex}.
If $\omega$ is a gauge form on $X_\eta$, we denote
by $\omega(d)$ the pullback of $\omega$ to the generic fiber ob $X_\infty(d)$.
By construction $\omega(d)$ is a $\mu_d$-closed gauge form. 

Recall that the groups $\mu_d$ form a projective system with respect to the quotient maps $\mu_d'\to \mu_d$ which we have whenever $d$ divides $d'$.
We denote by $\hat{\mu}$ the projective limit of the $\mu_d$.
By construction $\mu_d$ is a quotient of $\hat{\mu}$
for all $d$.
Hence we can view the integral
$\int_{X_\infty(d)}\lvert\omega(d)\rvert$ and the motivic Serre invariant
$S^{\mu_d}(X_\infty(d))$ as elements in
$\mathcal{M}^{\hat{\mu}}_{X_0}$ and
$K^{\hat{\mu}}_0(\Var_{X_0})/(\LL-1)$, respectively.
Here $\hat{\mu}$ acts trivially on $X_0$, the special fiber of $X_\infty$,
which is also the special fiber of $X_\infty(d)$.

\begin{defn} \label{volume}
For any integer $d>0$,
we put
\[
F(X_\infty,\omega;d):=\int_{X_\infty(d)}\hspace{-21pt}\lvert\omega(d)\rvert\in \mathcal{M}^{\hat{\mu}}_{X_0}.
\]
This defines a function
$F(X_\infty,\omega)\,:\,\N\rightarrow \mathcal{M}^{\hat{\mu}}_{X_0}$
which we call the \emph{equivariant local singular series}
associated to the pair $(X_\infty,\omega)$.
The \emph{equivariant volume Poincar\'e series}
$S(X_\infty,\omega;T)$ of the pair $(X_\infty,\omega)$ is the
generating series
\[
S(X_\infty,\omega;T)=\sum_{d>0}F(X_\infty,\omega;d)T^d\in \mathcal{M}^{\hat{\mu}}_{X_0}\Pol T \Por.
\]
\end{defn}

\begin{defn}\label{serreser}
The \emph{equivariant Serre Poincar\'e series} $S(X_\infty;T)$ of $X_\infty$ is the
generating series
\[
S(X_\infty;T)=\sum_{d>0}S^{\mu_d}(X_\infty(d))T^d\in K^{\hat{\mu}}_0(\Var_{X_0})/(\LL-1)\Pol T \Por.
\]
\end{defn}

\begin{rem}\label{rem specializing2}
Definition \ref{serreser} does not require that $X_\eta$ admits a
global gauge form, see Theorem-Definition \ref{defn serre invariant}.
If it does,
then by Remark \ref{rem specializing} the series $S(X_\infty,\omega;T)$ specializes to the Serre
Poincar\'e series $S(X_\infty;T)$ under the morphism
\[
\mathcal{M}^{\hat{\mu}}_{X_0}\Pol T \Por\rightarrow\mathcal{M}^{\hat{\mu}}_{X_0}/(\LL-1)\Pol T \Por\cong K^{\hat{\mu}}_0(\Var_{X_0})/(\LL-1)\Pol T \Por.
\]
\end{rem}

%

\subsection{Computation of the equivariant Poincar\'e series}
\label{computation}
The aim of this subsection is to give explicit formulas for the equivariant Poincar\'e series
and the equivariant Poincar\'e series.
We will need these formulas to compare the equivariant Poincar\'e series with
Denef and Loeser's motivic zeta function in Section \ref{applications}.
To get the formulas, we will use Section \ref{weak neron ramification},
in particular the explicit weak N\'eron model constructed in Theorem~\ref{neron}.
Note that similar formulas were already proved in \cite[Theorem~7.6 and Corollary 7.7]{NiSe} in the non-equivariant case.

We will use the same assumptions and notations as in Section \ref{eq poincare}.
Moreover, we fix  an \emph{embedded resolution} $h:{X'_\infty\rightarrow X_\infty}$ of $X_\infty$,
i.e.~a morphism of flat $sftf$ formal $R$-schemes inducing
an isomorphism on the generic fiber, such that $X_\infty'$ is regular and 
such that the
special fiber $X'_0=\sum_{i\in I} N_i E_i$
is a simple normal crossing divisor.
For all $J\subset I$, let $\tilde{E}_J^o$ and $\tilde{E}_i^o$ be defined as in
Definition \ref{def tildeE}.

\begin{thm}\label{explicit}
Let $\omega$ be a
gauge form on $X_\eta$, and let
$\mu_i$ be the order of $\omega$ of any point in $E_i$.
For any integer
$d>0$, we have that
\[
F(X_\infty,\omega;d)
 =\LL^{-m}\hspace{-5pt}\sum_{\emptyset\neq J\subset I}\hspace{-5pt}(\LL-1)^{|J|-1}[\widetilde{E}_J^{o}]
(\hspace{-15pt}\sum_{\stackrel{k_i\geq1,i\in J}{\sum_{i\in J} k_iN_i=d}}\hspace{-15pt}\LL^{-\sum_i k_i \mu_i}\,)
\in \mathcal{M}^{\hat{\mu}}_{X_0}.
\]
Moreover, the equivariant volume Poincar\'e series is explicitly given by
\[
S(X_\infty,\omega;T)
=\LL^{-m}\hspace{-5pt}\sum_{\emptyset\neq J\subset I}\hspace{-5pt}(\LL-1)^{|J|-1}[\widetilde{E}_J^{o}]
\prod_{i\in J}\frac{\LL^{-\mu_i}T^{N_i}}{1-\LL^{-\mu_i}T^{N_i}}\in \mathcal{M}^{\hat{\mu}}_{X_0}\Pol T \Por.
\]
\end{thm}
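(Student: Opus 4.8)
The plan is to prove the formula for the coefficient $F(X_\infty,\omega;d)$ for each fixed $d>0$ first, and then to deduce the closed form for the series $S(X_\infty,\omega;T)$ by a purely formal generating-function manipulation. Indeed, once the coefficient formula is available, substituting it into $S(X_\infty,\omega;T)=\sum_{d>0}F(X_\infty,\omega;d)T^d$ and interchanging the (for each $d$ finite) summations lets the contribution of a fixed $J$ factor over $i\in J$ as a product of geometric series, since summing $\LL^{-\sum_i k_i\mu_i}T^{\sum_i k_iN_i}$ over all tuples $(k_i)_{i\in J}$ with $k_i\ge 1$ gives $\prod_{i\in J}\sum_{k\ge 1}(\LL^{-\mu_i}T^{N_i})^{k}=\prod_{i\in J}\frac{\LL^{-\mu_i}T^{N_i}}{1-\LL^{-\mu_i}T^{N_i}}$. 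I expect this reduction to be routine.

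For the coefficient formula I would first treat the case that $d$ is \emph{not} $X_0$-linear in the sense of Definition \ref{defX0lin}. Here Theorem \ref{neron} supplies an explicit $\mu_d$-equivariant N\'eron smoothening $n\colon\Sm(\widetilde{X'_\infty(d)})\to X_\infty(d)$ whose special fibre decomposes $\mu_d$-equivariantly as $\bigsqcup_{N_i\mid d}\widetilde{E}(d)^o_i$ with each $\widetilde{E}(d)^o_i\cong\widetilde{E}^o_i$ over $E_i^o$. By the definition of the equivariant integral of a gauge form and Remark \ref{rem specializing}, and since $\omega(d)$ is $\mu_d$-closed so that $\Ord(n^*\omega(d))$ is constant, say with value $c_i$, on the $\mu_d$-invariant piece over $E_i^o$, one obtains
\[
F(X_\infty,\omega;d)=\LL^{-m}\sum_{N_i\mid d}[\widetilde{E}^o_i]\,\LL^{-c_i}.
\]
The value $c_i=(d/N_i)\mu_i$ is then read off from the local chart $t=ux_i^{N_i}$ of Theorem \ref{neron}, in which the normalisation is $\Spf(V(d)\{T\}/(t(d)^{d/N_i}T-x_i,\,uT^{N_i}-1))$ and $\omega$ vanishes to order $\mu_i$ along $E_i$; this is exactly the non-equivariant computation of \cite{NiSe}. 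Finally, for $d$ not $X_0$-linear only the terms with $\lvert J\rvert=1$ of the claimed right-hand side survive, because any $J$ with $\lvert J\rvert>1$, $E_J^o\neq\emptyset$ and $\sum_{i\in J}k_iN_i=d$ would witness $X_0$-linearity; and for $J=\{i\}$ the constraint forces $k_i=d/N_i$, recovering the displayed expression.

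To pass to an arbitrary $d$ I would use that the left-hand side $F(X_\infty,\omega;d)$ is intrinsic, hence independent of the chosen embedded resolution, and then show that the right-hand side is likewise invariant under the blow-up $X''_\infty$ of $X'_\infty$ along a stratum $E_J$ with $\lvert J\rvert>1$. Lemma \ref{correction nonlin} provides precisely the $\mu$-equivariant identity $[\widetilde{E}'^o_{K\cup\{0\}}]=(\LL-1)^{\lvert J\setminus K\rvert-1}[\widetilde{E}^o_{J\cup K}]$ relating the new strata to the old, while the numerical data transform by the standard blow-up formulas $N_0=\sum_{i\in J}N_i$ and $\mu_0=\sum_{i\in J}\mu_i+(\lvert J\rvert-1)$; inserting these into the right-hand side and collecting terms should give equality of the two formulas for $X'_\infty$ and $X''_\infty$. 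Since any fixed $d$ can be rendered non-$X_0$-linear after finitely many such blow-ups (the multiplicities of newly created exceptional divisors grow, eventually destroying every linearity relation witnessing $X_0$-linearity of $d$), combining this invariance with the non-linear case gives the formula for $F(X_\infty,\omega;d)$ in general.

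The main obstacle I expect is this last step: verifying that the right-hand side is genuinely invariant under blow-up along $E_J$ and that every $d$ can be made non-$X_0$-linear. The delicate point is the combinatorial bookkeeping of how the classes $[\widetilde{E}^o_J]$, the multiplicities $N_i$, and the gauge-form orders $\mu_i$ recombine, all while carrying the $\hat{\mu}$-action correctly through Lemma \ref{correction nonlin}; this is exactly where the equivariant refinement over \cite{NiSe} is concentrated, together with the $\mu_d$-equivariance of the local identifications furnished by Theorem \ref{neron}.
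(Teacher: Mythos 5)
Your proposal is correct and follows essentially the same route as the paper: the non-$X_0$-linear case via the explicit $\mu_d$-equivariant N\'eron smoothening of Theorem \ref{neron} together with Remark \ref{rem specializing}, reduction of the general case to it by blow-ups along strata $E_J$ whose effect on the classes $[\widetilde{E}_J^o]$ is controlled by Lemma \ref{correction nonlin} (the paper cites \cite[Lemma 5.17]{NiSe2} for the fact that finitely many such blow-ups render $d$ non-linear, and \cite[Theorem 7.6]{NiSe} for the invariance bookkeeping you describe), and finally the formal geometric-series resummation for the generating function.
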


\begin{proof}
We go along the lines of the proof of the non-equivariant case, \cite[Theorem 7.6]{NiSe}, and show that it remains valid if we
take the $\hat{\mu}$-action into account. 

Assume that $d$ is not $X_0'$-linear.
Then we can use Theorem \ref{neron} to get that $\Sm(\widetilde{X_\infty'(d)})\to X_\infty'(d)$
is an equivariant N\'eron smoothening of $X'_\infty(d)$, and hence also of $X_\infty(d)$,
and the class of the special fiber of $\Sm(\widetilde{X'_\infty(d)})$
agrees with the sum $\sum_{i\in I, N_i\mid d}[\tilde{E}_i^o]$ in $K^{\hat{\mu}}_0(\Var_{X_0})$.
By \cite[Lemma 6.3]{NiSe} the pullback of $\omega$ to $\Sm(\widetilde{X'_\infty(d)})$
has value $\mu_id/N_i$ on $\widetilde{E}(d)_i^o$,
the pullback of $E_i$ to $\Sm(\widetilde{X'_\infty(d)})$.
Moreover, if $N_i$ divides $d$, then $\tilde{E}_i^o\cong\tilde{E}(d)^o_i$.
Hence
\begin{align}\label{eq xolin}
F(X_\infty,\omega;d)&=\mathbb{L}^{-m}\hspace{-7pt}\sum_{i\in I,N_i\mid d}\hspace{-7pt}[\tilde{E}_i^o]\LL^{\mu_id/N_i}\nonumber \\
&=\LL^{-m}\hspace{-5pt}\sum_{\emptyset\neq J\subset I}\hspace{-5pt}(\LL-1)^{|J|-1}[\widetilde{E}_J^{o}](\hspace{-15pt}\sum_{\stackrel{k_i\geq1,i\in J}{\sum_{i\in J} k_iN_i=d}}\hspace{-15pt}\LL^{-\sum_i k_i \mu_i}\,)\in \mathcal{M}^{\hat{\mu}}_{X_0}.
\end{align}
In the second equation it was used again that $d$ is not $X_0$-linear.

By \cite[Lemma 5.17]{NiSe2} we can always find a map $X_\infty''\to X_\infty'$ constructed 
by a sequence of blowups of strata $E_J$ for some $J\subset I$, such that $d$ is $X_0''$-linear and $X_\infty''\to X_\infty$
is an embedded resolution.
As by Lemma \ref{correction nonlin} Formula (\ref{annoying formula})
holds in $K_0^{\mu_{m_{J\cup K}}}(\Var_{X_0})$,
and hence also in $K_0^{\hat{\mu}}(\Var_{X_0})$ and $\mathcal{M}_{X_0}^{\hat{\mu}}$,
we can show with the same computation as in
\cite[Theorem 7.6]{NiSe} that the right hand side of Equation (\ref{eq xolin}) is invariant under blow-ups of strata $E_J$.
This implies the first part of the theorem.
The second part follows from this result with exactly the same computation
as in the prove of \cite[Corollary 7.7]{NiSe}.
\end{proof}

\noindent
We also get a similar formula for the equivariant Serre invariant and the equivariant Serre Poincar\'e series.
If $X_\eta $ admits a global gauge form, this formula follows immediately from Theorem \ref{explicit} using Remark \ref{rem specializing} and Remark \ref{rem specializing2}, respectively.
But it holds without assuming the existence of a global gauge form.

\begin{thm}\label{explicit serre}
For any integer
$d>0$, we have that
\begin{align*}
S^{\mu_d}(X_\infty(d))&=\hspace{-7pt}\sum_{i\in I, N_i|d}\hspace{-7pt}[\widetilde{E}_i^{o}]\in K^{\hat{\mu}}_0(\Var_{X_0})/(\LL-1) \text { and}\\
S(X_\infty;T)&=\sum_{i\in I}[\widetilde{E}^o_i]\frac{T^{N_i}}{1-T^{N_i}}\in K^{\hat{\mu}}_0(\Var_{X_0})/(\LL-1)\Pol T \Por.
\end{align*}
\end{thm}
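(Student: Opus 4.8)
The plan is to mirror the proof of Theorem~\ref{explicit}, but to carry the whole argument out in $K_0^{\hat\mu}(\Var_{X_0})/(\LL-1)$, where the equivariant Serre invariant lives. The point of stating this as a separate theorem is that no global gauge form is needed: by Theorem-Definition~\ref{defn serre invariant}, $S^{\mu_d}(X_\infty(d))$ is simply the class of the special fiber of \emph{any} $\mu_d$-equivariant N\'eron smoothening of $X_\infty(d)$, so it suffices to exhibit one such smoothening and read off its special fiber.

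First I would treat the case that $d$ is not $X_0'$-linear. By Theorem~\ref{neron} the map $n\colon \Sm(\widetilde{X_\infty'(d)})\to X_\infty'(d)$ is a $\mu_d$-equivariant N\'eron smoothening; since $h\colon X_\infty'\to X_\infty$ induces an isomorphism on generic fibers, so does its base change to $R(d)$, and hence $\Sm(\widetilde{X_\infty'(d)})$ is also a N\'eron smoothening of $X_\infty(d)$. Its special fiber is $\bigsqcup_{N_i\mid d}\widetilde{E}(d)_i^o$, and Theorem~\ref{neron} provides $\mu_d$-equivariant isomorphisms $\widetilde{E}(d)_i^o\cong \widetilde{E}_i^o$ over $E_i^o$. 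Applying Theorem-Definition~\ref{defn serre invariant} therefore gives
\[
S^{\mu_d}(X_\infty(d))=\Big[\bigsqcup_{N_i\mid d}\widetilde{E}(d)_i^o\Big]=\sum_{i\in I,\,N_i\mid d}[\widetilde{E}_i^o]\in K_0^{\hat\mu}(\Var_{X_0})/(\LL-1).
\]

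For general $d$, exactly as in the proof of Theorem~\ref{explicit}, \cite[Lemma~5.17]{NiSe2} lets me replace $X_\infty'$ by a further sequence of blow-ups of strata $E_J$, after which the case above applies; it then remains to check that the right-hand side $\sum_{N_i\mid d}[\widetilde{E}_i^o]$ is unchanged under a single blow-up of a stratum $E_J$ with $\lvert J\rvert>1$. This is where Lemma~\ref{correction nonlin} enters, and here the computation is genuinely lighter than in Theorem~\ref{explicit}: modulo $\LL-1$ we have $\LL=[X_0]=1$, so the correction factor $(\LL_{X_0}-[X_0])^{\lvert J\setminus K\rvert-1}$ in Formula~(\ref{annoying formula}) vanishes for $\lvert J\setminus K\rvert>1$ and equals $1$ for $\lvert J\setminus K\rvert=1$. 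Tracking the multiplicities (the exceptional divisor carries $N_0=\sum_{i\in J}N_i$, the strict transforms carry the old $N_i$) and running the same bookkeeping as in \cite[Theorem~7.6]{NiSe} shows that only the single-index contributions survive and that the total is preserved; since $S^{\mu_d}(X_\infty(d))$ is independent of the resolution by Theorem-Definition~\ref{defn serre invariant}, the first formula follows for all $d$. I expect this blow-up invariance step to be the main obstacle, even though the collapse of the $(\LL-1)$-corrections makes it much simpler than in Theorem~\ref{explicit}.

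The Poincar\'e series formula is then a formal consequence: summing over $d$ and interchanging summations,
\[
S(X_\infty;T)=\sum_{d>0}\Big(\sum_{i\in I,\,N_i\mid d}[\widetilde{E}_i^o]\Big)T^d=\sum_{i\in I}[\widetilde{E}_i^o]\sum_{\ell\geq1}T^{\ell N_i}=\sum_{i\in I}[\widetilde{E}_i^o]\frac{T^{N_i}}{1-T^{N_i}}.
\]
As a consistency check, in the presence of a global gauge form both formulas also drop out of Theorem~\ref{explicit} by specializing $\LL\mapsto1$ through Remark~\ref{rem specializing} and Remark~\ref{rem specializing2}: the factors $(\LL-1)^{\lvert J\rvert-1}$ annihilate every term with $\lvert J\rvert>1$, leaving precisely the $\lvert J\rvert=1$ contributions $\sum_{N_i\mid d}[\widetilde{E}_i^o]$, in agreement with the direct argument above.
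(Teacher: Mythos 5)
Your proposal is correct and follows essentially the same route as the paper: the non-$X_0'$-linear case via the explicit $\mu_d$-equivariant N\'eron smoothening of Theorem \ref{neron}, removal of the linearity hypothesis via blow-ups of strata and Lemma \ref{correction nonlin} (whose correction factor $(\LL-[X_0])^{\lvert J\setminus K\rvert-1}$ kills the exceptional single-index strata modulo $\LL-1$), and a formal resummation for the Poincar\'e series. The only difference is presentational --- you spell out the bookkeeping for the strict transforms and add the consistency check against Theorem \ref{explicit}, which the paper relegates to the remark preceding the theorem.
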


\begin{proof}
Assume first that $d$ is not $X_0'$-linear.
 As in the proof of Theorem \ref{explicit}, we can use Theorem \ref{neron}
 and get that
 \[
S^{\mu_d}(X_\infty(d))=\hspace{-7pt}\sum_{i\in I, N_i|d}\hspace{-7pt}[\widetilde{E}_i^{o}]\in
K^{\hat{\mu}}_0(\Var_{X_0})/(\LL-1).
\]
Take a map $X_\infty''\to X_\infty'$ as above
such that $d$ is not $X_0''$-linear.
It follows from Lemma \ref{correction nonlin} that the classes of $\tilde{E}_i'^o$ in $K^{\hat{\mu}}_0(\Var_{X_0})/(\LL-1)$
are zero if they are coming from an exceptional divisor $E'_i$,
hence we get rid of the assumption that $X_0'$ is not $d$-linear, and the first claim follows.
This implies that
\begin{align*}
S(X_\infty;T)&=\sum_{d\geq 0}T^d\hspace{-7pt}\sum_{i\in I, N_i|d}\hspace{-7pt}[\widetilde{E}^o_i]=\sum_{i\in I}[\widetilde{E}^o_i]\sum_{d'>0}T^{d'N_i}\\
&=\sum_{i\in I}[\widetilde{E}^o_i]\frac{T^{N_i}}{1-T^{N_i}}\in K^{\hat{\mu}}_0(\Var_{X_0})/(\LL-1)\Pol T \Por.
\end{align*}
\end{proof}

\noindent
By \cite[Proposition 2.5]{NiSe}, every affine generically smooth stft formal $R$-scheme admits an embedded resolution.
Here one needs that $k$ has characteristic zero.
Hence as without group actions, see \cite[Corollary 7.8]{NiSe},
we have the following corollary.

\begin{cor}\label{rationality}
Let $X_\infty$ be a generically smooth $stft$ formal $R$-scheme,
of pure relative dimension, that admits a global gauge form $\omega$ on $X_\eta$.
Then there exists a
finite subset $S$ of $\Z\times \N^{\ast}$ such that
$S(X_\infty,\omega;T)$ belongs to the ring
\[
\mathcal{M}^{\hat{\mu}}_{X_0}[T]\left[\frac{\LL^{a}T^b}{1-\LL^{a}T^b} \right]_{(a,b)\in S}\subset\mathcal{M}^{\hat{\mu}}_{X_0}\Pol T \Por.
\]
\end{cor}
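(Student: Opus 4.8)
The plan is to reduce to the affine case, where an embedded resolution is available by \cite[Proposition 2.5]{NiSe}, and then read off the rationality directly from the explicit formula of Theorem \ref{explicit}. The only work lies in the reduction; once we are on an affine piece, the product formula of Theorem \ref{explicit} is literally a finite combination of the desired elementary fractions.

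First I would fix a finite affine open cover $\{X_\infty^l\}_{l\in L}$ of $X_\infty$, which exists since every $stft$ formal scheme is quasi-compact. Because $X_\infty$ is separated, each finite intersection $X_\infty^{\mathcal{L}}:=\bigcap_{l\in\mathcal{L}}X_\infty^l$ is again affine, $stft$, flat and generically smooth. For a fixed integer $d>0$, the base changes $X_\infty^l(d)$ cover $X_\infty(d)$, and the cover is $\mu_d$-invariant because the $\mu_d$-action is carried entirely by the factor $R(d)$. Choosing a $\mu_d$-equivariant weak N\'eron model $f:U_\infty\to X_\infty(d)$ (Theorem \ref{neronsmooth}), the preimages $f^{-1}(X_\infty^l(d))$ form a $\mu_d$-invariant open cover of the smooth scheme $U_\infty$, and the restriction of a weak N\'eron model to the preimage of an open is again a weak N\'eron model of the corresponding open generic fiber. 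Applying the inclusion--exclusion formula of Remark \ref{additive} to $\int_{U_\infty}\LL^{-\Ord(f^*\omega(d))}d\mu^{\mu_d}_{X_0}$ and identifying $\bigcap_{l\in\mathcal{L}}f^{-1}(X_\infty^l(d))=f^{-1}(X_\infty^{\mathcal{L}}(d))$ therefore gives, after summing over $d>0$,
\[
S(X_\infty,\omega;T)=\sum_{\emptyset\neq\mathcal{L}\subset L}(-1)^{|\mathcal{L}|-1}S(X_\infty^{\mathcal{L}},\omega;T)\in\mathcal{M}^{\hat{\mu}}_{X_0}\Pol T\Por.
\]

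Next, for each nonempty $\mathcal{L}$, I would invoke \cite[Proposition 2.5]{NiSe} to obtain an embedded resolution of the affine scheme $X_\infty^{\mathcal{L}}$ with special fiber a simple normal crossing divisor $\sum_{i\in I_{\mathcal{L}}}N_i^{\mathcal{L}}E_i^{\mathcal{L}}$; this is the one step using $\Char k=0$. Theorem \ref{explicit} then expresses $S(X_\infty^{\mathcal{L}},\omega;T)$ as $\LL^{-m}$ times a finite $\mathcal{M}^{\hat{\mu}}_{X_0^{\mathcal{L}}}$-linear combination of products of the elementary fractions $\frac{\LL^{a}T^b}{1-\LL^{a}T^b}$ with $(a,b)=(-\mu_i,N_i^{\mathcal{L}})$. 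The forgetful map $\mathcal{M}^{\hat{\mu}}_{X_0^{\mathcal{L}}}\to\mathcal{M}^{\hat{\mu}}_{X_0}$ induced by the open immersion $X_0^{\mathcal{L}}\hookrightarrow X_0$ (under which $S(X_\infty^{\mathcal{L}},\omega;T)$ computed over $X_0^{\mathcal{L}}$ becomes the summand $S(X_\infty^{\mathcal{L}},\omega;T)$ appearing above, computed over $X_0$) is $\Z[\LL^{\pm1}]$-linear by the projection formula for open immersions, so it leaves the denominators $1-\LL^{a}T^b$ untouched and only transports the coefficient classes $[\widetilde{E}_J^o]$ into $\mathcal{M}^{\hat{\mu}}_{X_0}$. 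Hence each $S(X_\infty^{\mathcal{L}},\omega;T)$ lies in the subring attached to the finite set $S_{\mathcal{L}}:=\{(-\mu_i,N_i^{\mathcal{L}}):i\in I_{\mathcal{L}}\}$. Setting $S:=\bigcup_{\emptyset\neq\mathcal{L}\subset L}S_{\mathcal{L}}\subset\Z\times\N^{\ast}$, which is finite, the displayed inclusion--exclusion expression exhibits $S(X_\infty,\omega;T)$ as a finite $\Z$-linear combination of elements of $\mathcal{M}^{\hat{\mu}}_{X_0}[T][\frac{\LL^{a}T^b}{1-\LL^{a}T^b}]_{(a,b)\in S}$, hence an element of that ring.

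The main obstacle I anticipate is the bookkeeping in the reduction step rather than the rationality itself: one must verify carefully that Remark \ref{additive} may be applied $\mu_d$-equivariantly to the gauge-form integrals $F(X_\infty^{\mathcal{L}},\omega;d)$ through compatible restrictions of a single weak N\'eron model for each $d$, and that the various local expressions living over the different special fibers $X_0^{\mathcal{L}}$ patch, via the $\Z[\LL^{\pm1}]$-linear forgetful maps to $X_0$, into the single identity over $X_0$ used above. Once this compatibility is in place, Theorem \ref{explicit} makes the conclusion immediate.
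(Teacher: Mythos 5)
Your proof is correct and follows essentially the same route as the paper: the paper's (very terse) argument reduces to the affine case exactly as you do --- a finite affine open cover, the inclusion--exclusion of Remark \ref{additive} applied on a weak N\'eron model, and the existence of embedded resolutions for affine pieces via \cite[Proposition 2.5]{NiSe} --- and then reads off rationality from the explicit formula of Theorem \ref{explicit}. Your write-up merely spells out the bookkeeping that the paper delegates to the non-equivariant reference \cite[Corollary 7.8]{NiSe}.
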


\noindent
Hence in particular $S(X_\infty,\omega;T)$ is a rational function.
Similarly one gets that the equivariant Serre Poincar\'e series $S(X_\infty;T)$ is a rational function.

\section{Application to Denef and Loeser's motivic zeta functions}
\label{applications}
\noindent
Throughout this section,
let $X$ be an irreducible smooth variety of dimension $m+1$ over a field of characteristic zero $k$ containing all roots of unity,
together with a dominant map $f: X\to \mathbb{A}^ 1_{k}$.
Denote by $X_0$ the special fiber $f^{-1}(0)$ of $f$ over the point $0\in \mathbb{A}^1_k$.
Denote by $X_\infty$ the formal completion of $X$ along $X_0$.
This is a generically smooth $sftf$ formal scheme of relative dimension $m$ over $R:=k\Pol t \Por$.
Denote by $X_\eta$ the generic fiber of $X_\infty$.

The aim of this section is to recover Denef and Loeser's motivic zeta function of $X$ from
a special equivariant Poincare series of $X_\infty$,
namely from the equivariant motivic Weil generating series.
Moreover we define and examine the equivariant motivic volume of a formal $R$-scheme,
from which we can recover the motivic nearby cycles $\mathcal{S}_f$ of $f$.
Before we do so, we recall some definitions and fix notations.

\subsection{Jet schemes}
\label{jets}
 As for example in \cite[2.1]{DL3}, we define for any integer
$d>0$, the \emph{$d$-th jet scheme} $\mathcal{L}_d(X)$ to be the $k$-scheme representing the
functor
\[
(k-alg)\rightarrow (Sets);\ A\mapsto X(A[t]/(t^{d+1}))=\Hom_k(\Spec(A[t]/(t^{d+1})),X).
\]
Following \cite[3.2]{DL3}, we denote by $\mathcal{X}_d$ and
$\mathcal{X}_{d,1}$ the $X_0$-varieties
\begin{align*}
\mathcal{X}_{d}&:=\{\psi\in\mathcal{L}_{d}(X)\mid \Ord_tf(\psi(t))=d\} \text{ and}\\
\mathcal{X}_{d,1}&:=\{\psi\in\mathcal{L}_{d}(X)\mid f(\psi(t))=t^d\,\text{mod}\,t^{d+1}\},
\end{align*}
where the structural morphisms to
$X_0$ are given by reduction modulo $t$. Let $\mu_d$, the group of $d$-th roots of unity, act
on $\mathcal{X}_{d,1}$ by sending $\psi(t)\in \mathcal{X}_{d,1}$ to $\psi(\xi t)$ for any $\xi$ in
$\mu_d$. Hence $\mathcal{X}_{d,1}$ can be viewed as an
$X_0$-variety with good $\hat{\mu}$-action,
where $\hat{\mu}$ denotes again the projective limit of the $\mu_d$.

\begin{rem}\label{relation xd and xd1}
 Take any $d>0$.
As explained in \cite[3.2]{DL3}, we can connect $\mathcal{X}_d$ and $\mathcal{X}_{d,1}$ as follows: 
Look at the map $\varphi: \mathcal{X}_{d,1}\times \mathbb{G}_{m,k}\to \mathcal{X}_d$
given by sending $(\psi(t),a)$ to $\psi(at)$.
Let $\mu_d$ act on $\mathcal{X}_{d,1}\times \mathbb{G}_{m,k}$
by sending $(\psi(t),a)$ to $(\psi(\xi t), \xi^{-1}a)$ for all $\xi\in \mu_d$.
As $\varphi(\psi(t),a)=\psi(at)=\psi(\xi^{-1}a\xi t)=\varphi(\psi(\xi t),\xi^{-1}a)$,
$\varphi$ factors through a map $\tilde{\varphi}: (\mathcal{X}_{d,1}\times \mathbb{G}_{m,k})/\mu_d\to \mathcal{X}_d$,
which is in fact an isomorphism.
As the action on $\mathcal{X}_{d,1}\times \mathbb{A}_{k}^1$ extending the action on $\mathcal{X}_{d,1}\times \mathbb{G}_{m,k}$
is linear over the base $\mathcal{X}_{d,1}$,
we get that
\[
 [\mathcal{X}_{d,1}\times \mathbb{G}_{m,k}]=[\mathcal{X}_{d,1}]\mathbb{L}_{X_0}-[\mathcal{X}_{d,1}]\in \mathcal{M}_{X_0}^{\mu_d}.
\]
This implies, using that the quotient map on $\mathcal{M}_{X_0}^{\mu_d}$ is well defined by \cite[Corollary 8.4]{abi2}, that
\begin{equation*}
[\mathcal{X}_d]=(\LL_{X_0}-1)[\mathcal{X}_{d,1}/\mu_d] \in\mathcal{M}_{X_0}.
\end{equation*}
\end{rem}

%
%
%

\subsection{Motivic zeta functions}
\label{motivic zeta}
In \cite[3.2.1]{DL3}, the \emph{motivic zeta function} $Z(f;T)$ of $f$ is
defined as
\[
Z(f;T):=\sum_{d=1}^{\infty}[\mathcal{X}_{d,1}]\LL^{-(m+1)d}T^{d}\in \mathcal{M}^{\hat{\mu}}_{X_0}\Pol T \Por,
\]
and the \emph{naive motivic zeta function}
$Z^{naive}(T)$ is defined as
\[
Z^{naive}(f;T):=\sum_{d=1}^{\infty}[\mathcal{X}_{d}]\LL^{-(m+1)d}T^{d}\in \mathcal{M}_{X_0}\Pol T \Por.
\]

\begin{rem}\label{naive quotient}
 Using Remark \ref{relation xd and xd1} one can recover the naive motivic zeta function
 from the motivic zeta function as follows:
 \[
  Z^{\text{naive}}(f;T)=(\mathbb{L}-1)Z(f;T)/{\hat{\mu}}\in \mathcal{M}_{X_0}\Pol T \Por,
 \]
where $Z(f;T)/\hat{\mu}$ is the image of $Z(f,T)$ under the map
$q: \mathcal{M}^{\hat{\mu}}_{X_0}\Pol T\Por\to \mathcal{M}_{X_0}\Pol T \Por$
sending $\sum a_iT^i$ to $\sum a_i/\hat{\mu}\ T^i$, where $a_i/\hat{\mu}$
denotes the image of $a_i$ under the quotient map given in \cite[Corollary 8.5]{abi2}.
\end{rem}

\noindent
Let $h:X'\rightarrow X$ be an \emph{embedded resolution} for $f$, i.e.~$h$ is a proper morphism
inducing an isomorphism $Y\setminus X'_0\to X\setminus X_0$, $Y$ is smooth, and $X'_0=\sum_{i\in I}N_i E_i$ is a simple normal crossing divisor.
Let $K_{X'/X}=\sum_{i\in I}(\xi_i-1)E_i$ be the relative canonical divisor of $f$.
By \cite[Theorem~3.3.1]{DL3} we have
\begin{align}\label{formula mzf}
Z(f;T)=\hspace{-5pt}\sum_{\emptyset\neq J\subset I}\hspace{-5pt}(\LL-1)^{|J|-1}[\widetilde{E}_J^{o}]\prod_{i\in J}\frac{\LL^{-\xi_i}T^{N_i}}{1-\LL^{-\xi_i}T^{N_i}} \in \mathcal{M}^{\hat{\mu}}_{X_0}\Pol T \Por.
\end{align}
Here $\tilde{E}_J^o$ is given
by Definition \ref{def tildeE}, which agrees with the definition by Denef and Loeser, see Remark \ref{tildee dl}.
By Remark \ref{naive quotient} and the fact that $\tilde{E}^o_{J}/\hat{\mu}=E^o_J$, with $E^o_J$  as in Notation \ref{noation EJ}, Equation (\ref{formula mzf}) implies that
\begin{align*}
Z^{naive}(f;T)
=\hspace{-5pt}\sum_{\emptyset\neq J\subset I}\hspace{-5pt}(\LL-1)^{|J|}[E_J^{o}] 
\prod_{i\in J}\frac{\LL^{-\xi_i}T^{N_i}}{1-\LL^{-\xi_i}T^{N_i}} \in \mathcal{M}_{X_0}\Pol T \Por.
\end{align*}
Inspired by the $p$-adic case,
Denef and Loeser defined the \emph{motivic
nearby cycles $\mathcal{S}_f$} by taking formally the
limit of $-Z(f;T)$ for $T\to\infty$ in $\mathcal{M}_{X_0}^{\hat{\mu}}$.
By Equation (\ref{formula mzf}) this limit is well defined, and 
\begin{align}
\label{formula sf}
\mathcal{S}_f= \hspace{-5pt}\sum_{\emptyset\neq J\subset I}\hspace{-5pt}(1-\LL)^{|J|-1}[\widetilde{E}_J^{o}]\in \mathcal{M}^{\hat{\mu}}_{X_0}.
\end{align}

\subsection{Recovering the motivic zeta function}
\label{comparison}
Assume for this subsection, that $X_\eta$ admits a global gauge form $\omega$.
As in \cite[9.5]{NiSe}, we can associate to it its \emph{Gelfand-Leray form}
$\frac{\omega}{df}$.

\begin{defn}\label{local}
We define the \emph{equivariant motivic Weil generating series associated to $f$} by
$
S(f;T):=
S(X_\infty,\frac{\omega}{df};T)
\in\mathcal{M}^{\hat{\mu}}_{X_0}\Pol T \Por.
$
\end{defn}

\begin{thm}\label{comparzeta}
Let $X$ be a smooth irreducible variety over $k$ of dimension $m+1$,
and let $f:X\rightarrow \A^1_k$ be a
 dominant morphism.
 Assume that there exists a global gauge form on $X_\eta$. Then
\[
 S(f;T)=\LL^{-m}Z(f;\LL T)\in\mathcal{M}^{\hat{\mu}}_{X_0}\Pol T\Por.
\]
\end{thm}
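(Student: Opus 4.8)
The plan is to deduce the identity by matching the two explicit product expansions now at our disposal. First I would fix an embedded resolution $h : X' \to X$ of $f$ in the algebraic sense (which exists since $k$ has characteristic zero), so that $X_0' = \sum_{i \in I} N_i E_i$ is a simple normal crossing divisor and $K_{X'/X} = \sum_{i \in I}(\xi_i - 1) E_i$. Taking $t$-adic completions along the special fibers turns $h$ into an embedded resolution $X_\infty' \to X_\infty$ of the formal scheme $X_\infty$ in the sense of Section \ref{computation}, with the same numerical data $N_i$ and the same strata $E_J$, $E_J^o$. By Remark \ref{tildee dl} the \'etale covers $\widetilde{E}_J^o$ equipped with their $\hat{\mu}$-actions coming from Definition \ref{def tildeE} agree with the covers $\widetilde{E}_J^o$ that enter Denef and Loeser's formula (\ref{formula mzf}). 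Consequently, Theorem \ref{explicit} expresses $S(f;T) = S(X_\infty, \frac{\omega}{df}; T)$ and formula (\ref{formula mzf}) expresses $Z(f;T)$ as sums over $\emptyset \neq J \subset I$ built from identical classes $(\LL - 1)^{|J|-1}[\widetilde{E}_J^o] \in \mathcal{M}^{\hat{\mu}}_{X_0}$; the two sums differ only in the exponents of the geometric factors, which are $-\mu_i$ for $S(f;T)$ (with $\mu_i = \Ord(\frac{\omega}{df})$ along $E_i$) and $-\xi_i$ for $Z(f;T)$.

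The crux is the numerical identity $\mu_i = \xi_i - N_i$, valid for all $i \in I$. I would verify it by a local computation at the generic point of $E_i$: choosing coordinates $(y_1, \dots, y_{m+1})$ on $X'$ with $E_i = \{y_1 = 0\}$ and $f \circ h = u\, y_1^{N_i}$ for a unit $u$, the definition of the relative canonical divisor gives $h^* \omega = w\, y_1^{\xi_i - 1}\, dy_1 \wedge \cdots \wedge dy_{m+1}$ for a unit $w$. Writing $d(f \circ h) = y_1^{N_i - 1}(N_i u + y_1 \partial_{y_1} u)\, dy_1 + y_1^{N_i}(\cdots)$ and solving $d(f \circ h) \wedge \eta = h^* \omega$ for the Gelfand--Leray form $\eta = \frac{\omega}{df}$ shows that $\eta$ vanishes to order $\xi_i - N_i$ along $E_i$; by Definition \ref{dfn ordgauge} this is exactly $\mu_i$. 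This is the formal-scheme analogue of the corresponding computation in \cite{NiSe}, and since it only involves the relative $m$-form it is insensitive to the choice of coordinates and of the units $u,w$.

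With this identity the conclusion is immediate. Substituting $T \mapsto \LL T$ in (\ref{formula mzf}) converts each factor into $\frac{\LL^{-\xi_i}(\LL T)^{N_i}}{1 - \LL^{-\xi_i}(\LL T)^{N_i}} = \frac{\LL^{-(\xi_i - N_i)} T^{N_i}}{1 - \LL^{-(\xi_i - N_i)} T^{N_i}} = \frac{\LL^{-\mu_i} T^{N_i}}{1 - \LL^{-\mu_i} T^{N_i}}$, which is precisely the factor occurring in Theorem \ref{explicit}, while multiplying by $\LL^{-m}$ supplies the outer prefactor of $S(X_\infty, \frac{\omega}{df}; T)$. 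Since the $\hat{\mu}$-action lives entirely in the classes $[\widetilde{E}_J^o]$, which are the same objects in both formulas, the summands coincide term by term in $\mathcal{M}^{\hat{\mu}}_{X_0}\Pol T \Por$, yielding $S(f;T) = \LL^{-m} Z(f; \LL T)$. The main obstacle lies in the second step: one must ensure that the Gelfand--Leray order computation is carried out correctly at the level of the relative canonical sheaf of the formal scheme, and that the equivariant covers $\widetilde{E}_J^o$ of Definition \ref{def tildeE} and those of Denef--Loeser genuinely agree as $\hat{\mu}$-equivariant $X_0$-varieties -- the former being the content of \cite{NiSe} and the latter being guaranteed by Remark \ref{tildee dl}.
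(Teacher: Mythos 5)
Your proposal is correct and follows essentially the same route as the paper: fix an embedded resolution, identify the order of the Gelfand--Leray form along $E_i$ as $\xi_i-N_i$, and match the explicit formula of Theorem \ref{explicit} against Denef--Loeser's formula (\ref{formula mzf}) after substituting $T\mapsto \LL T$. The only difference is that the paper simply cites \cite[Lemma 9.6]{NiSe} for the identity $\mu_i=\xi_i-N_i$ where you sketch the local computation yourself.
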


\begin{proof}
Let $h: X'\to X$ be an embedded resolution 
of $f$.
Let ${X_0'=\sum_{i\in I}N_iE_i}$ be its special fiber 
and $K_{X'/X}=\sum_{i\in I}(\xi_i-1)E_i$ its relative canonical divisor.
Then by \cite[Lemma 9.6]{NiSe}
\[
 \Ord_{E_i}(\frac{h^*\omega}{d(f\circ h)})=\xi_i-N_i.
\]
With this fact the theorem follows immediately from
Theorem \ref{explicit} and Formula~(\ref{formula mzf}).
\end{proof}

\noindent
Using Remark \ref{naive quotient}, we can also recover the naive
motivic zeta function from $S(f,T)$.

\subsection{Recovering the motivic nearby cycles}\label{rec mnc}
Using Theorem \ref{comparzeta}
we can also recover $S_f$ from $S(f,T)$
by taking formally the limit of $-S(f,T)$ for $T\to\infty$
and multiplying it with $\LL^{m}$.
Due to Corollary \ref{rationality},
this limit also makes sense without assuming that the formal scheme $X_\infty$ comes from a
morphism $f$, which
leads us to the following definition, which was given in \cite[Definition 8.3]{NiSe} in the non-equivariant case.

\begin{defn}\label{eq mot vol1}
 Let $X_\infty$ be a $sftf$ formal scheme of pure relative dimension $m$ over $R$
 with smooth generic fiber $X_\eta$, which admits a gauge form $\omega$.
 We define \emph{equivariant motivic volume} $\mathcal{S}_{X_\infty}\in \mathcal{M}_{X_0}^{\hat{\mu}}$
 to be the formal limit of $-S(X_\infty, \omega; T)$ for $T\to \infty$.
\end{defn}

\noindent
Take any embedded resolution of $X_\infty$ with special fiber $\sum_{i\in I}N_iE_i$,
and let $\tilde{E}^o_J$ be given as in Definition \ref{def tildeE}.
Then Theorem \ref{explicit} implies that $\mathcal{S}_{X_\infty}$ satisfies the following formula:
\begin{align}\label{formula mot volume}
 \mathcal{S}_{X_\infty}=\mathbb{L}^{-m}\hspace{-5pt}\sum_{\emptyset\neq J\subset I}\hspace{-5pt}(1-\LL)^{|J|-1}[\widetilde{E}_J^{o}]\in \mathcal{M}^{\hat{\mu}}_{X_0}.
\end{align}
In particular the definition of $\mathcal{S}_{X_\infty}$ does not depend on $\omega$.

Now take any cover $\{X_\infty^l\}_{l\in L}$ of $X_\infty$ by open formal subschemes.
Then by construction $\{X_\infty^l(d)\}_{l\in L}$ is a $\mu_d$-invariant cover
of $X_\infty(d)$ for all $d$.
Hence Remark~\ref{additive} implies
that 
\[
F(X_\infty,\omega; d)=\hspace{-5pt}\sum_{\emptyset\neq\mathcal{L}\subset L}\hspace{-5pt}(-1)^{\lvert \mathcal{L}\rvert - 1}F(X_\infty^{\mathcal{L}},\omega;d)\in \mathcal{M}_{X_0}^{\hat{\mu}},
\]
where $X_\infty^\mathcal{L}:=\cap_{l\in \mathcal{L}}X_\infty^l$ for all $\mathcal{L}\subset L$.
Hence summing up the $F(X_\infty,\omega;d)$ we get that 
the analog equation holds also for $-S(X_\infty,\omega;T)$.
Taking formally the limit for $T$ against $\infty$,
we get
\begin{align}\label{sxinfty}
  \mathcal{S}_{X_\infty}=\hspace{-5pt}\sum_{\emptyset\neq\mathcal{L}\subset L}\hspace{-5pt}(-1)^{\lvert \mathcal{L}\rvert - 1}\mathcal{S}_{X_\infty^{\mathcal{L}}}\in \mathcal{M}_{X_0}^{\hat{\mu}}.
\end{align} 
Inspired by this equation, we can,
as in the non-equivariant case, see \cite[Section~8, Remark]{NiSe},
define $\mathcal{S}_{X_\infty}$ 
without assuming the existence of a global gauge form on $X_\eta$.
Here we use that $X_\eta$ admits a gauge form locally, because $X_\infty$ is generically smooth.

\begin{defn}\label{eq mot vol2}
Let $X_\infty$ be a $sftf$ formal generically smooth $R$-scheme of dimension $m$ over $R$.
Fix any finite cover $\{X_\infty^l\}_{l\in L}$ of $X_\infty$ by open formal subschemes,
such that $X_\eta^l$ admits a global gauge form $\omega_l$.
For all $\mathcal{L}\subset L$ set $X_\infty^{\mathcal{L}}:=\cap_{l\in \mathcal{L}}X_\infty^l$.
We define the \emph{equivariant motivic volume} of $X_\infty$ by
\[
 \mathcal{S}_{X_\infty}:=\hspace{-5pt}\sum_{\emptyset\neq\mathcal{L}\subset L}\hspace{-5pt}(-1)^{\lvert \mathcal{L}\rvert - 1}\mathcal{S}_{X_\infty^{\mathcal{L}}}\in \mathcal{M}_{X_0}^{\hat{\mu}},
\]
where $\mathcal{S}_{X_\infty^{\mathcal{L}}}$ is given by Definition \ref{eq mot vol1}.
\end{defn}

\noindent
Using Equation (\ref{sxinfty}), Definition \ref{eq mot vol1} and Definition \ref{eq mot vol2} agree
in the case that $X_\eta$ admits a global gauge form.
Moreover, if we have two covers
$\{X_\infty^l\}_{l\in L}$ and
$\{Y_\infty^l\}_{l\in L'}$ of $X_\infty$
we can compare them via the common refinement $\{X_\infty^l\cap Y_\infty^r \}_{l,r\in L\times L'}$,
so Definition~\ref{eq mot vol2} does not depend on the chosen cover.

%
If we use Formula (\ref{formula mot volume}) to compute the $\mathcal{S}_{X_\infty^\mathcal{L}}$, we get
\begin{align*}
 \mathcal{S}_{X_\infty}&=\hspace{-5pt}\sum_{\emptyset\neq\mathcal{L}\subset L}\hspace{-5pt}(-1)^{\lvert \mathcal{L}\rvert - 1}\mathbb{L}^{-m}\hspace{-5pt}\sum_{\emptyset\neq J\subset I}\hspace{-5pt}(1-\LL)^{|J|-1}[\widetilde{E}_J^{o}\times_{X_0}X_\infty^{\mathcal{L}}]\\
 &=\mathbb{L}^{-m}\hspace{-5pt}\sum_{\emptyset\neq J\subset I}\hspace{-5pt}(1-\LL)^{|J|-1}[\widetilde{E}_J^{o}]\in \mathcal{M}_{X_0}^{\hat{\mu}}.
\end{align*}
This formula and Formula (\ref{formula sf}) imply the following proposition:

\begin{prop}
 Let $X$ be a smooth, irreducible variety over $k$ of dimension $m+1$, and let $f:X\to \mathbb{A}^1_k$
 be a non-constant morphism.
 Let $X_\infty$ be the formal completion of $X$ along $X_0=f^{-1}(0)$.
 Then
 \[
  \mathcal{S}_f=\mathbb{L}^m\mathcal{S}_{X_\infty}\in \mathcal{M}_{X_0}^{\hat{\mu}}.
 \]
\end{prop}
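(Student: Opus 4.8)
The plan is to reduce both sides of the claimed identity to the same explicit expression attached to a single embedded resolution, so that the equality becomes a direct comparison of two formulas that have already been established in the excerpt.

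First I would fix an algebraic embedded resolution $h\colon X'\to X$ of $f$, which exists since $\Char k = 0$, with special fiber $X'_0=\sum_{i\in I}N_iE_i$ a simple normal crossing divisor. The crucial compatibility step is to observe that the formal completion $X'_\infty$ of $X'$ along $X'_0$ is an embedded resolution of $X_\infty$ in the sense of Section \ref{computation}: the scheme $X'_\infty$ is regular (the completion of the regular scheme $X'$ along a closed subscheme is regular), its special fiber is the same divisor $\sum_{i\in I}N_iE_i$, and the induced morphism $X'_\infty\to X_\infty$ is an isomorphism on generic fibers because $h$ is an isomorphism over $X\setminus X_0$. In particular the index set $I$, the multiplicities $N_i$, and the strata $E^o_J$ coincide for the two resolutions. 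Moreover, by Remark \ref{tildee dl}, the \'etale covers $\widetilde{E}^o_J$ produced by Definition \ref{def tildeE} from the formal resolution agree, as $X_0$-varieties with $\hat\mu$-action, with Denef and Loeser's covers entering Formula (\ref{formula mzf}), and hence entering Formula (\ref{formula sf}).

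With this common resolution in hand, Formula (\ref{formula sf}) reads
\[
\mathcal{S}_f=\sum_{\emptyset\neq J\subset I}(1-\LL)^{|J|-1}[\widetilde{E}^o_J]\in\mathcal{M}^{\hat{\mu}}_{X_0},
\]
while the computation preceding the statement, obtained by feeding Formula (\ref{formula mot volume}) into Definition \ref{eq mot vol2} through the cover argument (so that no global gauge form on $X_\eta$ is needed), gives
\[
\mathcal{S}_{X_\infty}=\LL^{-m}\sum_{\emptyset\neq J\subset I}(1-\LL)^{|J|-1}[\widetilde{E}^o_J]\in\mathcal{M}^{\hat{\mu}}_{X_0},
\]
both sums running over the same index set with identical summands. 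Multiplying the second identity by $\LL^m$ then yields $\LL^m\mathcal{S}_{X_\infty}=\mathcal{S}_f$, which is the claim.

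The only genuine content, and hence the main (mild) obstacle, lies in the compatibility step: verifying that an algebraic embedded resolution of $f$ completes to a formal embedded resolution of $X_\infty$ carrying the same numerical and geometric data, and that the two a priori different constructions of $\widetilde{E}^o_J$ — the formal one of Definition \ref{def tildeE} and the algebraic one of Denef and Loeser — agree as $\hat\mu$-equivariant $X_0$-varieties. This is exactly what Remark \ref{tildee dl} records, so once that remark is invoked the remainder of the argument is a direct substitution requiring no further computation.
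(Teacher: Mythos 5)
Your proposal is correct and follows essentially the same route as the paper, which derives the proposition directly by comparing Formula (\ref{formula sf}) for $\mathcal{S}_f$ with the expression $\mathcal{S}_{X_\infty}=\LL^{-m}\sum_{\emptyset\neq J\subset I}(1-\LL)^{|J|-1}[\widetilde{E}_J^{o}]$ obtained from Formula (\ref{formula mot volume}) and the cover computation preceding the statement. Your explicit verification that an algebraic embedded resolution of $f$ completes to a formal embedded resolution of $X_\infty$ with matching data, via Remark \ref{tildee dl}, is the compatibility the paper leaves implicit.
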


\noindent
As done in \cite[Section 9]{abi2} with $\mathcal{S}_f/\hat{\mu}$, we can now study
the quotient $\mathcal{S}_{X_\infty}/\hat{\mu}$.
Using Formula (\ref{formula mot volume}) we 
we can in particular deduce the following result
with the same proof as in \cite[Proposition 9.5]{abi2}.

\begin{cor}\label{special fiber modulo l}
 Let $X_\infty$ be a $sftf$ formal scheme of relative dimension $m$ over $R$
 with smooth generic fiber.
Then the class of 
$ X_0'$ modulo $\mathbb{L}$ in $\mathcal{M}_{X_0}$ does not depend on the choice of 
an embedded resolution $h:X_\infty'\to X_\infty$.
\end{cor}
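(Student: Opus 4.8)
The plan is to exploit the fact that the equivariant motivic volume $\mathcal{S}_{X_\infty}$ is, by its very Definition \ref{eq mot vol2}, an \emph{intrinsic} invariant of $X_\infty$: it is built from the equivariant volume Poincar\'e series, which is defined through equivariant motivic integration on weak N\'eron models and makes no reference whatsoever to an embedded resolution. A choice of embedded resolution enters only through Formula (\ref{formula mot volume}), which \emph{computes} this intrinsic element in terms of the strata $\widetilde{E}_J^o$. The idea is therefore to isolate the class of $X_0'$ modulo $\mathbb{L}$ inside $\mathcal{S}_{X_\infty}$ by a purely formal manipulation, so that resolution-independence of $\mathcal{S}_{X_\infty}$ forces resolution-independence of the reduction of $[X_0']$.

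First I would apply the quotient-by-$\hat{\mu}$ morphism $q\colon\mathcal{M}^{\hat{\mu}}_{X_0}\to\mathcal{M}_{X_0}$ of \cite{abi2} to $\mathcal{S}_{X_\infty}$. This morphism is linear over the classes carrying the trivial $\hat{\mu}$-action (in particular over $\mathbb{Z}[\mathbb{L},\mathbb{L}^{-1}]$, since $\mathbb{L}$ has trivial action) and sends $[\widetilde{E}_J^o]$ to $[\widetilde{E}_J^o/\hat{\mu}]=[E_J^o]$. Applying it to Formula (\ref{formula mot volume}) gives
\[
q(\mathcal{S}_{X_\infty})=\mathbb{L}^{-m}\hspace{-5pt}\sum_{\emptyset\neq J\subset I}\hspace{-5pt}(1-\mathbb{L})^{|J|-1}[E_J^{o}]\in\mathcal{M}_{X_0},
\]
which is again intrinsic. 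Clearing the factor $\mathbb{L}^{-m}$, the element $\Phi:=\mathbb{L}^{m}q(\mathcal{S}_{X_\infty})=\sum_{\emptyset\neq J}(1-\mathbb{L})^{|J|-1}[E_J^o]$ is an intrinsic class lying in the image of $K_0(\Var_{X_0})$. I would then reduce modulo $\mathbb{L}$: since $(1-\mathbb{L})^{|J|-1}\equiv 1\pmod{\mathbb{L}}$ for every nonempty $J$, and since the locally closed strata $E_J^o$ partition the reduced special fiber $\bigcup_{i\in I}E_i$, so that $[X_0']=\sum_{\emptyset\neq J}[E_J^o]$ in $K_0(\Var_{X_0})$, one obtains $\Phi\equiv[X_0']\pmod{\mathbb{L}}$. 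As $\Phi$ is intrinsic, the class of $X_0'$ modulo $\mathbb{L}$ cannot depend on the resolution.

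The hard part, and the place where I would invoke \cite[Proposition 9.5]{abi2} verbatim, is justifying that this reduction modulo $\mathbb{L}$ is genuinely well defined on the class at hand: the intrinsic element $\Phi$ lives a priori in the localisation $\mathcal{M}_{X_0}$, and two resolutions produce representatives $\Phi_1,\Phi_2\in K_0(\Var_{X_0})$ with equal image in $\mathcal{M}_{X_0}$, so one must rule out that their difference is lost to $\mathbb{L}$-power-torsion before concluding $\Phi_1\equiv\Phi_2$ in the ``modulo $\mathbb{L}$'' quotient. This is exactly the bookkeeping carried out in \cite{abi2} for $\mathcal{S}_f/\hat{\mu}$; since $q(\mathcal{S}_{X_\infty})$ differs from that quotient only by the intrinsic factor $\mathbb{L}^{m}$ (indeed $\mathcal{S}_f=\mathbb{L}^m\mathcal{S}_{X_\infty}$ when $X_\infty$ arises from a map $f$), the same argument transfers directly. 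The remaining verifications — that $q$ fixes $\mathbb{L}$ and sends $[\widetilde{E}_J^o]$ to $[E_J^o]$, and the inclusion–exclusion identity $[X_0']=\sum_{\emptyset\neq J}[E_J^o]$ — are routine and I would dispatch them quickly.
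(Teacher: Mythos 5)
Your proposal is correct and follows essentially the same route as the paper: both exploit that $\mathcal{S}_{X_\infty}$ is intrinsic, pass to the quotient $\mathcal{S}_{X_\infty}/\hat{\mu}$, use Formula (\ref{formula mot volume}) together with $(1-\mathbb{L})^{|J|-1}\equiv 1 \pmod{\mathbb{L}}$ and the stratification $[X_0']=\sum_{\emptyset\neq J}[E_J^o]$, and delegate the delicate ``reduction modulo $\mathbb{L}$ in the localization'' bookkeeping to \cite[Proposition 9.5]{abi2}. The paper's own proof is exactly this, stated more tersely as ``with the same proof as in \cite[Proposition 9.5]{abi2}''.
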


\noindent
For a discussion of this result we refer to \cite[Section 9]{abi2}.

\medskip
\noindent
Finally remark that
modulo $\mathbb{L}-1$ we can recover $\mathcal{S}_f$ also from the equivariant
Serre Poincar\'e series.
This follows from Theorem \ref{explicit serre} and Formula (\ref{formula sf}).
More concrete, we have the following proposition:

\begin{prop}
Let $X$ be a smooth, irreducible variety over $k$, let ${f: X\to \mathbb{A}_k^1}$ be a dominant morphism,
and let $X_\infty$ be the formal completion of $X$ along ${X_0=f^{-1}(0)}$.
Then
 the limit
 of $-S(X_\infty;T) \mathbb{L}^m$ for $T\to \infty$ agrees with $\mathcal{S}_f$ in ${K_0^{\hat{\mu}}(\Var_{X_0})/(\mathbb{L}-1)}$.
\end{prop}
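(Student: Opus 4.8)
The plan is to read both sides of the claimed identity off the explicit formulas already established and to observe that they coincide after reduction modulo $\mathbb{L}-1$. First I would fix an embedded resolution $h\colon X'\to X$ of $f$, so that the completion $X_\infty'\to X_\infty$ along the special fiber is an embedded resolution of $X_\infty$ with $X_0'=\sum_{i\in I}N_iE_i$ a simple normal crossing divisor; the covers $\widetilde{E}^o_i$ and $\widetilde{E}^o_J$ of Definition \ref{def tildeE} agree with those of Denef and Loeser by Remark \ref{tildee dl}. The key organizational point is to use the \emph{same} resolution to compute both $S(X_\infty;T)$ and $\mathcal{S}_f$, so that the classes $[\widetilde{E}^o_i]$ appearing on the two sides are literally identical.

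Next I would compute the formal limit of the Serre Poincar\'e series. By Theorem \ref{explicit serre},
\[
S(X_\infty;T)=\sum_{i\in I}[\widetilde{E}^o_i]\frac{T^{N_i}}{1-T^{N_i}}\in K^{\hat{\mu}}_0(\Var_{X_0})/(\LL-1)\Pol T \Por,
\]
which is a rational function of the shape to which the formal limit $T\to\infty$ applies, exactly as for $Z(f;T)$ and $S(X_\infty,\omega;T)$. Since $\frac{T^{N}}{1-T^{N}}=\frac{-1}{1-T^{-N}}$ has formal limit $-1$, I obtain $\lim_{T\to\infty}S(X_\infty;T)=-\sum_{i\in I}[\widetilde{E}^o_i]$, whence
\[
\lim_{T\to\infty}\bigl(-S(X_\infty;T)\,\mathbb{L}^m\bigr)=\mathbb{L}^m\sum_{i\in I}[\widetilde{E}^o_i]=\sum_{i\in I}[\widetilde{E}^o_i]\in K^{\hat{\mu}}_0(\Var_{X_0})/(\LL-1),
\]
using that $\mathbb{L}\equiv 1$, and hence $\mathbb{L}^m\equiv 1$, modulo $\mathbb{L}-1$.

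Then I would reduce $\mathcal{S}_f$ modulo $\mathbb{L}-1$. By Formula (\ref{formula sf}),
\[
\mathcal{S}_f=\sum_{\emptyset\neq J\subset I}(1-\LL)^{|J|-1}[\widetilde{E}_J^{o}]\in\mathcal{M}^{\hat{\mu}}_{X_0}.
\]
Under the projection $\mathcal{M}^{\hat{\mu}}_{X_0}\to\mathcal{M}^{\hat{\mu}}_{X_0}/(\LL-1)\cong K^{\hat{\mu}}_0(\Var_{X_0})/(\LL-1)$ of Remark \ref{rem specializing}, every summand with $|J|>1$ carries the factor $(1-\mathbb{L})^{|J|-1}$, which vanishes because $1-\mathbb{L}\equiv 0$; only the strata with $|J|=1$ survive, giving $\mathcal{S}_f\equiv\sum_{i\in I}[\widetilde{E}^o_i]$. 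Comparing this with the previous display shows that both sides equal $\sum_{i\in I}[\widetilde{E}^o_i]$ in $K^{\hat{\mu}}_0(\Var_{X_0})/(\LL-1)$, which is the assertion.

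There is no genuine obstacle here: the content is entirely packaged into Theorem \ref{explicit serre} and Formula (\ref{formula sf}). The only points that require care are the term-by-term evaluation of the formal limit, which is legitimate by the rationality of $S(X_\infty;T)$ noted at the end of Section \ref{computation}, and the collapse of the alternating sum for $\mathcal{S}_f$ onto its codimension-one strata modulo $\mathbb{L}-1$. I would flag the bookkeeping with the embedded resolution as the one thing to state carefully, since the whole argument hinges on the two computations being carried out with respect to the same $X_0'=\sum_{i\in I}N_iE_i$.
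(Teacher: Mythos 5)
Your proof is correct and follows exactly the route the paper takes: the paper derives this proposition directly from Theorem \ref{explicit serre} and Formula (\ref{formula sf}), with the formal limit $\frac{T^{N_i}}{1-T^{N_i}}\to -1$ and the collapse of the $|J|>1$ strata modulo $\mathbb{L}-1$ being exactly the two observations you spell out. Your care in using the same embedded resolution on both sides is the right bookkeeping point, and nothing further is needed.
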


\bibliographystyle{babalpha}

	\bibliography{refgaction}

\end{document}